\theoremstyle{plain}
\newtheorem{thm}{Theorem}[section]
\newtheorem{lemma}[thm]{Lemma}
\newtheorem{defin}[thm]{Defintion}
\newtheorem{prop}[thm]{Proposition}
\theoremstyle{definition}
\newtheorem{cor}[thm]{Corollary}
\newtheorem{rem}[thm]{Remark}
\newtheorem{ques}{Question}
\newcommand{\future}[1]{{}}  
\newcommand{\bb}{\mathbb}
\newcommand{\R}{\bb R}
\newcommand{\mb}[1]{\marginpar{\color{blue}\tiny #1 --mb}}
\newcommand{\jc}[1]{\marginpar{\color{brown}\tiny #1 --jc}}
\newcommand{\sh}[1]{\marginpar{\color{red}\tiny #1 --sh}}
\begin{document}
\title{Ergodic Measures in Strata}
\author{Mladen Bestvina}
\author{Jon Chaika}
\author{Sebastian Hensel}
\maketitle

\begin{abstract}
We determine the number of ergodic measures in each stratum of
geodesic laminations on a surface.
\end{abstract}

\section{Introduction}

A fundamental question in dynamics is to understand the ergodic
measures of a system. A more specific instance of this question is to
bound the possible numbers of such ergodic measures in systems of a
given type.  In this article we study this question in the context of
transverse measures on geodesic laminations on hyperbolic surfaces.

\smallskip This problem has already been extensively studied -- both
for laminations (or, equivalently, singular foliations), and in the very
related contexts of translation flows on translation surfaces, smooth
flows on surfaces and interval exchange transformations. In fact,
results in one of these settings often trivially imply results in
related settings and we now present known results in the language of
minimal and filling laminations (though these results may originally
have been stated for one of the other settings).

For minimal and filling orientable laminations on a closed
surface $S$, Katok used the symplectic structure to show that the
number of ergodic measures is at most the genus $g$ of the surface;
see \cite{Katok} and also \cite[Theorem 0.5]{VIET}. Sataev then proved
that this bound is in fact attained \cite{Sat}. Levitt
\cite[Th\'eor\`eme IV.2.1]{levitt} showed that on a surface of genus
$g$ a filling (possibly non-orientable) lamination can have at most $3g-3$ projectively distinct
ergodic measures. Levitt's method was to pass to a branched double
cover where the lamination is orientable and apply Katok's bound.
Later, Lenzhen and Masur \cite[Theorem C]{LM} simultaneosly approximated all ergodic measures for possibly
non-orientable laminations by multicurves and this again proves that
$3g-3$ is an upper bound.
Gabai showed that this bound is attained
\cite[Theorem 9.1]{Gab}.

Laminations on surfaces can be organized into \emph{strata}, according
to whether the lamination is orientable or not, and the
combinatorics of their complementary components. There is an obvious
Euler characteristic constraint connecting the genus of the surface to
the possible combinatorics of the complementary components, and except
for some low-complexity cases, all data satisfying the Euler
characteristic constraint define non-empty strata. See
Section~\ref{sec:curves} and \cite{MS} for details. 

\smallskip Fickenscher showed that in every stratum of orientable
laminations, there is a minimal and filling lamination with $g$
ergodic measures \cite[Section 4]{Fic}. That is, the maximal number of
ergodic measures is constant over all orientable strata of the same
genus.

The main result of this paper shows that different
behaviour happens for general strata: here, different strata of
laminations on the same surface can have different maximal numbers of
ergodic measures, and we exactly classify these numbers from the
combinatorics. This is made precise in the following theorem.

\begin{thm}\label{thm:upper-bound-intro}
	Let $\lambda$ be a filling, measured lamination, and denote by 
	$n_\mathrm{odd}$ the number of complementary regions with an odd number of sides.
	The number of ergodic projective measures on $\lambda$ is at
        most $g+\frac 12 n_{odd}-1$ if $\lambda$ is nonorientable and
        $g+\frac 12 n_{odd} = g$ if $\lambda$ is orientable.

        Moreover, these bounds are optimal: in each nonempty stratum
        there is a lamination realising the above bounds.
\end{thm}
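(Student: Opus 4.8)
The plan is to prove the upper bound by a homological argument --- in the spirit of Katok and Levitt, but using a cover of $S$ adapted to the stratum --- and to prove optimality by an explicit inductive train-track construction. For the upper bound we first reduce to $\lambda$ minimal and filling: a filling measured lamination is carried by its minimal sublaminations, and passing to a filling minimal piece does not increase $g+\tfrac12 n_{\mathrm{odd}}$. Let $\mathcal{T}(\lambda)$ be the (finite-dimensional) cone of transverse measures on $\lambda$; uniqueness of the ergodic decomposition makes the ergodic classes linearly independent and spanning, so the number of ergodic projective measures equals $\dim\mathcal{T}(\lambda)$. If $\lambda$ is orientable, the period map $\mathcal{T}(\lambda)\to H_1(S;\R)$ is injective: a null-homologous $\mu\in\mathcal{T}(\lambda)$ is the horizontal measured foliation of an abelian differential $\omega$ on $S$ with $[\mathrm{Im}\,\omega]=0$, and since $\mathrm{Im}\,\omega$ is harmonic this forces $\omega=0$. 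Two transverse measures on the \emph{same} lamination have vanishing geometric, hence vanishing algebraic, intersection number, so the image is isotropic for the intersection pairing on $H_1(S;\R)$ and $\dim\mathcal{T}(\lambda)\le g$.

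If $\lambda$ is nonorientable, pass to the orientation double cover $\pi\colon\hat S\to S$ of $\lambda$. As in Section~\ref{sec:curves}, this cover is branched exactly over the complementary regions with an odd number of sides, so Riemann--Hurwitz gives $\hat g=2g+\tfrac12 n_{\mathrm{odd}}-1$. The lift $\hat\lambda$ is orientable; the deck involution $\sigma$ preserves the orientation of $\hat S$ but reverses the leafwise orientation of $\hat\lambda$, and pulling back identifies $\mathcal{T}(\lambda)$ with the $\sigma$-invariant transverse measures on $\hat\lambda$, whose period classes land in the $(-1)$-eigenspace $V_-$ of $\sigma_*$ on $H_1(\hat S;\R)$. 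Since $\hat S/\sigma=S$, rationally $V_+\cong H_1(S;\mathbb{Q})$, so $\dim V_-=2\hat g-2g=2g+n_{\mathrm{odd}}-2$; as $V_+\perp V_-$ and the intersection form on $H_1(\hat S;\R)$ is nondegenerate, it restricts to a symplectic form on $V_-$, and the (isotropic) image of $\mathcal{T}(\lambda)$ there has dimension $\le\tfrac12\dim V_-=g+\tfrac12 n_{\mathrm{odd}}-1$.

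For optimality it suffices, by the bound just proved, to produce in each nonempty stratum a lamination with \emph{at least} $k:=g+\tfrac12 n_{\mathrm{odd}}-1$ (respectively $g$) ergodic measures. We build $\lambda=\bigcap_i N(\tau_i)$ from an infinite splitting sequence $\tau_0\succ\tau_1\succ\cdots$, where $\tau_0$ is a train track realizing the prescribed stratum (same complementary regions, same orientability type). The sequence is chosen to be recurrent enough that $\lambda$ is minimal and still fills, and ``ambiguous in $k$ directions'': there are $k$ disjoint sub-configurations in $\tau_0$, and the splittings alternate long blocks favoring each in turn, with the stage-dependent decay, growth, and separation parameters (the quantities \lowdecay, \updecay, \growth, \sep) arranged so that the nested weight polytopes converge to a $k$-dimensional simplex whose vertices are mutually singular ergodic transverse measures. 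A bookkeeping argument checks that no splitting merges complementary regions or changes their side-parities (and that orientability is preserved when required), so $\lambda$ lies in the prescribed stratum; combined with the upper bound, it then has exactly $k$ (respectively $g$) ergodic measures.

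The main obstacle is the optimality half: one must keep the splitting sequence inside the prescribed stratum --- which fixes the complementary side-counts and the orientability type --- while simultaneously forcing the weight polytopes to stabilize to a simplex of the \emph{exact} dimension $k$, the delicate quantitative point governed by the decay and growth estimates. Earlier constructions (Sataev, Gabai, Fickenscher) realized the extremal count for a whole genus or over orientable strata, where the target is constant; here the target $g+\tfrac12 n_{\mathrm{odd}}-1$ varies with the stratum, and attaining it forces the construction to exploit \emph{pairs} of odd-sided regions, each contributing --- via the branched cover above --- half of an additional ergodic measure. On the upper-bound side the only subtleties are the injectivity of the period map and the identification of the correct adapted cover; granted these, the dimension count is immediate.
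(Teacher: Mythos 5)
Your upper-bound half follows the ``pass to the orientation double cover and apply Katok's homological argument'' route that the paper explicitly mentions as an alternative to its own (the paper instead works directly with the Thurston form $\Omega$ on the weight space of a carrying track and Bonahon--Wong's computation of $\ker\Omega$). The Riemann--Hurwitz count, the identification of $V_-$ and its dimension $2g+n_{\mathrm{odd}}-2$, and the isotropy of the image all check out. What is \emph{not} established is the injectivity of the period map on $\mathcal T(\lambda)$ --- which is the heart of the matter, corresponding to Theorem~\ref{thm:dim-ker} in the paper. Your one-line argument (``$[\mathrm{Im}\,\omega]=0$ and $\mathrm{Im}\,\omega$ harmonic forces $\omega=0$'') only shows that a single nonzero \emph{positive} transverse measure cannot have zero period; what is needed is that the linear span of $\mathcal T(\lambda)$ meets the kernel of the period map trivially, i.e.\ that a nontrivial \emph{signed} combination $\sum c_i\mu_i$ of the ergodic measures cannot be null-homologous. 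Such a combination is not the imaginary part of a holomorphic one-form with respect to any obviously fixed complex structure, so the harmonicity argument does not apply to it directly. The paper handles this by showing (via Lemma~\ref{negative} and the splitting-sequence argument in Theorem~\ref{thm:dim-ker}) that every nonzero kernel vector assigns a negative weight to some branch in every track along a splitting sequence, while the weights coming from $\mathcal T(\lambda)$ decay --- this is a genuinely different and more careful argument than yours, and the gap you leave is exactly where the real content lies.

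The optimality half of your proposal has a more serious gap. You say one should take a train track $\tau_0$ realizing the stratum with ``$k$ disjoint sub-configurations'' and run splitting blocks that alternately favor each; but the existence of such initial data with the right complementary-region combinatorics and orientability type is precisely the hard combinatorial problem the paper solves. The paper's Section~\ref{sec:curves} introduces \emph{optimal triangular configurations} of curve pairs $(a_i,b_i)$ (upper-triangular intersection pattern, $I(a_i,b_i)>0$) and proves Theorem~\ref{thm:existence-triangular-patterns} by a substantial induction: genus-$0$ building blocks, a surgery-at-a-crossing move, handle-adding and region-joining moves, and hand-checked exceptional low-genus cases (including the impossible signatures $(1,3)$, $(3,5)$, $(6)^-$, $\emptyset^-$, which your proposal never confronts). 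Only then does Section~\ref{sec:train-tracks} convert such a configuration into a bigon track, twist by carefully chosen powers $D_{a_i}^{\alpha_i}D_{b_j}^{-\beta_j}$ to obtain $\epsilon$-matrices, and invoke a result of \cite{EMS} to get a lamination with $k$ ergodic measures. Your ``bookkeeping argument checks that no splitting merges complementary regions'' elides all of this; and the decay/growth/separation parameters you cite are unused stubs in the preamble, not a worked argument. In short, the analytic step (choosing twist powers so the weight polytopes nest down to a $k$-simplex) is standard and your sketch is fine there, but the combinatorial input --- constructing the configurations in each stratum, and excluding the genuinely non-realizable signatures --- is the new content, and your proposal assumes it rather than proving it.
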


\subsubsection*{Outline of proof}

The proofs of the upper and lower bounds in
Theorem~\ref{thm:upper-bound-intro} are of very different character.

The upper bound morally already follows from Katok's argument. Katok's proof
is in the context of transversely orientable foliations but one can
pass to the oriented branched double cover (as in \cite{levitt}) or consider the Thurston
symplectic form (as we do here), to establish the analogue in the general lamination
case (see Theorem~\ref{thm:dim-ker}).

For the lower bound, the strategy is to first convert the construction
of a lamination with a given number of ergodic measures in a specific stratum
into a topological-combinatorial problem. In fact, the desired
lamination can be constructed from a \emph{triangular configuration} of
curves with a suitable intersection pattern using fairly standard
train track techniques inspired by constructions of minimal nonergodic
laminations; the details are worked out in
Section~\ref{sec:train-tracks}.

The main new technical work lies in constructing the necessary
triangular configurations of curves. Here, we take an inductive approach similar
to the constructions in \cite{MS}. Our construction has certain
``building blocks'' -- explicit configurations of curves on
low-complexity surfaces -- and a number of ``moves'' which allow to
combine these basic blocks in an inductive fashion. Executing this
strategy occupies Section~\ref{sec:curves}.

Finally, in Section~\ref{sec:conf-from-fol} we show that the
appearance of triangular configurations is not an artifact of our
proof, and in fact any minimal lamination has such a corresponding
configuration. The methods of proof are similar to Katok's, and in
particular our approach reproves and strengthens a result of
Lenzhen-Masur \cite{LM}. We remark that, unlike in Lenzhen-Masur's proof we do
not use Teichm\"uller theory.

\subsubsection*{Open questions}

\begin{ques} Let $S$ be a surface and $\tau\subset S$ a 
  train track with all complementary regions polygons or
  once-punctured polygons that fully carries laminations. What is the
  maximal number of mutually singular ergodic measures on a lamination
  fully carried by $\tau$?  The upper bound from Theorem
  \ref{thm:upper-bound-intro} applies, but we don't know if it is
  realized.
\end{ques}

\begin{comment}
\begin{ques} For convenience, we phrase the next theorem in the
  context of translation surfaces $M$. Let $F^t$ denote the
  translation flow. The Birkhoff ergodic theorem implies that for
  every ergodic measure $\mu$ there exists $p_\mu\in M$ so that for every continuous function $f$ we have 
$$\frac 1 T \int_0^Tf(F^tp_{\mu})dt=\int f d\mu.$$ Such a point
  $p_\mu$ is called a \emph{generic point} for $\mu$ and we call a
  (not necessarily ergodic) measure $\mu$ with such a point a \emph{generic measure}. 
What is the maximal number of fully supported generic measures in each stratum? Even
in the orientable case it is not known whether it can be more than the
genus. A natural question is whether or not in every stratum the
maximal number of fully supported generic measures is the same as the maximal number
of fully supported ergodic measures.\mb{maybe add references, examples of generic
  nonergodic measures}\jc{Added the word fully supported in various places. In strata of translation surfaces the bound for periodic ergodic measures is different.}
\end{ques}
\end{comment}

\begin{ques}
  Let $\Lambda$ be a filling lamination on a hyperbolic surface $S$
  and let $\mu$ be a transverse measure on $\Lambda$. Choose a
  transverse interval $I$. We say that $\mu$ is {\it generic} if there
  is a half-leaf $\ell$ such that if $x_1,x_2,\cdots$ are the
  intersection points of $\ell$ with $I$ in order they occur on
  $\ell$, then the counting measure on the first $n$ points converges
  projectively to the measure on $I$ induced by $\mu$.  For example,
  ergodic measures are generic by Birkhoff's ergodic theorem, see Step
  1 in Section 5. There exist generic measures that are not ergodic,
  \cite{CM}. If $\Lambda$ is orientable can it admit more than $g$
  projectively distinct generic measures? Damron-Fickenscher and
  Masur have upper bounds on the number of generic measures including
  showing that the upper bound is $g$ in the minimal, orientable
  stratum \cite{DF}, \cite{MasGen}. Can the number of projective
  classes of generic measures exceed the number in Theorem
  \ref{thm:upper-bound-intro} in a given stratum?
\end{ques}

\begin{ques}
The analogue of Theorem \ref{thm:upper-bound-intro} is open for $GL(2,\mathbb{R})$-orbit closures. For concreteness we restrict to the orientable case. Let $\mathcal{O}$ be a $GL(2,\mathbb{R})$-orbit closure of translation surfaces and $M \in \mathcal{O}$ be a translation surface with minimal vertical flow. Each ergodic measure gives a cohomology class and the tangent space of $\mathcal{O}$ is given by relative cohomology. What is the maximum, over all surfaces in $\mathcal{O}$ with minimal vertical flow, of the number of ergodic measures for the vertical flow whose cohomology class is in the tangent space to $\mathcal{O}$? By Katok's argument, an upper bound is the \emph{rank} of $\mathcal{O}$ in the sense of Wright \cite{CylDef}, but it is unclear if this is attained.  Note, the condition that the cohomology class given by the ergodic measure is in the tangent space is important; for every $g$ there is a rank 1 orbit closure that has a translation surface with a minimal vertical flow that has $g$ ergodic measures, for example by modifying \cite{Vskew}.  %
\end{ques}

\section{Upper Bounds}
In this section we prove the following upper bound on the number of
ergodic measures a lamination can support.
\begin{thm}\label{thm:upper-bound}
  Let $\tau$ be a filling recurrent 3-valent track in $\Sigma$ with
  $n_\mathrm{odd}$ odd complementary regions, and let $\lambda$ be a
  geodesic lamination tightly carried by $\tau$. The number of ergodic
  measures on $\lambda$, up to scale, is at most
  $g+\frac 12 n_{odd}-1$ if $\tau$ is nonorientable and
  $g+\frac 12 n_{odd} = g$ if $\tau$ is orientable. 
\end{thm}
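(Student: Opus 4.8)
The plan is to realize the space of transverse measures on $\lambda$ as a subspace of a finite-dimensional vector space attached to the carrying train track $\tau$, and then to bound the dimension of that subspace using an antisymmetric (Thurston-type) bilinear form, mimicking Katok's symplectic argument in the orientable case.

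\textbf{Step 1: Weight spaces and the measure cone.} Since $\lambda$ is tightly carried by $\tau$, every transverse measure on $\lambda$ determines, and is determined by, a system of nonnegative weights on the branches of $\tau$ satisfying the switch conditions. Let $W(\tau)\subseteq\R^{b}$ (where $b$ is the number of branches) be the linear subspace cut out by the switch equations; the transverse measures on $\lambda$ form a cone $C\subseteq W(\tau)$, and the distinct ergodic projective measures are exactly the extreme rays of $C$. Because distinct ergodic measures are mutually singular, I would argue (this is the standard fact, but I would spell it out) that a collection of $N$ distinct ergodic projective measures is linearly independent in $W(\tau)$; hence the number of ergodic measures is at most $\dim W(\tau)$. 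In fact more is true: the span of the ergodic measures lies in a subspace on which a certain bilinear form degenerates, so the real target is to bound the dimension of that subspace.

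\textbf{Step 2: The Thurston antisymmetric form.} On $W(\tau)$ there is a natural antisymmetric bilinear form $\omega$ — the Thurston intersection form — defined combinatorially from $\tau$: at each switch one pairs incoming against outgoing weights with signs, and summing over switches gives a well-defined antisymmetric form on the switch subspace. The key geometric input is that for two transverse measures supported on the \emph{same} lamination $\lambda$, the form $\omega$ vanishes: $\omega(v,w)=0$ whenever $v,w$ both come from transverse measures on $\lambda$ (two measures on the same lamination have zero algebraic intersection number, since the lamination has no transverse self-intersections). Therefore $\mathrm{span}(C)$ is an isotropic subspace for $\omega$, and $\dim\mathrm{span}(C)\le \dim W(\tau) - \mathrm{rank}(\omega)/2$ when $\omega$ is nondegenerate on a complement, or more precisely $\dim\mathrm{span}(C)\le \frac12(\dim W(\tau) + \dim\ker\omega)$.

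\textbf{Step 3: Computing $\dim W(\tau)$ and $\dim\ker\omega$ from the combinatorics.} This is where the Euler-characteristic bookkeeping enters and where I expect the real work to be. Using that $\tau$ is $3$-valent, filling, and recurrent, one computes $\dim W(\tau)$ in terms of the genus $g$ and the number and parities of complementary regions: $\dim W(\tau) = 2g - 1 + (\text{number of complementary regions}) $ in the recurrent filling case (from $b - s$ where $b,s$ are branch and switch counts, related by $\chi$), adjusted appropriately. The kernel of the Thurston form is identified with a homology-type group supported on the complementary regions; its dimension is governed precisely by $n_{\mathrm{odd}}$ — each odd-sided complementary region contributes, because one cannot consistently $2$-color / orient around an odd polygon, whereas even regions and orientable configurations do not. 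Assembling these two computations and plugging into the isotropic-subspace bound from Step 2 should yield exactly $g + \tfrac12 n_{\mathrm{odd}} - 1$ in the nonorientable case, and $g$ in the orientable case (where $n_{\mathrm{odd}} = 0$ and $\omega$ is symplectic, recovering Katok). I would most likely organize Step 3 by first treating the transversely orientable (double cover) model to fix conventions, then descending, and the main obstacle is getting the parity contribution of $\ker\omega$ exactly right rather than off by a factor of two — this is presumably handled in the cited Theorem~\ref{thm:dim-ker}, which I would invoke for the kernel dimension and then combine with the linear-independence and isotropy facts above.
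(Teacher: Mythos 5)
Your overall outline matches the paper's: identify measures with points in the switch-equation solution space $W(\tau)$, observe that the span (or tangent space at an interior point) of the measure cone is isotropic for the Thurston form, and then do Euler-characteristic bookkeeping. But the crucial inequality in your Step 2 is too weak, and without fixing it the numbers do not come out right.

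Being isotropic for $\Omega$ only gives the bound
\[
\dim\operatorname{span}(C)\;\le\;\tfrac12\bigl(\dim W+\dim\ker\Omega\bigr),
\]
which is exactly what you wrote. But the theorem needs the \emph{stronger} estimate
\[
\dim\operatorname{span}(C)\;\le\;\tfrac12\bigl(\dim W-\dim\ker\Omega\bigr),
\]
and the two differ by $\dim\ker\Omega = n_{\mathrm{even}}$ (nonorientable case). Concretely, your bound would give $g-1+n_{\mathrm{even}}+\tfrac12 n_{\mathrm{odd}}$, not $g-1+\tfrac12 n_{\mathrm{odd}}$. To drop the $n_{\mathrm{even}}$ you must show that the measure cone meets $\ker\Omega$ only in $0$, so that it injects into the symplectic quotient $W/\ker\Omega$ and is a genuine Lagrangian there. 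This is where the real content of the paper's argument lies: Lemma~\ref{negative} shows every nonzero element of $\ker\Omega$ assigns a negative weight to some branch, and the proof of Theorem~\ref{thm:dim-ker} then runs a splitting sequence from $\tau$ towards $\lambda$ to derive a contradiction if $w_0+v\in W(\lambda)$ for some nonzero $v\in\ker\Omega$ (the weights coming from $w_0$ tend to $0$ while $v$ keeps a bounded-below negative entry). Your proposal never addresses why the measures avoid the kernel, and that is the step that actually requires the ``tightly carried'' hypothesis and the recurrence of $\tau$.

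One further miscue: you say the kernel of $\Omega$ is ``governed precisely by $n_{\mathrm{odd}}$ --- each odd-sided complementary region contributes.'' This is backwards. By Theorem~\ref{thm:bw} (Bonahon--Wong), the kernel is spanned by the alternating $\pm 1$ weight vectors around the \emph{even}-sided regions, so $\dim\ker\Omega=n_{\mathrm{even}}$ in the nonorientable case and $n_{\mathrm{even}}-1$ in the orientable case; the odd regions are precisely the ones that admit no such alternating assignment and hence do not contribute. The $n_{\mathrm{even}}$ then cancels against the $n_{\mathrm{even}}$ hiding inside $\dim W=-\chi(\tau)=2g-2+n_{\mathrm{even}}+n_{\mathrm{odd}}$ once you have the correct Lagrangian bound, which is why the final answer sees only $n_{\mathrm{odd}}$.
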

Since every lamination is tightly carried by a filling recurrent
3-valent track whose complementary components and orientability
exactly correspond to those of the lamination, we obtain one half of
our main theorem:
\begin{cor}
  Suppose $\lambda$ is a measured geodesic lamination, and let
  $n_\mathrm{odd}$ be the number of odd complementary regions of
  $\lambda$. Then the number of ergodic measures on $\lambda$ is at
  most $g+\frac 12 n_{odd}-1$ if $\lambda$ is nonorientable and
  $g+\frac 12 n_{odd}=g$ if $\lambda$ is orientable.
\end{cor}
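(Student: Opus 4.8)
The plan is to derive the corollary directly from Theorem~\ref{thm:upper-bound}, using the single remaining input that every measured geodesic lamination is tightly carried by a filling recurrent $3$-valent train track whose complementary regions and orientability type agree with its own. Granting this, the argument is immediate: let $\tau$ be such a track for $\lambda$. Since the complementary regions of $\tau$ are in sides-preserving bijection with those of $\lambda$, the integer $n_{\mathrm{odd}}$ computed from $\tau$ equals the one computed from $\lambda$; since $\tau$ is orientable precisely when $\lambda$ is, the dichotomy in the theorem matches the dichotomy in the corollary. Applying Theorem~\ref{thm:upper-bound} to the pair $(\tau,\lambda)$ then gives the asserted bound $g+\tfrac12 n_{\mathrm{odd}}-1$ in the nonorientable case and $g+\tfrac12 n_{\mathrm{odd}}=g$ in the orientable case.

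So the substance is the construction of $\tau$, which I would carry out by the standard "thicken and collapse" procedure. Take a small foliated tubular neighborhood $N$ of $\lambda$, foliated by transverse arcs, and collapse each arc to a point; the quotient is a train track $\tau_0$ that fully carries $\lambda$. Because $N$ can be taken arbitrarily thin and $\lambda$ is a lamination, the complementary regions of $\tau_0$ are in natural bijection with those of $\lambda$, each with the same number of ideal vertices, so $n_{\mathrm{odd}}$ is preserved; and a transverse orientation of $\lambda$ descends to one of $\tau_0$ and conversely, so $\tau_0$ is orientable iff $\lambda$ is. Since $\lambda$ is filling, every complementary region of $\tau_0$ is a polygon or once-punctured polygon, i.e.\ $\tau_0$ is filling. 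Since $\lambda$ carries a positive transverse measure, pushing it forward puts positive weights on all branches of $\tau_0$ (after discarding any branch $\lambda$ misses, which the construction avoids), so $\tau_0$ is recurrent. Finally, to make the track $3$-valent one splits any switch of valence $>3$ into a chain of trivalent switches; this move leaves the carried lamination, the complementary regions, recurrence, fillingness and orientability all unchanged, so it yields the desired $\tau$. That $\lambda$ is carried \emph{tightly} — in the sense required by Theorem~\ref{thm:upper-bound}, namely that no complementary region of $\tau$ properly contains a complementary region of $\lambda$ — is exactly what the thin-neighborhood collapse guarantees.

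The main (and essentially only) obstacle is this last bookkeeping: one must check simultaneously that the carrying track can be taken filling, recurrent, trivalent, and combinatorially faithful to the complementary regions of $\lambda$, and that the normalizing moves used to achieve trivalence neither create nor destroy complementary regions. This is routine train track technology, but it is the place where some care is needed, since a careless choice (for instance, passing to a \emph{maximal} carrying track) would alter the complementary regions and hence change $n_{\mathrm{odd}}$; the point is precisely to stop short of maximality and keep the combinatorics of the complement rigidly tied to that of $\lambda$.
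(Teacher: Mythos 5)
Your proposal is correct and takes essentially the same approach as the paper: the paper also deduces the corollary by observing that every (filling) measured geodesic lamination is tightly carried by a filling recurrent $3$-valent track whose complementary regions and orientability agree with those of $\lambda$, and then invoking Theorem~\ref{thm:upper-bound}. The paper simply asserts this carrying fact as standard; you fill in the routine thicken-and-collapse construction behind it (your paraphrase of ``tightly carried'' is not literally the paper's no-central-splits definition, but it expresses the same geometric content and is exactly what the thin-neighborhood collapse secures).
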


The rest of this section is concerned with the proof of
Theorem~\ref{thm:upper-bound}. Let $\tau$ be a filling recurrent
3-valent train track in $\Sigma$. We denote by $W=W(\tau)$ the vector
space of all real (including negative) solutions to the switch
equations.  Recall that its dimension is $-\chi(\tau)$ if $\tau$ is
nonorientable and it is $1-\chi(\tau)$ if $\tau$ is orientable.

It is equipped
with the Thurston intersection pairing
$$\Omega:W\times W\to \R$$
defined by
\begin{equation}\label{eq:kernel|}\Omega(\alpha,\beta)=\frac 12 \sum_{e\mbox{ right of
  }e'}(\alpha(e)\beta(e')-\beta(e)\alpha(e'))
  \end{equation} where the sum is over
the pairs of edges $e,e'$ incident at a vertex and forming an illegal
turn so that in the natural cyclic order $e$ is to the right of
$e'$. This is an alternating pairing and there is the following
theorem of Bonahon-Wong \cite{BW}. Let $n_{odd}$ and $n_{even}$ denote
the number of odd and even sided complementary components (they are
disks or once punctured disks) and let $g$ be the genus of $\Sigma$.

\begin{thm}[{\cite[Theorem~26]{BW}}]\label{thm:bw}
  The kernel of the form $\Omega$ has dimension
  \begin{itemize}
  \item $n_{even}$ if $\tau$ is nonorientable,
  \item $n_{even}-1$ if $\tau$ is orientable.
  \end{itemize}
  In both cases the kernel is spanned by the elements of $W$
  corresponding to each even sided region that alternate weights on
  the sides between
  $1$ and $-1$; in the nonorientable case this is a basis, and in the
  orientable case there is one redundancy (the sum with suitable signs
  is 0).
\end{thm}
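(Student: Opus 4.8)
The plan is to recognize $\Omega$ as an intersection form on homology and then read off the kernel from the topology of the complementary regions. \emph{The orientable case:} when $\tau$ is coherently orientable, orient each branch from its small end to its large end, so that the switch equations become cycle conditions and $W=Z_1(\tau;\R)=H_1(\tau;\R)$, consistent with $\dim W=1-\chi(\tau)$. Traversing the boundary of a complementary region and recording at each point whether its boundary orientation agrees with the branch orientation produces an indicator that is constant along each side and flips exactly at the cusps, so every region has an even number of sides; in particular $n_{odd}=0$. For a region $R$ the boundary cycle $[\partial R]\in H_1(\tau;\R)=W$ is precisely the vector $w_R$ of the statement, since the indicator alternates from side to side. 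Since $\tau$ fills, the inclusion induces a surjection $H_1(\tau;\R)\twoheadrightarrow H_1(\Sigma;\R)$, and it is the classical description of Thurston's form (obtainable by matching the contribution of each illegal turn in \eqref{eq:kernel|} to a local intersection number) that $\Omega$ is the pullback of the algebraic intersection form of $\Sigma$ along this surjection. Hence $\ker\Omega$ is the kernel of $H_1(\tau;\R)\to H_1(\Sigma;\R)$, which is spanned by the classes $[\partial R]=w_R$ with the single relation $\sum_R[\partial R]=0$ (each branch appears once from each of its two sides, with opposite boundary orientations); its dimension is $(1-\chi(\tau))-2g=n_{even}-1$.

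\emph{The nonorientable case:} pass to the orientation double cover $p\colon\widehat\Sigma\to\Sigma$ on which $\widehat\tau=p^{-1}(\tau)$ is coherently orientable, with deck involution $\iota$ that reverses the coherent orientation; this cover is unramified over $\tau$, but, being branched inside the odd disk regions, it sends each odd region of $\tau$ to one region with twice as many sides and each even region to two disjoint copies. Pulling back weightings identifies $W(\tau)$ with the space of $\iota$-invariant weightings on $\widehat\tau$, which — because $\iota$ reverses the coherent orientation — is exactly the $(-1)$-eigenspace $V_-$ of $\iota_*$ on $W(\widehat\tau)=H_1(\widehat\tau;\R)$ (its dimension is $(1-\chi(\widehat\tau))-(1-\chi(\tau))=-\chi(\tau)=\dim W(\tau)$, using that $\iota$ acts freely on $\widehat\tau$ and the $(+1)$-eigenspace is $H_1(\tau;\R)$ by transfer). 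As \eqref{eq:kernel|} is natural up to the factor $2$ from the two preimages of each illegal turn, $\ker\Omega_\tau=V_-\cap\ker\Omega_{\widehat\tau}$. By the orientable case, $\ker\Omega_{\widehat\tau}$ is spanned by the classes $w_{\widehat R}$ of the (even) regions of $\widehat\tau$ with the single relation $\rho=\sum_{\widehat R}[\partial\widehat R]=0$. For an even region $R$ with lifts $\widehat R_1,\widehat R_2$, the symmetrization $w_{\widehat R_1}+\iota_\#w_{\widehat R_1}$ lies in $V_-$ and descends to $\pm w_R$; for an odd region $R$, the single lift carries a half-rotation by $\iota$, which negates its alternating pattern (since $R$ has an odd number of sides), so $w_{\widehat R}$ lies in the $(+1)$-eigenspace and contributes nothing to $V_-$. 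Likewise $\rho$ is $\iota_*$-invariant, since $\iota$ preserves the orientation of $\widehat\Sigma$ and merely permutes its regions, so $\rho\in V_+$ and imposes no relation inside $V_-$. Therefore $\ker\Omega_\tau$ has $\{w_R:R\text{ even}\}$ as a basis, of dimension $n_{even}$.

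The parts I would isolate as short lemmas are the routine ones: that each $w_R$ solves the switch equations, that \eqref{eq:kernel|} computes the intersection pairing, and that $H_1(\tau;\R)\to H_1(\Sigma;\R)$ is onto for filling $\tau$. The real difficulty lies in the nonorientable case, and concretely in the bookkeeping across the branched cover: pinning down that $W(\tau)$ is the correct $\iota$-eigenspace when $\widehat\Sigma\to\Sigma$ ramifies inside the odd regions, and verifying the two sign assertions — that the odd-region classes and the single upstairs relation both land in $V_+$ rather than $V_-$. This is exactly what produces the discrepancy between the orientable answer $n_{even}-1$ and the nonorientable answer $n_{even}$, and it is the step I expect to require the most care.
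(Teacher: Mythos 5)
The paper offers no proof of this statement to compare against: Theorem~\ref{thm:bw} is quoted verbatim from Bonahon--Wong \cite{BW}, so your argument should be judged as an independent proof. As such it is essentially correct, and it is close in spirit both to how the paper itself handles nonorientability elsewhere (Lemma~\ref{negative} also passes to the orientation double cover) and to the Levitt-style double-cover reductions mentioned in the introduction. The orientable half is solid: with a coherent orientation, $W\cong H_1(\tau;\R)$, the switch-condition vectors $w_R$ become the boundary cycles $[\partial R]$, $\Omega$ is the pullback of the (nondegenerate) intersection form along the surjection $H_1(\tau;\R)\to H_1(\Sigma;\R)$, and the exact sequence of the pair $(\Sigma,\tau)$ identifies the kernel with the span of the $[\partial R]$ subject to the single relation $\sum_R[\partial R]=0$, of dimension $N-1=n_{even}-1$. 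The nonorientable half also reaches the right conclusion, but two points need tightening. First, the identity $\ker\Omega_\tau=V_-\cap\ker\Omega_{\widehat\tau}$ does not follow from the factor-$2$ naturality alone: you must also rule out that $p^*\alpha$ pairs nontrivially with $V_+$; this follows because $\Omega_{\widehat\tau}$ is $\iota$-invariant, so the two eigenspaces are $\Omega_{\widehat\tau}$-orthogonal (equivalently, $\Omega_{\widehat\tau}(p^*\alpha,c)=\Omega_\tau(\alpha,\operatorname{tr}(c))$ by transfer). Second, the sign bookkeeping conflates two involutions: the push-forward $\iota_*$ on cycles and the precomposition action on weightings differ by a global sign under the identification $W(\widehat\tau)\cong H_1(\widehat\tau;\R)$, precisely because $\iota$ reverses the coherent orientation. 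As literally written, ``the symmetrization $w_{\widehat R_1}+\iota_\#w_{\widehat R_1}$ lies in $V_-$'' is correct only if $\iota_\#$ denotes the weighting action; with $\iota_*$ the element of $V_-$ descending to $\pm w_R$ is the difference $[\partial\widehat R_1]-\iota_*[\partial\widehat R_1]$, while the symmetrization lies in $V_+$. Likewise, for an odd region the half-rotation negates the alternating weighting \emph{and} the orientation reversal contributes a second sign, so $\iota_*[\partial\widehat R]=[\partial\widehat R]\in V_+$ as you claim, but your stated reason accounts for only one of the two signs. Once the conventions are fixed consistently, the equivariant computation (the $V_-$-part of the span of the boundary classes is freely spanned by the even-region differences, since the odd-region classes and the single relation $\rho$ lie in $V_+$) goes through and yields exactly the dimensions and bases in the statement; these are the delicate spots you yourself flagged, and they are fixable rather than genuine gaps.
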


\begin{lemma}\label{negative}
  Every nonzero element of $Ker(\Omega)$ assigns a negative value to
  at least one branch.
\end{lemma}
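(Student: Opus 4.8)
The statement is equivalent to saying that the cone of nonnegative weightings on $\tau$ (i.e.\ the transverse measures carried by $\tau$) meets $Ker(\Omega)$ only in $0$. The plan has three steps. \emph{Step 1: reduce to orientable $\tau$.} If $\tau$ is nonorientable, pass to the connected double cover $p\colon\widehat\Sigma\to\Sigma$ on which $\widehat\tau=p^{-1}(\tau)$ becomes transversely orientable; it is again filling, recurrent, and $3$-valent. Pulling back, a nonzero nonnegative $w\in Ker(\Omega)$ yields a nonzero nonnegative $\widehat w$, and $\widehat w\in Ker(\Omega_{\widehat\Sigma})$: since the deck involution $\iota$ preserves the orientation of $\widehat\Sigma$ (both surfaces being orientable), $\Omega_{\widehat\Sigma}$ is $\iota$-invariant, so $\Omega_{\widehat\Sigma}(\widehat w,\cdot)$ vanishes on the $\iota$-antisymmetric part of $W(\widehat\tau)$ automatically, and on the symmetric part (the pullbacks) it is proportional to $\Omega_\Sigma(w,\cdot)=0$. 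So we may assume $\tau$ is orientable; then, travelling around the boundary of a complementary region, the comparison of the transverse orientation with the direction of travel flips exactly at each cusp, so every complementary region has an even number of sides.

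\emph{Step 2: local form of a kernel element.} Fix a transverse orientation of $\tau$. By Theorem~\ref{thm:bw}, write $w=\sum_R c_R w_R$ over the complementary regions, where $w_R$ is supported on $\partial R$ and alternates between $+1$ and $-1$ on the successive sides of $R$. If a branch $\beta$ is flanked by distinct regions $R$ and $R'$, then $w_R(\beta)=-w_{R'}(\beta)$ (going around $\partial R$ and around $\partial R'$ traverses $\beta$ in opposite directions, reversing the orientation sign), while $w_R(\beta)=0$ if $R$ flanks $\beta$ on both sides. Hence, for every region $R$ and every branch $\beta\subset\partial R$ with neighbour $R'$ across $\beta$,
\[
w(\beta)=\bigl(c_R-c_{R'}\bigr)\,w_R(\beta),
\]
where $w_R(\beta)\in\{\pm1,0\}$ is constant on each side of $R$ and alternates from side to side around $\partial R$.

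\emph{Step 3: nonnegativity forces $w=0$.} Suppose $w\ge0$ and $w\ne 0$; then $R\mapsto c_R$ is non-constant, so $A:=\{R:c_R=\max_S c_S\}$ is nonempty and proper. For $R\in A$ every neighbour has $c_{R'}\le c_R$, so the inequality $(c_R-c_{R'})\,w_R(\beta)\ge0$ forces: across every boundary branch of $R$ where $c$ strictly drops we have $w_R(\beta)=+1$ (so such branches lie on the ``positive'' sides of $R$), and across every ``negative'' side of $R$ the neighbour again lies in $A$. Thus $U=\bigcup_{R\in A}\overline R$ is a subsurface whose boundary lies only on positive sides of $A$-regions, while each $\overline R$ with $R\in A$ is glued into the interior of $U$ along all of its negative sides, which alternate with its positive sides. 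Comparing the cusps of $\partial U$ with these disks glued along alternating sides then contradicts $\chi(U)\le1$ (valid since $\Sigma$ is not a sphere and $U$ is a proper subsurface). Hence no such $w$ exists.

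The main obstacle is the final Euler-characteristic bookkeeping in Step 3 --- turning ``disks glued to each other along at least every other side, with the remaining sides forming $\partial U$'' into a definite inequality on $\chi(U)$. There is an alternative to Steps 2--3 that trades this for a different difficulty: since $w\ge 0$ it is a transverse measure, hence a measured lamination $L$ carried by $\tau$, and $w\in Ker(\Omega)$ gives $i(L,\nu)=0$ for every measured lamination $\nu$ carried by $\tau$, because on an orientable track $\Omega$ is, up to sign, the geometric intersection pairing (no cancellation occurs between laminations carried by one orientable track); one then derives a contradiction by exhibiting a $\nu$ carried by the filling recurrent track $\tau$ that genuinely crosses $L$ --- the delicate point there being the case in which $L$ fails to fill $\Sigma$ and occupies only part of $\tau$.
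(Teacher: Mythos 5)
Your Step 1 (reducing to orientable $\tau$ via the orientation double cover) is sound and matches the spirit of the paper's treatment of the nonorientable case, although the paper phrases it slightly differently: rather than reducing the whole problem, it lifts the putative nonnegative kernel element to $\tilde\tau$, produces a witness curve $\beta$ there, and shows that $\beta+t(\beta)$ (hence the projection of $\beta$) pairs nontrivially with the original element.

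The real gap is in the heart of the argument (your Steps 2--3 and your alternative), and you flag it yourself. For the Euler-characteristic approach: passing to $U=\bigcup_{R\in A}\overline R$ is a reasonable idea, but ``$\chi(U)\le 1$'' is not a contradiction on its own --- $U$ can perfectly well be a disk --- and the promised ``cusp bookkeeping'' that would turn alternating-side gluings into a definite inequality is precisely the hard combinatorial step, which is not carried out. For the alternative via intersection numbers: the statement that $\Omega$ agrees with geometric intersection on laminations carried by one orientable track is correct, but you yourself identify the delicate point (what if the support of $w$ only occupies part of $\tau$?), and it is exactly there that a construction is needed.

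The paper sidesteps both difficulties with a different trick. Since $\mathrm{Ker}(\Omega)$ is cut out by rational linear equations, a nonzero nonnegative kernel element can be replaced first by a rational one, then an integral one, i.e.\ a multicurve $\alpha$ carried by $\tau$. In the orientable case $\Omega$ is the algebraic intersection form, and the witness $\beta$ is built explicitly inside the fibered neighborhood of $\tau$: start along a branch in the direction of the orientation, always staying to the left of all strands of $\alpha$, choosing either small branch at each switch. If $\alpha$ makes a left turn somewhere, $\beta$ is forced to cross it, and all crossings have the same sign; otherwise swap left and right. Recurrence of $\tau$ (and $\tau$ not being a circle) guarantees $\alpha$ turns somewhere. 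This construction automatically handles the case where $\alpha$ does not fill and only uses part of $\tau$, since $\beta$ is drawn relative to $\tau$, not relative to $\alpha$. So: your framing is reasonable and the reduction step is right, but you would need either to finish the Euler-characteristic count or to supply a concrete witness construction like the paper's; as written, neither branch of your argument is complete.
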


\begin{proof}
  Suppose there are nonzero elements of the kernel that assign
  nonnegative values to all branches. Then there are also such
  elements that are rational, because the equation defining the kernel
  has only rational coefficients. Hence there are also elements that
  are integral. Thus we can represent an element of the kernel by a
  multicurve $\alpha$ carried by $\tau$.

  First assume that $\tau$ is orientable. Then the
  Thurston pairing is given by the algebraic intersection number. We
  will construct a curve $\beta$ carried by $\tau$ so that
  $\alpha\cdot\beta\neq 0$. We will arrange that all intersections
  between $\alpha$ and $\beta$ have the same sign. We work in the
  fibered neighborhood of $\tau$. Choose a branch and
  start drawing $\beta$ in the direction of the orientation and to the
  left of all strands of $\alpha$. Whenever a switch is reached where
  there is a choice of a left or right turn, choose either turn and
  continue drawing on the left of all strands of $\alpha$. Eventually
  the curve $\beta$ closes up. If there are strands of $\alpha$ making
  a left turn somewhere, $\beta$ can be constructed so that it
  intersects $\alpha$. Otherwise, reverse the roles of left and
  right. Since $\tau$ is recurrent and not a circle, $\alpha$ must
  make a turn somewhere.

  If $\tau$ is nonorientable, lift $\alpha$ to a multicurve
  $\tilde\alpha$ in the orientation double cover $\tilde\tau\to\tau$.
  The weights of $\tilde\alpha$ are nonnegative, so by the above
  paragraph there is a curve $\beta$ carried by $\tilde\tau$ so that
  $\Omega(\tilde\alpha,\beta)\neq 0$. If $t:\tilde\tau\to\tilde\tau$
  is the covering involution, then
  $\Omega(\tilde\alpha,\beta)=\Omega(\tilde\alpha,t(\beta))$ so
  $\Omega(\tilde\alpha,\beta+t(\beta))\neq 0$. It follows that
  $\alpha$ is not in the kernel of $\Omega$ since it pairs
  nontrivially with the projection of $\beta$. Note that the
  projection of $\beta$ is not homotopically trivial because it pairs
  non-trivially with $\alpha$.
\end{proof}

We say that a geodesic lamination $\lambda$ carried by $\tau$ is {\it
  tightly carried} if some (hence every) splitting sequence from
$\tau$ to $\lambda$ contains no central splits.
We are grateful to James Farre for explaining the following theorem to
us. See also \cite[Proposition 8.1]{Yocsurv}.

\begin{thm}\label{thm:dim-ker}
  Let $\tau$ be a filling recurrent 3-valent track in $\Sigma$ and let
  $\lambda$ be a geodesic lamination tightly carried by $\tau$. The
  number of ergodic measures on $\lambda$, up to scale, is at most
  $$\frac 12(\dim W-\dim Ker(\Omega)).$$
\end{thm}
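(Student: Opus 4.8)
The plan is to realize the space of transverse measures on $\lambda$ as a subspace of $W$ on which $\Omega$ is nondegenerate, and then run Katok's symplectic argument. First I would recall that since $\lambda$ is tightly carried by $\tau$, the cone $V=V(\tau,\lambda)$ of nonnegative solutions to the switch equations that are ``supported on $\lambda$'' (i.e. survive every split of the splitting sequence from $\tau$ to $\lambda$) is precisely the cone of transverse measures on $\lambda$; its extreme rays correspond to the ergodic measures. If there are $N$ ergodic measures $\mu_1,\dots,\mu_N$, they are linearly independent in $W$ (being supported on disjoint pieces / having distinct ergodic decompositions), so they span an $N$-dimensional subspace $U\subseteq W$. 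The first key step is to show that $\Omega$ restricted to $U$ is nondegenerate; equivalently $U\cap Ker(\Omega)=0$. This should follow from Lemma~\ref{negative}: any element of $U$ is a real linear combination $\sum c_i\mu_i$, and if such a combination lay in $Ker(\Omega)$ it would (after noting the $\mu_i$ have disjoint supports on the branches, or at least that a nontrivial combination takes a strictly positive value somewhere if any $c_i\neq 0$) be a nonzero kernel element that is nonnegative on all branches when all $c_i\geq 0$ — contradicting Lemma~\ref{negative}. One has to be a little careful here because $U$ contains vectors with mixed-sign coefficients; the cleanest route is: the $\mu_i$ have pairwise disjoint branch-supports (distinct ergodic measures are mutually singular, so on the track level their supports can be taken disjoint after enough splitting), hence any nonzero $\sum c_i\mu_i$ is nonnegative on all branches for a suitable choice of global sign on at least one $\mu_i$ with $c_i\neq 0$ — no, better: scale so that $\sum|c_i|\mu_i$ also lies in the kernel if $\sum c_i\mu_i$ does, using disjointness of supports branch-by-branch. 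That gives a nonzero nonnegative kernel element, contradicting Lemma~\ref{negative}.

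The second step is the symplectic linear-algebra bound. Since $\Omega$ is an alternating form on $W$ with kernel of dimension $\dim Ker(\Omega)$, it descends to a nondegenerate (hence symplectic) form $\bar\Omega$ on the quotient $\bar W=W/Ker(\Omega)$, which has even dimension $\dim W-\dim Ker(\Omega)$. The image $\bar U$ of $U$ in $\bar W$ has dimension $N$ by Step~1 (injectivity on $U$), and the crucial claim is that $\bar U$ is \emph{isotropic} for $\bar\Omega$, i.e.\ $\Omega(\mu_i,\mu_j)=0$ for all $i,j$. This is the heart of Katok's argument: two transverse measures on the \emph{same} lamination $\lambda$ have zero Thurston intersection number, because the Thurston pairing of two measured laminations vanishes exactly when they have no transverse intersections, and $\mu_i,\mu_j$ are both carried on the leaves of $\lambda$, so their geodesic representatives do not cross. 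Hence $\bar U$ is isotropic in a symplectic space of dimension $\dim W-\dim Ker(\Omega)$, forcing $N=\dim \bar U\leq \tfrac12(\dim W-\dim Ker(\Omega))$, which is the desired inequality.

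I expect the main obstacle to be Step~1 — proving $U\cap Ker(\Omega)=0$ for the full real span $U$, not just the positive cone — since Lemma~\ref{negative} only directly controls nonnegative vectors. The fix is to exploit that distinct ergodic transverse measures on $\lambda$ are mutually singular, which on the combinatorial side means that after passing far enough down the splitting sequence (or on a fine enough train track still carrying $\lambda$) the weight vectors $\mu_i$ have disjoint supports among the branches. Then for any real coefficients $c_i$ with some $c_{i_0}\neq 0$, replacing each $c_i$ by $|c_i|$ does not change membership in $Ker(\Omega)$ branch-by-branch is the wrong phrasing; rather, $\sum_i c_i\mu_i$ and $\operatorname{sgn}(c_{i_0})\sum_i|c_i|\mu_i$ agree up to the kernel only if... — concretely, one argues: if $v=\sum c_i\mu_i\in Ker(\Omega)$ then for each $i$, $\Omega(v,\beta)=0$ for all $\beta$ and by disjoint supports one can test against curves $\beta$ meeting only the support of $\mu_i$ to conclude each $c_i\mu_i\in Ker(\Omega)$ individually; but a single ergodic measure $\mu_i$ is a nonzero nonnegative vector, so $\mu_i\notin Ker(\Omega)$ by Lemma~\ref{negative}, whence $c_i=0$. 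This isolation-by-support maneuver is the one genuinely delicate point; the rest is Katok's symplectic counting argument verbatim. A secondary subtlety worth a sentence is justifying that ergodic measures give linearly independent, disjointly-supported weight vectors in the first place — this is where tight carrying (no central splits) is used, guaranteeing that the limiting splitting sequence separates the ergodic components onto disjoint sub-tracks.
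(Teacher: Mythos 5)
Your overall scaffolding matches the paper's: (a)~show the linear span $U$ of the ergodic weight vectors meets $\mathrm{Ker}(\Omega)$ trivially, (b)~observe that $U$ is isotropic for $\Omega$ because two transverse measures on the same lamination have vanishing Thurston pairing, (c)~conclude $\dim U\le\tfrac12(\dim W-\dim\mathrm{Ker}(\Omega))$ by passing to the symplectic quotient. Step (b) and the dimension count are correct and exactly what the paper does. However, your mechanism for step (a) has a genuine gap.

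You propose to handle mixed-sign combinations $\sum c_i\mu_i$ by arranging, after enough splitting, that the $\mu_i$ have pairwise disjoint branch-supports, and then isolating each $c_i\mu_i$ individually. This fails in precisely the interesting case. When $\lambda$ is \emph{minimal} (the setting in which non-unique ergodicity is a nontrivial phenomenon), every transverse measure on $\lambda$ has full support equal to $\lambda$, and hence every ergodic $\mu_i$ assigns a strictly positive weight to \emph{every} branch of every track carrying $\lambda$ tightly. Mutual singularity of the $\mu_i$ is a fine measure-theoretic fact about the transversal measure classes; it does not translate into disjointness of branch-supports, no matter how far you split. So there is no curve $\beta$ ``meeting only the support of $\mu_i$,'' and the proposed isolation maneuver cannot get started. (In the case where $\lambda$ has several minimal components your argument would be fine, but that is the easy case.)

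The paper's actual argument for step (a) avoids this entirely and is worth noting because it is both simple and uses the structure of the kernel. Take $w_0$ in the relative interior of the measure cone $W(\lambda)$ and suppose toward a contradiction that $w_0+v\in W(\lambda)$ for some nonzero $v\in\mathrm{Ker}(\Omega)$. Follow a splitting sequence from $\tau$ toward $\lambda$ (with no central splits, since the carrying is tight). Kernel vectors are canonically identified along the sequence (they are determined by the even-sided complementary regions, Theorem~\ref{thm:bw}), and since there are only finitely many combinatorial types of tracks along the sequence, $v$ induces only finitely many distinct weight vectors. By Lemma~\ref{negative} each of these has a strictly negative entry, so there is $\epsilon>0$ such that at every stage $v$ assigns some branch a weight $<-\epsilon$. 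Meanwhile the branch weights induced by $w_0$ tend to $0$ along the sequence. Hence eventually $w_0+v$ assigns a negative weight to some branch, contradicting $w_0+v\in W(\lambda)$. This shows the tangent space to $W(\lambda)$ at $w_0$ injects into $W/\mathrm{Ker}(\Omega)$, and the rest of your argument takes over. If you want to salvage your write-up, replace the disjoint-support isolation with this splitting-sequence argument; the isotropy step and the symplectic count are already correct.
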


\begin{proof} (James Farre)
  Let $W(\lambda)\subset W$ be the cone of weights coming from
  transverse measures on $\lambda$. This is a simplicial cone and its
  dimension is equal to the number of ergodic transverse measures on
  $\lambda$. Fix a point $w_0\in W(\lambda)$ in
  the relative interior of $W(\lambda)$. The tangent space of $W$ at
  $w_0$ can be identified with $W$. We will argue that the tangent
  space to $W(\lambda)$ at $w_0$ and $Ker(\Omega)$ intersect
  trivially. This will imply that the tangent space to $W(\lambda)$
  injects and is
  a Lagrangian in the quotient $W/Ker(\Omega)$ and so has dimension at
  most half of that of $W/Ker(\Omega)$, proving the theorem.

  To prove the claim by contradiction, suppose there is some nonzero
  vector $v\in Ker(\Omega)$ so that $w_0+v\in W(\lambda)$. Thus $v$ is
  some linear combination of the elements of $Ker(\Omega)$
  corresponding to the even sided regions. When $\tau$ is
  (noncentrally) split to $\tau'$, there
  is a natural identification $W(\tau)\cong W(\tau')$ that identifies
  the cones of measures transverse to $\lambda$ and the kernels of the
  forms. Moreover, the elements of the kernel corresponding to even
  sided region also correspond and so do the coefficients in a linear
  combination of these elements.
  
  Now consider a splitting sequence from $\tau$ towards $\lambda$. 
  Since there are only finitely many combinatorial types
  of train tracks there will be only finitely many weight vectors
  along this sequence induced by $v$. Each has a negative weight by
  Lemma \ref{negative}, so
  there is some $\epsilon>0$ so that in each track in the sequence
  some branch gets induced weight $<-\epsilon$. On the other hand, the
  weights induced by $w_0$ will converge to 0. This means that
  eventually $w_0+v$ will assign a negative weight to some branch,
  contradiction. 
\end{proof}

We are now ready to finish the proof of Theorem~\ref{thm:upper-bound}.
Namely, starting with the train track $\tau$, and attaching disks to
the boundaries of each of the $n_{even}+n_{odd}$ (possibly punctured)
complementary regions, we obtain a closed surface of genus $g$. An
Euler characteristic count then yields
\[ \chi(\tau)+n_{even}+n_{odd}=2-2g. \] If $\tau$ is nonorientable,
recall that $\dim W(\tau) = - \chi(\tau)$, and that by
Theorem~\ref{thm:bw} we have $\dim \ker(\Omega) = n_\mathrm{even}$.
By Theorem~\ref{thm:dim-ker} the number $k$ of ergodic measures on
$\lambda$ is then at most
\[ \frac{1}{2}(\dim W(\tau) - \dim\mathrm{ker}(\Omega))
  = \frac{1}{2}(- \chi(\tau) - n_\mathrm{even})
  = \frac{1}{2}(2g-2 + n_\mathrm{odd}). \]

If $\tau$ is orientable,
recall that $\dim W(\tau) = 1 - \chi(\tau)$, and that by
Theorem~\ref{thm:bw} we have $\dim \ker(\Omega) = n_\mathrm{even} - 1$.
By Theorem~\ref{thm:dim-ker} the number $k$ of ergodic measures on
$\lambda$ is then at most
\[ \frac{1}{2}(\dim W(\tau) - \dim\mathrm{ker}(\Omega))
  = \frac{1}{2}(2 - \chi(\tau) - n_\mathrm{even})
  = \frac{1}{2}(2g + n_\mathrm{odd}). \]

This shows the claims in Theorem~\ref{thm:upper-bound}. 
\section{Triangular Configurations}
\label{sec:curves}
We will be concerned with collections of pairs of multicurves $\left(
(a_i, b_i) \right)_{i=1}^k$ on a closed oriented surface $S$. We
say that such a collection is \emph{triangular} if
\[ I(a_i, b_j) = I(a_i,a_j)=I(b_i,b_j)=0 \quad \quad j > i. \] and
\[ I(a_i,b_i)>0\] where $I$ is the geometric intersection number. In
addition, we will assume that all curves involved intersect
tranversely, are distinct from each other and fill $S$. In this
section we will allow parallel curves as well as bigon complementary
regions; in later application every such bigon or annulus will be
required to contain a marked point, so that the resulting curves are
in minimal position \emph{on the corresponding punctured surface}.

Thus the collections
$\cup_i a_i$ and $\cup_i b_i$ are multicurves, partitioned into $k$ subsets.

We say that the collection is
\emph{orientable} if (for a suitable choice of orientations on the
curves) all intersection points have the same sign.

By \emph{regions} we mean complementary components of $\bigcup a_i\cup
b_i$. These are (topological) polygons
with an even number of sides (alternating between arcs of $\cup a_i$ and
$\cup b_i$). Since a region with $2m$ sides with generate a $m$--prong
singularity as we shall see later, we call it a \emph{$m$--prong region}, and say that
it is \emph{odd} if $m$ is odd, and \emph{even} otherwise. That is, an
even region is a complementary polygon whose number of sides is
divisible by $4$. We denote by $n_{\mathrm{odd}}$ the number of odd
regions of the collection, and by $n_{\mathrm{even}}$ the number of
even regions. When $m=1$ we will ultimately have a puncture in the
region, but for now we work with the closed surface $S$.

\smallskip For any collection of pairs of curves $((a_i,b_i))$,
suppose that there are $N$ complementary regions $R_1, \ldots, R_N$,
and for each $j$ suppose that $R_j$ is a $m_j$--prong region.
Then the $m_j$ satisfy the Euler characteristic constraint:
\[ \sum \frac{2-m_i}2 = \chi(S)=2-2g. \] We say that a list
$m_1, \ldots, m_N$ of numbers satisfies the \emph{Euler characterisic
  constraint} on a genus $g$ surface, if the above is true. It will be
convenient to omit any 2's from the list of these numbers, as they
represent 2-prong singularities, so they are not singular at all. By
the {\it signature} of the collection $((a_i,b_i))$ we mean the pair
$(R,s)$ where $R$ is the unordered list of numbers representing prongs
in the complementary regions with 2's omitted and $s=\pm$ with $+$
representing orientability.

\smallskip In light of Theorem~\ref{thm:upper-bound}, 
for a triangular, non-orientable collection of $k$ multicurves
on a genus $g$ surface we say that it is \emph{optimal} if the upper
bound in the Thurston form is achived, i.e. if
$$k=g-1+n_{odd}/2.$$
Similarly, for a triangular, orientable collection of $k$ multicurves on a
genus $g$ surface we say that it is optimal if
$$k=g+n_{odd}/2.$$

The goal of this section is to prove the following result.
\begin{thm}\label{thm:existence-triangular-patterns}
  Let $S$ be a surface of genus $g\geq 0$. Suppose that
  \[ R=(m_1, \ldots, m_N) \] is a list of numbers and $s=\pm$.
  Assume that
  \begin{enumerate}[label = (\roman*)]
  \item $\sum\frac{2-m_j}{2}=2-2g$,
    \item if some $m_j$ is odd then $s=-$,
  \item if $g=1$, the input is not $R=(1, 3)$, nor $R=\emptyset$ with
    $s=-$, and
  \item if $g=2$, the input is not $R=(3, 5)$, nor $R=(6)$ with $s=-$.
  \end{enumerate}
  Then there is an optimal triangular configuration on $S$ whose
  signature is $(R,s)$.
\end{thm}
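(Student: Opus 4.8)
The plan is to prove Theorem~\ref{thm:existence-triangular-patterns} by induction, mirroring the inductive realizability arguments of Masur--Smillie~\cite{MS}: one fixes a short explicit list of \emph{building blocks} -- triangular configurations on low-genus surfaces -- together with a small repertoire of \emph{moves}, local surgeries that modify a triangular configuration while preserving triangularity, the filling condition, and, crucially, optimality, i.e.\ the \emph{exact} equality $k=g+\tfrac12 n_{\mathrm{odd}}$ in the orientable case and $k=g-1+\tfrac12 n_{\mathrm{odd}}$ in the nonorientable case (the inequality being supplied by Theorem~\ref{thm:upper-bound}). Since an orientable configuration forces $n_{\mathrm{odd}}=0$ while a nonorientable one with any odd region forces $s=-$, the two cases scarcely interact and I would run two parallel inductions. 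A convenient induction parameter is $(g,\sum_j m_j)$ ordered lexicographically: every admissible input that is neither a building block nor one of the listed exceptions is to be obtained from a strictly smaller admissible input by a single move.

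Step 1 (building blocks). Write down the base configurations by hand. On $S^2$, two circles meeting transversely in exactly two points cut the sphere into four bigons, realizing the nonorientable signature $((1,1,1,1),-)$ with $k=1$; since $g-1+\tfrac12 n_{\mathrm{odd}}=-1+2=1$ this is optimal (and the two crossings necessarily have opposite sign, so $s=-$ is forced). On the torus the standard dual pair $(a_1,b_1)$ meeting once has complement a single square, realizing $(R,s)=(\emptyset,+)$ with $k=1=g$, again optimal. A handful of further low-genus blocks complete the list -- for instance a pair on the torus crossing three times, two of the crossings bounding bigons, realizing $((1,1,4),-)$ -- chosen so that the exceptional inputs of clauses (iii) and (iv) are precisely the data not reached by the induction. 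The theorem makes no claim that those exceptional inputs are unrealizable, so no non-existence argument is needed; one only checks at the bottom of the recursion that no legal reduction ever produces them.

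Step 2 (moves). The workhorse is a \emph{handle move} sending an optimal triangular configuration of signature $(R,s)$ on $S_g$ to an optimal one of signature $(R',s)$ on $S_{g+1}$: choose a complementary region, glue a handle into it, let $b_{k+1}$ be the core of the handle and $a_{k+1}$ a curve crossing it once and closing up across the old region. As the new pair is appended as the last index, triangularity requires only that $b_{k+1}$ be disjoint from every older curve -- automatic, since it lies on the glued handle -- and that $a_{k+1}$ be disjoint from the older $a_i$ -- arranged by keeping its closing arc inside the chosen region -- while $I(a_{k+1},b_{k+1})=1>0$. Because $g$ and $k$ both increase by $1$, optimality forces (and one arranges) $n_{\mathrm{odd}}$ to be unchanged, with $R'$ differing from $R$ only in the prong counts of the one or two regions met by the surgery, the Euler identity $\sum_j(2-m_j)/2=2-2g$ bookkeeping automatically. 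One needs companion moves that, at the cost of adding a pair inserted at an appropriate (not necessarily last) position of the staircase -- so that its $b$-curve can wind around the curves of later pairs -- create two odd regions from an even one, raising $n_{\mathrm{odd}}$ by $2$ and $k$ by $1$, together with purely combinatorial moves re-partitioning complementary regions without touching $g$, $k$, or $n_{\mathrm{odd}}$. Each move must additionally be checked to preserve the filling property and, after the later insertion of marked points into bigons and annuli, minimal position on the punctured surface.

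Step 3 (assembling the induction, and the main obstacle). Given admissible $(R,s,g)$ off the exceptional list, reduce: if $g$ is large, apply the handle move in reverse at a suitable region; at small genus, undo an odd-region-creating or a re-partitioning move to decrease $\sum_j m_j$; terminate at a building block. The bookkeeping is a finite case analysis -- on the parity pattern of $R$ and the value of $s$ -- showing that a legal reverse move always exists unless one is already at a base case or at an exception. The main obstacle is Step 2: producing explicit band-and-handle surgeries whose effect on the complementary polygons is computed exactly (which $m$-prong regions appear), which keep the $a_i$ pairwise disjoint and the $b_i$ pairwise disjoint, which respect the staircase order, which realize the prescribed signature change, and -- the genuinely rigid point -- which preserve the precise count $k=g+\tfrac12 n_{\mathrm{odd}}$ (respectively $g-1+\tfrac12 n_{\mathrm{odd}}$) rather than a mere inequality. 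Everything else is either an explicit low-genus picture or finite combinatorics.
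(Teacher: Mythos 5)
Your high-level strategy -- fix low-complexity building blocks, a small repertoire of handle moves, and induct on genus and complexity while tracking the exact equality $k=g-1+\tfrac12 n_{\mathrm{odd}}$ (or $g+\tfrac12 n_{\mathrm{odd}}$) -- is indeed the paper's approach, and your description of the handle-in-region move (genus and $k$ both up by $1$, $n_{\mathrm{odd}}$ unchanged) matches the paper's Lemma~\ref{lem:adding-handle-one}. But two load-bearing pieces are missing, and I do not think they can be waved away as ``a handful of further low-genus blocks'' or ``companion moves.''

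First, the genus-$0$ base case is a theorem in its own right (Theorem~\ref{genus 0}): \emph{every} admissible signature on $S^2$ must be realized, and your induction has no mechanism to produce these. All the moves you describe either increase the genus (handle moves) or merely re-partition, but none of them can create new even prongs from the $1^4$ block, and in genus $0$ there is no genus left to trade. The paper's solution is a genuine new operation: connect-sum \emph{surgery at a crossing} of two spherical configurations, applied to two specific $k=2$ building blocks of signatures $(1^5,3)$ and $(1^6,4)$, with careful bookkeeping of crossing types to keep matched regions independent across successive surgeries. The crossing-surgery lemma and the analysis of which four regions merge (and when the merged region is odd) are the main technical content at the base of the induction, and your proposal does not contain a substitute. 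Second, the signatures $(3^{4g-4})$ and $(1,3^{4g-3})$ in genus $g\geq 2$ have no entry $\geq 4$ and at most one entry equal to $1$, so none of the moves you propose (or their reverses) apply: neither removing a $4$, nor subtracting $4$ from a large entry, nor the $(1)\leftrightarrow(3,1,1)$ re-partition can reach them from lower-complexity data. The paper realizes these ``maximal cases'' (Proposition~\ref{prop:maximal-case}) by an explicit and entirely separate construction from a pair of pants decompositions. Without it, your recursion has no base at the top of the odd-prong range and the whole induction fails for, e.g., $R=(3,3,3,3)$ in genus $2$. Finally, your claim that the genus-$1$ and genus-$2$ exceptions are ``precisely the data not reached by the induction'' is exactly the delicate case analysis the paper has to do by hand in those genera, and your phrase ``finite combinatorics'' glosses over a real (if routine) verification.
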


\begin{rem}
The excluded signatures are impossible to realize, see \cite{MS}. 
There is also a direct way to see this. The assumptions that the
configurations are triangular or optimal don't play any role below. We
only assume we have two transverse multicurves $a$ and $b$ 
that together fill
and we consider the complementary regions.
\begin{itemize}[leftmargin=5.5mm]
  \item $R=\emptyset$: all curves are parallel to two distinct curves
    and on the torus two curves with any orientations intersect with
    the same sign, so the configuration is orientable.
    \item $R=(1,3)$. Place a puncture in the 1-prong region. If two
      $a$ or two $b$ curves cobound an annulus that doesn't contain
      the puncture, then all regions in the annulus are quadrilaterals
      and one of the curves can be removed without changing region
      types. Similarly, if an $a$ and a $b$ curve cobound an annulus
      without the puncture, then the annulus cannot be intersected by
      any other curves, which is a contradiction. Now choose a
      complete hyperbolic metric on the punctured torus and realize
      all curves by geodesics. The
      hyperelliptic involution rotates by $\pi$ around the puncture
      and preserves all simple closed geodesics, giving a
      contradiction at the 1-prong region.
      \item $R=(3,5)$. We may again assume that there are no parallel
        curves and realize them as geodesics with respect to a
        hyperbolic metric. The hyperelliptic involution has to preserve
        both 3- and 5-prong regions, so it rotates the polygons thus
        mapping some $a$-curve to a $b$-curve. This contradicts the
        fact that it preserves all curves. 
        \item $R=((6),-)$. We can again assume that there are no
          parallel curves. First note that no given curve can be
          separating. Indeed, the complementary components would be
          tori with one boundary component, and it is not possible
          to tile these surfaces by quadrilaterals (for example by
          Poincar\'e-Hopf), so there would have to be a region which
          is not 2-prong in each component, contradiction. It follows that the multicurves $a$ and
          $b$ contain one or two curves each. We now consider two
          cases.

          {\it Case 1. There exist an $a$-curve $\alpha$ and a
            $b$-curve $\beta$ that
            have intersection points of both signs.}

          Cutting along $\alpha$ yields a torus with two boundary
          components and $\beta$ yields arcs joining boundary
          components. For each boundary component there is at least
          one arc with both endpoints on it. Such arcs can be
          separating or nonseparating. There are two possibilities for
          separating arcs, pictured in Figure \ref{6}; the second kind
          implies the existence of the first at the other boundary
          component. This leads to a contradiction since the resulting
          1-prong region, even when subdivided by other curves, will
          contain a 1-prong region.

          \begin{figure}
\includegraphics[scale=0.5]{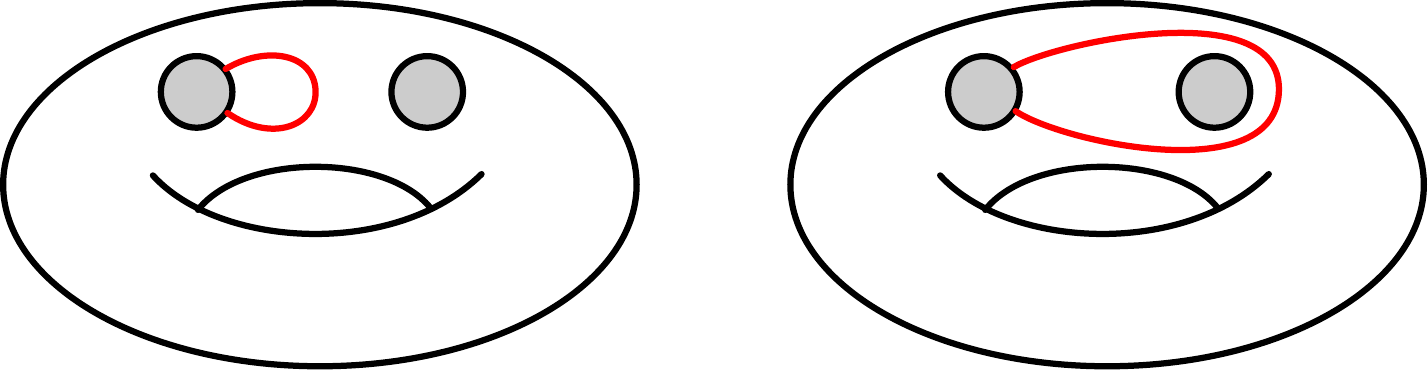}
\caption{Two types of separating arcs.}
\label{6}
\end{figure}

          If all the arcs coming from $\beta$ are nonseparating, we
          have the situaton as in Figure \ref{6a}. There will have to
          be a non-2-prong region in each complementary component,
          contradiction. 

          \begin{figure}
\includegraphics[scale=0.5]{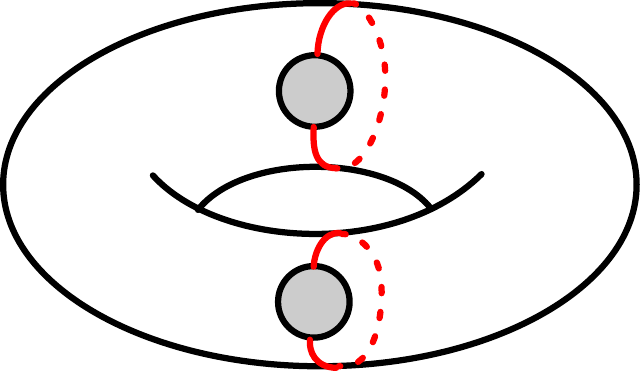}
\caption{Nonseparating arcs.}
\label{6a}
\end{figure}

         {\it Case 2. Any two curves have all intersection points of
           the same sign.}

         If $a$ or $b$ consist of a single curve then the collection
         is orientable. So we may assume $a=\{\alpha_1,\alpha_2\}$ and
         $b=\{\beta_1,\beta_2\}$, and in fact $\alpha_i\cap\beta_j\neq
         0$ for all $i,j$. We can orient $\alpha_1,\alpha_2,\beta_1$
         so that all intersections have the same sign, but then
         $\beta_2$ intersects $\alpha_1$ and $\alpha_2$ in opposite
         signs. Cut along $\alpha_1$ and $\alpha_2$, producing a
         sphere with 4 boundary components. $\beta_1,\beta_2$ yield
         arcs that connect these boundary components in a cyclic
         order. The complement of these 4 arcs consists of two 4-prong
         regions and therefore there will be at least two regions that
         are not 2-prong. \qed
\end{itemize}
\end{rem}

The proof of Theorem \ref{thm:existence-triangular-patterns} will
occupy the rest of this section, and has an inductive nature. There
are various moves, which modify a collection of pairs of curves (and
possibly the surface), which allow us to reduce complexity of the list
which is to be constructed. In genus $0$ we consider certain building
blocks, which form the basis of the induction. When attempting to
realize a given signature by an optimal triangular configuration, we
will always assume it satisfies (i) and (ii) above.

\subsection{Spheres}

We start by investigating genus $0$. Here, any collection is
non-orientable, and the Euler characteristic constraint is that the
sum of $\frac{2-m_i}2$ over all $m_i$-prongs should be $2$.

\bigskip Figure \ref{blocks} displays our two building blocks with
$k=2$. One is $(1^5,3)$ (a shorthand for $(1,1,1,1,1,3)$) and the
other is $(1^6,4)$. For later reference, we record the crossing types; following the convention that the arc between the first two listed regions is always red, and the regions are traversed clockwise. With this convention the type is defined up to swapping the first two with the last two entries.
\begin{itemize} 
	\item 1123, 1232, 2113, 1321, 3212, 2311. %

	For later use, we emphasise that the crossings of type 2113 and 2311/3211 only share the region of type 3.
	\item 2412, 4121, 1214, 4221, 2124, 1241, 2113, 1242

	Similar to above, we note that the crossings 1214 and 2114/1241 only share the region of type 3.
\end{itemize}

\begin{figure}[h]
\includegraphics[scale=0.5]{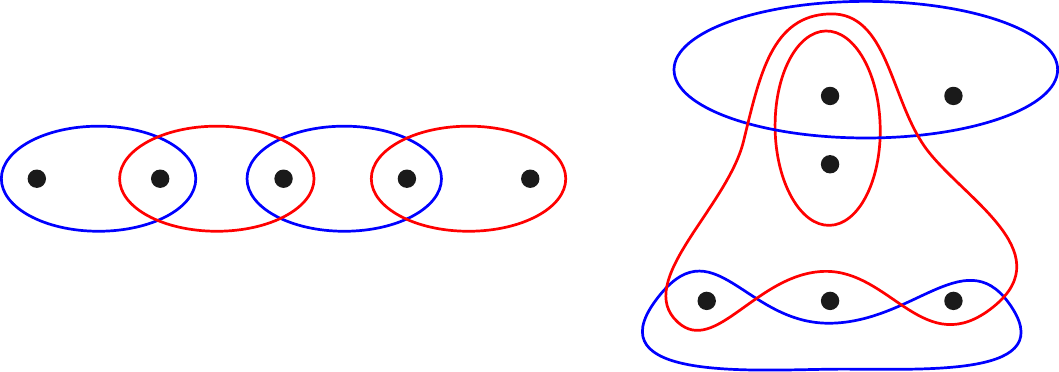}
\caption{The building blocks.}
\label{blocks}
\end{figure}

\subsubsection*{Surgery at a crossing in genus 0}
We will realize all other signatures in genus 0 by connect summing the
building blocks. Here all multicurves $a_i,b_i$ will be single
curves. We start by describing the basic operation.
We consider two triangular configurations $((a_i,b_i))$ with $k_1$
pairs of multicurves
and $((c_j,d_j))$ with $k_2$ pairs of multicurves on spheres. Choose a crossing $p$
between $a_i$ and $b_i$ on one and $q$ between $c_j$ and $d_j$ on the
other. Cut a disk around each crossing and glue the two boundary
components so that the arcs $a_i$ and $c_j$ glue into one curve $e$, and $b_i$
and $d_j$ into another curve $f$. 
We say that they are obtained by \emph{surgery at the crossings $p,q$}.
\begin{lemma}[Crossing Surgery]
  Let $((a_i,b_i))$ be a triangular configuration with $k_1$ pairs of multicurves
  and $((c_i,d_i))$ a triangular configuration with $k_2$ pairs of multicurves on
  spheres. Let $p$ be a crossing of $a_i, b_i$ and $q$ a crossing of
  $c_i, d_i$. Assume that the four regions around $p$ are all
  distinct, and the same is true at $q$.
  \begin{enumerate}[label = (\roman*)]
  \item The collection obtained by surgery at $p,q$ as above is triangular and
    has $k=k_1+k_2-1$ pairs of curves.
  \item If $((a_i,b_i))$ and $((c_i,d_i))$ are both 
    optimal, then the same is true for the surgered collection if the
    total number of odd regions in the connected sum is 4 less than
    the sum of the numbers of odd regions in the original
    configurations; equivalently, in every pair of matched regions at
    least one is odd.
      \end{enumerate}
\end{lemma}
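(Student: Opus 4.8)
The plan is to verify the two items essentially by local analysis near the surgered crossing, combined with bookkeeping of the Euler characteristic and the optimality formula. First I would set up the surgery carefully: cutting a small disk around $p$ removes an open quadrilateral whose four boundary arcs lie alternately on $a_i$ and $b_i$, and similarly at $q$; gluing the two boundary circles so that the $a_i$-arcs match the $c_j$-arcs and the $b_i$-arcs match the $d_j$-arcs produces a new sphere (genus adds, and $0+0=0$), with the curve $e=a_i\cup c_j$ and $f=b_i\cup d_j$ now each a single simple closed curve, while all other curves are unchanged. The hypothesis that the four regions around $p$ (resp. $q$) are distinct guarantees that this gluing genuinely merges the four regions around $p$ with the four around $q$ in pairs — no region gets glued to itself, so we do not accidentally create handles or change the count in an uncontrolled way.

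For item (i), triangularity: the new collection has $k_1+k_2-1$ index pairs because the pair $(a_i,b_i)$ and the pair $(c_j,d_j)$ have been amalgamated into the single pair $(e,f)$, and I would relabel so that $e,f$ is the highest-indexed pair (this is legitimate since $a_i,b_i$ were only required to satisfy the triangular inequalities with \emph{higher} indices, and everything else is disjoint from the surgery region). Then for indices $\ell < m$ both different from the new top index, the intersection numbers $I$ are unchanged and inherited from the two input configurations. For the top pair $(e,f)$ against a lower pair: any curve $a_\ell$ (or $b_\ell$, $c_\ell$, $d_\ell$) that the original configuration required to be disjoint from $a_i$ or $b_i$ (resp. $c_j$ or $d_j$) remains disjoint from $e$ or $f$, since the surgery only happens in a disk not meeting those curves; so all the vanishing intersection conditions persist. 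Finally $I(e,f)>0$: $e$ and $f$ still cross at all the surviving crossings of $a_i,b_i$ and of $c_j,d_j$ other than the two destroyed ones, and in any case $I(a_i,b_i)>0$ and $I(c_j,d_j)>0$ gives at least one surviving crossing unless both had exactly one — and even then the arcs $a_i,c_j$ and $b_i,d_j$ glue to curves that still meet (one can check the glued picture crosses once). One should also note the curves still fill, since filling is a local-complement condition and no complementary region other than the eight involved is altered, while the eight merge into four polygons.

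For item (ii), optimality: here the key identities are the Euler characteristic constraint and the optimality formula $k = g - 1 + n_{\mathrm{odd}}/2$ for non-orientable configurations (genus $0$ is always non-orientable, so orientability is not an issue). Under the surgery, $g$ is unchanged (stays $0$), $k$ drops by $1$, and eight polygonal regions — four at $p$ with $2m^p_1,\dots,2m^p_4$ sides and four at $q$ with $2m^q_1,\dots,2m^q_4$ sides — are replaced by four polygons whose side counts are $2(m^p_s + m^q_s) - 4$ for $s=1,\dots,4$ after a suitable matching (each glued region loses the two arcs that bordered the removed disk from each side). So each new region has prong number $m^p_s + m^q_s - 2$. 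Its parity is odd iff $m^p_s + m^q_s$ is odd iff exactly one of $m^p_s, m^q_s$ is odd. Summing: $n^{\mathrm{new}}_{\mathrm{odd}} = n^{(1)}_{\mathrm{odd}} + n^{(2)}_{\mathrm{odd}} - (\text{number of matched pairs in which both, or neither, is odd}) - (\text{correction from pairs where both are odd counted once not twice})$; cleanly, one checks $n^{\mathrm{new}}_{\mathrm{odd}} = n^{(1)}_{\mathrm{odd}} + n^{(2)}_{\mathrm{odd}} - 4$ precisely when in every matched pair at least one member is odd (equivalently each new region is even, i.e. each $m^p_s+m^q_s$ is even with one odd one even — wait, one must be careful: "at least one odd" in a pair makes the sum's parity depend on the other; the precise statement I will verify is the one in the lemma, that the total drop is $4$ iff each matched pair has at least one odd member). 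Granting that, the optimality check is immediate: with $k^{\mathrm{new}} = k_1 + k_2 - 1$, $g^{\mathrm{new}} = 0$, and $n^{\mathrm{new}}_{\mathrm{odd}} = n^{(1)}_{\mathrm{odd}} + n^{(2)}_{\mathrm{odd}} - 4$, one has
\[
k^{\mathrm{new}} = (k_1) + (k_2) - 1 = \left(-1 + \tfrac{n^{(1)}_{\mathrm{odd}}}{2}\right) + \left(-1 + \tfrac{n^{(2)}_{\mathrm{odd}}}{2}\right) - 1 + 2 = -1 + \tfrac{n^{\mathrm{new}}_{\mathrm{odd}}}{2},
\]
which is exactly optimality for the genus-$0$ surgered configuration.

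\textbf{Main obstacle.} The routine parts are the intersection-number bookkeeping and the Euler-characteristic arithmetic; the delicate point is the precise combinatorics of how the eight regions merge — in particular pinning down the matching of the four regions at $p$ with the four at $q$ (it is governed by the gluing of the boundary circle and the cyclic order of arcs, $a_i$-arc to $c_j$-arc and $b_i$-arc to $d_j$-arc), and then translating "at least one of each matched pair is odd" into the exact statement that the new side-count $2(m^p_s+m^q_s)-4$ is divisible by $4$ (equivalently $m^p_s+m^q_s$ even). I expect to need a small figure, or at least an explicit description of the cyclic arc pattern around a crossing, to make the matching unambiguous and to confirm that the parity claim in (ii) is stated correctly; the earlier recorded "crossing types" for the building blocks are exactly the data one uses to track which region is which through the gluing.
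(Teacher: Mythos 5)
Your overall strategy --- local analysis near the surgered crossing, then bookkeeping on the Euler characteristic and the optimality formula --- matches the paper's, but there are two concrete errors in the execution.

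First, in item (i), placing $(e,f)$ as the highest-indexed pair does \emph{not} yield a triangular ordering. Triangularity imposes $I(a_\ell, b_m) = 0$ only for $m > \ell$; equivalently, $b_i$ (which is part of $f$) is \emph{allowed} to intersect $a_\ell$ for every $\ell \geq i$. Thus for $\ell > i$ the number $I(a_\ell, f)$ may be nonzero, and if $(a_\ell, b_\ell)$ precedes $(e,f)$ in your order, triangularity fails. Your parenthetical justification tracks the constraints on $a_i$ but overlooks those running in the opposite direction from $b_i$. The fix is to interleave: place $(a_\ell,b_\ell)$ for $\ell<i$ and $(c_\ell,d_\ell)$ for $\ell<j$ before $(e,f)$, and the remaining pairs after; each required vanishing then follows either from the original triangularity or from the fact that curves on different spheres stay disjoint (the surgery only happens inside two small disks).

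Second, in item (ii), your prong count for the merged regions is off by one, and this flips the parity analysis --- which you half-notice in your ``wait'' aside but do not resolve. Cutting the disk around $p$ removes only a \emph{corner} of each adjacent region: the two sides meeting at $p$ are shortened, not deleted, and after gluing, the two shortened sides of $R_s$ are joined end-to-end to the two shortened sides of the matching $R'_s$. So the merged polygon has $(2m_s - 2) + (2m'_s - 2) + 2 = 2(m_s + m'_s) - 2$ sides, i.e.\ it is an $(m_s + m'_s - 1)$-prong region, not $(m_s+m'_s-2)$. (Sanity check: the sum of $(2-m)/2$ over the four matched pairs must drop from $4$ to $2$, which forces the $-1$, not $-2$.) Consequently a merged region is odd iff $m_s + m'_s$ is even, i.e.\ iff $R_s$ and $R'_s$ have the \emph{same} parity --- the opposite of what you wrote. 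With the corrected rule the count closes up: if $o_s \in \{0,1,2\}$ is the number of odd regions in the $s$-th matched pair, the drop in the odd-region count over that pair is $o_s - [\text{merged odd}]$, which equals $-1, 1, 1$ for $o_s = 0,1,2$ respectively, so a total drop of $4$ over four pairs forces $o_s \geq 1$ for every $s$, i.e.\ at least one odd region in each matched pair.
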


\begin{proof}
  For (i), note that $I(a_i,b_i)>1$ since $S=S^2$, so
  $I(e,f)>0$. Now order the collection as
  $$(a_1,b_1),\cdots,(a_{i-1},b_{i-1}),(c_1,d_1),\cdots,(c_{j-1},d_{j-1}),(e,f),$$$$(a_{i+1},b_{i+1}),\cdots,(a_{k_1},b_{k_1}),(c_{j+1}d_{j+1}),\cdots,(c_{k_2},d_{k_2})$$
  For (ii),
  first observe that the complementary polygons of the
  surgered collection are either complementary polygons of the
  original collections (if the region did not touch $p, q$), or is
  obtained by joining a region of $((a_i, b_i))$ touching $p$ with a
  region of $((c_i, d_i))$ touching $q$. There are exactly four
  regions of the connected sum which are of this type. If the original
  polygons had $2m_1$ respectively $2m_2$ sides, the resulting polygon
  will have $2m_1 + 2m_2 - 2$ sides. %
  This implies that the region is odd exactly if either both joined
  regions are even or both joined regions are odd.
  
  \smallskip Since any collection on a sphere is non-orientable, for
  an optimal configuration with $k$ curves, we require that
  \[ k=n_{odd}/2 - 1. \] In other words, there need to be
  $2(k+1) = 2(k_1+k_2)$ odd regions in the surgered collection. Since
  the original collections are optimal, they have $2(k_1+1)$
  respectively $2(k_2+1)$ odd regions. Hence, during the surgery we
  need to ``lose'' four odd regions. The loss of one must occur in
  every matched pair, forcing one (or both) of the numbers to be odd.
\end{proof}

The general signature in genus 0 will be realized by connect
summing the building blocks in a linear fashion, so that two of the
spheres are connect-summed to one other sphere and all others are
connect-summed to two spheres. It is convenient to explain this
construction in stages.
  
\begin{prop}\label{prop:basic-odd}
  For every odd $m$ the signature $(m,1^{m+2})$ is realized by an optimal configuration. Furthermore, we can choose the configuration so that there are crossings of type $112m$ and $211m$ 
  where the region of type $m$ is the only region these crossings have in common.
\end{prop}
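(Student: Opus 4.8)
The plan is to argue by induction on the odd integer $m\ge 3$, using the building block $(1^5,3)$ as the base case and the Crossing Surgery Lemma for the inductive step. For the base case $m=3$ the signature $(3,1^5)$ is realised by the building block $(1^5,3)$ of Figure~\ref{blocks}, an optimal triangular configuration with $k=2$; among its recorded crossings are one of type $1123$ (equivalently $2311$) and one of type $2113$, which as noted share only the region of type $3$, so the base case holds with the required crossings of types $112m$ and $211m$.

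For the inductive step, suppose $\Gamma$ is an optimal triangular configuration on the sphere realising $(m-2,1^m)$, having crossings of types $112(m-2)$ and $211(m-2)$ that meet only in the $(m-2)$-prong region. Take a fresh copy $B$ of $(1^5,3)$ and apply the Crossing Surgery Lemma: perform surgery at a crossing $p$ of $\Gamma$ on the boundary of its $(m-2)$-prong region and a crossing $q$ of $B$ on the boundary of its $3$-prong region, with the four regions around $p$ distinct and likewise at $q$, chosen so that (a) the surgery matches the $(m-2)$-prong region of $\Gamma$ with the $3$-prong region of $B$ — these then fuse into a region with $2(m-2)+2\cdot 3-2=2m$ sides, an $m$-prong region — and (b) the other three matched pairs fuse into $1$- or $2$-prong regions only, i.e. each is a $1$-with-$1$ or a $1$-with-$2$ match. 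On the sphere $I(\cdot,\cdot)>1$, so part~(i) of the lemma makes the surgered collection triangular with $k=\tfrac{m-1}{2}+2-1=\tfrac{m+1}{2}$ pairs; since every matched pair has an odd member (the pair $((m-2),3)$ is odd--odd, the rest $(1,1)$ or $(1,2)$), part~(ii) — equivalently, checking $k=n_{\mathrm{odd}}/2-1$ for the new collection — shows it is again optimal.

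It then remains to read off the signature and the distinguished crossings. The only large region of the new collection is the $m$-prong region from (a); counting $1$-prong regions — those of $\Gamma$ and of $B$ away from the surgery crossings together with the ones created by the $1$-with-$1$ fusions — yields exactly $m+2$, so the signature is $(m,1^{m+2})$. Choosing $q$ distinct from the crossings of $B$ of types $1123$ and $2113$, these two crossings of $B$ survive the surgery; their common region was the $3$-prong region of $B$, which is now part of the new $m$-prong region, so in the new configuration they have types $112m$ and $211m$ and share only the $m$-prong region, completing the induction.

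The main obstacle is engineering (a) and (b), that is, controlling precisely which complementary regions fuse under the Crossing Surgery. Which regions fuse is dictated by the local colour pattern at the two chosen crossings (equivalently the checkerboard $2$-colouring of the complementary regions, available since $S=S^2$), and the recorded crossing types of $(1^5,3)$ offer only three patterns around the $3$-prong region. Matching the $(m-2)$-prong region to the $3$-prong region forces a specific colour class at $p$, after which one must avoid matching two $2$-prong regions to each other — which requires $p$ to lie at a vertex of the $(m-2)$-prong region meeting three $1$-prong regions (or at least no two $2$-prong regions) of the appropriate colour. Supplying such a vertex is not immediate from the inductive hypothesis as stated, so I expect the real work to consist of carrying a stronger inductive hypothesis — also recording a vertex of the $m$-prong region of each configuration with three $1$-prong neighbours in the correct colour class — and verifying, from the explicit crossing data of the building block, that this enlarged package of crossing data is reproduced after each surgery.
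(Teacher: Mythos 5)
Your overall plan — induction on odd $m$ via crossing surgery with the $(1^5,3)$ building block — matches the paper's strategy, but the specific surgery you propose is harder than the paper's and you correctly identify that it leaves a gap. The gap is genuine and is precisely where your approach diverges from the paper: you insist that both marked crossings of the new configuration come from the fresh building block $B$, which forces the surgery crossing $q$ to be one of $B$'s other four crossings and forces the surgery crossing $p$ to be some vertex of $\Gamma$'s $(m-2)$-prong region with a favourable local type --- a vertex whose existence the inductive hypothesis, as stated, does not supply.

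The paper avoids this entirely by surgering \emph{at one of the tracked crossings on each side}: take $p$ to be the crossing of type $112(m-2)$ in $\Gamma$ itself and $q$ to be the crossing of type $2113$ in $B$. These two crossings are, by construction of the hypothesis, exactly the right local type to match $(m-2)$ with $3$ and pair each of the remaining three regions as $1$-with-$1$, $1$-with-$2$, $2$-with-$1$; no extra bookkeeping is needed. The two crossings one tracks in the output then come one from each side: the $211(m-2)$ crossing of $\Gamma$ (which survives and becomes $211m$) and the $1123$ crossing of $B$ (which survives and becomes $112m$); they still share only the large region because the surgery places them on opposite sides of the connect-sum annulus. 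This is the idea your proposal is missing: the surgery crossing should itself be one of the two crossings you are tracking, so that the inductive hypothesis already certifies its type. If you want to salvage your version (keeping both $1123$ and $2113$ of $B$), you would indeed need to carry a stronger inductive hypothesis as you suggest, but it is simpler to change which crossings survive.

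One smaller point: in your inductive step you should also say a word about why the four regions around $p$, and around $q$, are actually \emph{distinct} complementary components (a hypothesis of the Crossing Surgery Lemma), since two of the listed types are both $1$; this follows from the explicit picture of the building block and is preserved under the paper's surgery, but it should be noted.
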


\begin{proof}
  When $m=3$ this is a building block. For $m=5$ do surgery on two
  copies of the building block matching 1123 with 2113, resulting in
  2125 (see Figure \ref{surgery}). On the first building block, the crossing of type 2113 has
  turned into 2115, while on the second block the crossing 1123 has
  turned into 1125. They are still independent in the sense that the
  region 5 is the only region in common between those crossings.

  \begin{figure}[h]
\includegraphics[scale=0.7]{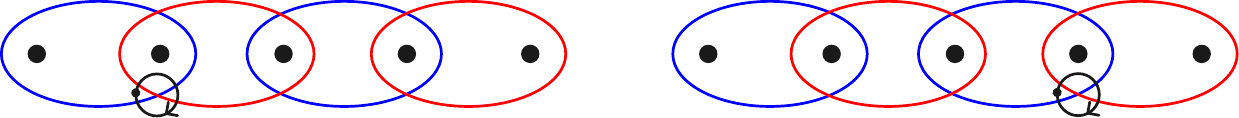}
\caption{Performing surgery by removing disks bounded by black curves
  and gluing along the curves, matching the dots and orientations,
  produces an optimal configuration with signature $(5,1^7)$.}
\label{surgery}
\end{figure}

  For larger $m$ we can now argue inductively: by matching 2113 in the
  building block to the the crossings of type $112(m-2)$ of the
  previous step we obtain a crossing of type $212m$. The crossing of
  type $1123$ of the building block turns into $112m$, while the
  crossing of type $211(m-2)$ of the previous step turns into $211m$,
  and they stay independent.
\end{proof}

\begin{prop}\label{prop:basic-even}
  For every even $m$ the signature $(m,1^{m+2})$ is realized by an optimal
  configuration. Furthermore, we can choose the configuration so that
  there are two crossings of type $211m$ and $112m$ where the region
  of type $m$ is the only region these crossings have in
  common.
\end{prop}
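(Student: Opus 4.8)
The plan is to mirror the inductive strategy used for the odd case in Proposition~\ref{prop:basic-odd}, but starting from the second building block $(1^6,4)$ instead of the first. For the base case $m=4$, the signature $(4,1^6)$ is exactly the second building block of Figure~\ref{blocks}, and the recorded crossing types there already include $2113$ (which, since the region of type $3$ is being replaced by a region of type $4$ in our labeling, I should interpret as two crossings whose only shared region is the $4$-prong region) together with its mirror $112\cdot4$; so the ``furthermore'' clause holds at the base. More precisely, from the crossing list for the $(1^6,4)$ block I would single out the two crossings $1214$ and $1241$ (equivalently $2114$), which by the remark following the crossing list share only the region of type $4$; relabeling to put these in the required $211m$/$112m$ form gives the base case.

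For the inductive step, suppose $(m,1^{m+2})$ has been realized optimally with two independent crossings of type $211m$ and $112m$ sharing only the $m$-prong region. Take the building block $(1^6,4)$ — no wait, to \emph{raise} $m$ by $2$ I should instead surger with a copy of the \emph{odd} building block $(1^5,3)$: performing crossing surgery matching a crossing of type $1123$ in the $(1^5,3)$ block to the crossing of type $112m$ in the previous configuration fuses the $3$-prong region of the block with the $m$-prong region (both odd, so by part (ii) of the Crossing Surgery Lemma we lose exactly one odd region in this matched pair), and — provided the other three matched pairs also each contain an odd region, which is arranged because the block's remaining regions around that crossing are $1$-prongs — the surgered configuration is again optimal. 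The fused crossing $1123 \# 112m$ becomes a crossing of type $112(m+2)$ in the new configuration; meanwhile the old crossing $211m$ is untouched (it shared only the $m$-prong region with $112m$, and that region survives as the new $(m+2)$-prong region) and becomes $211(m+2)$. By independence of the two distinguished crossings in the block, these two crossings in the new configuration again share only the $(m+2)$-prong region. The Euler characteristic bookkeeping: each surgery with $(1^5,3)$ adds one pair of curves and converts the list $(m,1^{m+2})$ into $(m+2,1^{m+4})$ — indeed we gain four $1$-prongs from the block, lose one of them and the old $m$ to the fusion, and the net is $1^{m+2} \to 1^{m+4}$ with $m \to m+2$, which is consistent with optimality since $k$ increases by $1$ and $n_{\mathrm{odd}}$ increases by $2$.

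The main obstacle I anticipate is not the induction itself but getting the crossing-type bookkeeping exactly right: one must verify that after relabeling the second building block into the normal form demanded by the statement, the two chosen crossings genuinely have the ``only the $m$-prong region in common'' property, and that this independence is preserved through each surgery (i.e., that the surgery disk removed around one distinguished crossing never meets the other distinguished crossing or its four surrounding regions). This is the same kind of careful local picture-chasing as in Proposition~\ref{prop:basic-odd}, and the cleanest writeup is probably to draw the analogue of Figure~\ref{surgery} for the $m=6$ case and then state that the general step is identical. One should also double-check the parity hypothesis of Crossing Surgery Lemma~(ii): since we fuse an odd ($3$-prong) region of the block with an odd ($m$-prong) region, that pair is fine, and the other three fused pairs are $1$-prong-with-$1$-prong (both odd), so all four matched pairs lose an odd region and optimality is genuinely preserved.
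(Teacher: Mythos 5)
Your proposal has several interlocking errors which together make the inductive step fail; the paper's approach differs from yours in crucial details.

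\textbf{The parity check is wrong.} You write that we ``fuse an odd ($3$-prong) region of the block with an odd ($m$-prong) region'' and that ``the other three fused pairs are $1$-prong-with-$1$-prong.'' Both claims are false in the even case: here $m$ is even, so the $m$-prong region is \emph{even}, not odd. Moreover the $1123$ crossing has regions $1,1,2,3$ and the $112m$ crossing has regions $1,1,2,m$, so one matched pair involves the two $2$-prong regions. With the natural matching that pair is $(2,2)$ --- both even --- which \emph{gains} an odd region rather than losing one, violating the hypothesis of the Crossing Surgery Lemma~(ii) and destroying optimality. The paper avoids this by gluing $1123$ to the \emph{other} distinguished crossing $211m$ (with regions $2,1,1,m$), for which the natural matching is $(1,2),(1,1),(2,1),(3,m)$: every pair has an odd member.

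\textbf{The ``fused crossing'' does not exist.} You claim that ``the fused crossing $1123\#112m$ becomes a crossing of type $112(m+2)$,'' but the surgery \emph{destroys} both crossings $p,q$; nothing survives at the gluing circle. The needed $112(m+2)$ and $211(m+2)$ crossings are the two surviving distinguished crossings whose $m$- or $3$-prong region has been enlarged. In the paper's choice ($1123$ glued to $211m$), the previous configuration's $112m$ becomes $112(m+2)$ and the block's $2113$ becomes $211(m+2)$. In your choice ($1123$ glued to $112m$), the surviving crossings are the block's $2113 \to 211(m+2)$ and the previous configuration's $211m \to 211(m+2)$: you get \emph{two} $211$-type crossings and no $112(m+2)$, so the inductive hypothesis is not reproduced.

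\textbf{The base case and the first step are different.} The $(1^6,4)$ block's distinguished crossings are $1214$ and $1241/2114$ --- it simply does not have a $112\cdot 4 = 1124$ crossing, and $1214$ is not equivalent to $1124$ under the swap convention (the two $1$-prong regions are opposite in one, adjacent in the other), so no ``relabeling'' converts one into the other. The paper deals with this by handling $m=4\to m=6$ separately: it matches the $3$-block's $2113$ to the $4$-block's $1214$ (giving $(2,1),(1,2),(1,1),(3,4)$, all good), producing $1126$ and $2116$; only \emph{after} that step does the uniform $1123$-to-$211m$ induction take over. Your uniform recipe has no valid first step to apply to.
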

  \begin{proof} 
  When $m=4$ this is a building block. For $m=6$ do surgery on the two
  different building blocks matching crossings 2113 (on the $m=3$ side) with 1214 (on the $m=4$ side). The result is a crossing of type 2216. For the same reason as in the previous proof, on the $m=3$ side of the connect sum curve there is now a crossing of type 1126, and on the $m=4$ side there is a crossing of type 2116. They are independent in the sense that the region 6 is the only common region. This satisfies the conclusion.
  
  In the next step we glue 1123 to 2116 to form 2128; the 1126 from before turns into 1128, and on the new $m=3$ side we find 2118. Hence we can inductively continue to find the desired configurations, every time increasing the large even number by 2.
  \end{proof}

  \begin{prop}\label{prop:same-partity}
    Any signature $(n_1, \ldots, n_k,1^{n_1+\cdots+n_k+2})$ where
    the $n_i$ are all odd or all even is realised by an optimal
    configuration. Furthermore there is a crossing of type
    $112n_1$. 
  \end{prop}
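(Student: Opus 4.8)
The plan is to argue by induction on $k$, attaching one basic block at a time with the Crossing Surgery Lemma. The base case $k=1$ is exactly Proposition~\ref{prop:basic-odd} in the all-odd case and Proposition~\ref{prop:basic-even} in the all-even case: these already realize $(n_1,1^{n_1+2})$ and provide a crossing of type $112n_1$ together with a second crossing of type $211n_1$ meeting it only in the $n_1$-region. For the inductive step I would carry a strengthened hypothesis: besides the crossing of type $112n_1$, the configuration $C$ realizing the signature with specials $n_1,\dots,n_{k-1}$ has a second ``free'' crossing of type $211n_{k-1}$, disjoint from the first and with four distinct surrounding regions. Given such a $C$, take the basic block $B$ for $(n_k,1^{n_k+2})$ from Proposition~\ref{prop:basic-odd} or~\ref{prop:basic-even}, with its crossings $112n_k$ and $211n_k$ meeting only in the $n_k$-region, and apply the Crossing Surgery Lemma to $C$ and $B$ at the free crossing of $C$ and the crossing $112n_k$ of $B$. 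The gluing is to be chosen so that the $n_k$-region is joined to a $1$-prong of $C$ and the $n_{k-1}$-region of $C$ is joined to a $1$-prong of $B$ (so both of these specials survive), while the two remaining matched pairs join a $2$-prong to a $1$-prong.

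Four points then have to be verified. First, the Crossing Surgery Lemma applies, because the four regions at each chosen crossing are distinct --- part of the strengthened hypothesis for $C$, and a property of $B$ coming from Propositions~\ref{prop:basic-odd} and~\ref{prop:basic-even}. Second, optimality is preserved: by part~(ii) of the Crossing Surgery Lemma it is enough that every matched pair of regions contains an odd region, and this is exactly where the hypothesis that the $n_i$ all have the same parity enters --- with the gluing chosen above, every matched pair contains either a $1$-prong or an $n_i$-prong, and in the odd case the $n_i$-prongs are themselves odd while in the even case every pair still contains a $1$-prong, so the one potentially bad pair (two even regions) never occurs. Third, the signature of the surgered configuration is the claimed one: the regions away from the two crossings are unchanged, the specials $n_1,\dots,n_{k-1}$ survive (those with $i<k-1$ untouched, and $n_{k-1}$ joined to a $1$-prong), the region $n_k$ survives, and the resulting number of $1$-prong regions is then forced on the sphere by the Euler characteristic constraint together with optimality. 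Fourth, the surgered configuration again carries the crossings needed to continue: the crossing of type $112n_1$ lies in the part of $C$ away from the surgery, hence is untouched, and the crossing $211n_k$ of $B$ meets the surgered crossing only in the $n_k$-region, so it survives and becomes the new free crossing. This closes the induction.

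I expect the fourth and (especially) the second and third points, carried out \emph{at the same time}, to be the main obstacle. Concretely one must track, for each of the few gluings compatible with the colour pattern around a crossing --- and for each admissible way of reading a crossing type, which is only determined up to swapping its first pair of entries with its last --- which four region-pairs get merged and what prong numbers they produce, and then choose the gluing that simultaneously avoids an all-even matched pair, leaves $n_{k-1}$ and $n_k$ intact, yields the correct count of $1$-prong regions, and preserves a usable crossing for the next stage. Once this combinatorial bookkeeping is set up, the remaining ingredients --- the ordering of the $k$ pairs of curves witnessing triangularity and the fact that the curves fill --- follow directly from the Crossing Surgery Lemma together with the corresponding properties of the basic blocks.
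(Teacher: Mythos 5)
Your overall strategy is the paper's — chain the building blocks from Propositions~\ref{prop:basic-odd} and~\ref{prop:basic-even} by repeated crossing surgery, carrying a $112n_1$-crossing and a free crossing along the induction — but the specific pair of crossings you choose to glue does not work, and this is exactly where the bookkeeping you flag as the main obstacle breaks down. You glue the free crossing $211n_{k-1}$ of $C$ to $112n_k$ of $B$ and ask for a gluing under which $n_{k-1}$ and $n_k$ are each matched against a $1$-prong. No such gluing exists: around $211n_{k-1}$ the two non-$1$-prong regions ($2$ at position $1$ and $n_{k-1}$ at position $4$) are separated by a black arc, whereas around $112n_k$ the two $1$-prongs (positions $1$ and $2$) are separated by a red arc, and any colour-respecting identification of the two crossing disks carries a black-separated pair of positions to another black-separated pair. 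Running through the admissible gluings of $211n_{k-1}$ with $112n_k$ confirms this: each one either merges $n_{k-1}$ with $n_k$ (destroying both specials and, when both are even, producing a matched pair with no odd member) or merges the two $2$-prongs (which already violates part~(ii) of the Crossing Surgery Lemma). So in the all-even case optimality cannot be preserved, and the induction does not close.

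The paper sidesteps this by always gluing two crossings of the same handedness, so that the non-$1$ pair on one side and the $1$-pair on the other sit in compatible (both black-separated) positions. It glues $211n_2$ to $1n_121$, i.e.\ the swap of $211n_1$: there $\{2,n_2\}$ occupies positions $\{1,4\}$ of the first crossing while $\{1,1\}$ occupies positions $\{1,4\}$ of the second, so a gluing sends $\{2,n_2\}$ onto the two $1$'s and $\{1,1\}$ onto $\{n_1,2\}$; every matched pair then contains a $1$, both $n_1$ and $n_2$ survive, and the crossings $112n_1$ and $112n_2$ persist. The chain then alternates — $2n_311$ glued to $112n_2$, then $1n_421$ glued to the resulting $211n_3$, and so on — with the orientation of the free crossing flipping at each stage. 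Your proof can be repaired by adopting this alternating scheme (equivalently, by gluing $211n_{k-1}$ to $211n_k$ rather than to $112n_k$); as written, there is a genuine gap.
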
 
  \begin{proof}
    We begin by taking the corresponding configurations from Proposition~\ref{prop:basic-odd} or~\ref{prop:basic-even} for the $n_i$. Now glue the 
    crossings of type $211n_2$ to $1n_121$. The result is a crossing of type $2n_12n_2$. The result also has independent crossings of the types $112n_2$ and $112n_1$. 
    
    To generate the other configuration, we will continue gluing, never involving the crossing $112n_1$, ensuring the last part of the claim. Namely, in the next step, we can glue $2n_311$ to $112n_2$ to obtain $2n_32n_2$, and (on the $n_3$--side) $211n_3$. We can thus glue $1n_421$ to this crossing and so on.
  \end{proof}
  
  \begin{thm}\label{genus 0}
Any signature satisfying the Euler characteristic condition for
genus 0 is realized by an optimal configuration. %
  \end{thm}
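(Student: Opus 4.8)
The plan is to reduce an arbitrary genus-$0$ signature to the special signatures already handled in Proposition~\ref{prop:same-partity}, using the Crossing Surgery Lemma as the glueing device. Recall that in genus $0$ the Euler characteristic constraint reads $\sum \frac{2-m_i}{2} = 2$, i.e.\ $\sum (m_i - 2) = -4$ over the listed prongs (with $2$'s omitted). Writing the list with $n_{\mathrm{odd}}$ odd entries and the rest even, and observing that each odd entry contributes an odd number to $\sum(m_i-2)$, we see that $n_{\mathrm{odd}}$ must be even; say $n_{\mathrm{odd}} = 2\ell$. The target number of curves for optimality is $k = n_{\mathrm{odd}}/2 - 1 = \ell - 1$. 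So the combinatorial bookkeeping I will track throughout is: each surgery decreases $k$ by $1$ and, when performed at a pair of regions at least one of which is odd, decreases $n_{\mathrm{odd}}$ by (at least) $2$ — and exactly $2$ when exactly one is odd; matching these two rates is what preserves optimality.

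First I would separate the odd prongs from the even prongs. Group the list as $(n_1, \dots, n_p)$ with all $n_i$ odd and $(m_1, \dots, m_q)$ with all $m_i$ even (either group possibly empty), and let $r = \sum n_i$, $s = \sum m_i$. From Proposition~\ref{prop:same-partity} I get an optimal configuration $C_{\mathrm{odd}}$ with signature $(n_1,\dots,n_p,1^{r+2})$ carrying a crossing of type $112n_1$, and an optimal configuration $C_{\mathrm{even}}$ with signature $(m_1,\dots,m_q,1^{s+2})$ carrying a crossing of type $112m_1$ (when a group is empty, use instead the building block $(1^5,3)$ or $(1^6,4)$, or simply the trivial two-parallel-curves configuration on the sphere, adjusting the count of spare $1$-prong regions). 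The total of spare $1$-prong regions, $(r+2) + (s+2)$, is generous, and it is precisely these that I will consume during glueing. The key point is that a $1$-prong region is odd, so any surgery that matches a $1$-prong region of one configuration with a region of the other automatically satisfies condition (ii) of the Crossing Surgery Lemma — at least one of the matched regions is odd — and hence preserves optimality while dropping $k$ by $1$ and $n_{\mathrm{odd}}$ by exactly $2$ (if the other region is even) or by more.

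Second, I would do the glueing: perform a single surgery matching a crossing adjacent to a $1$-prong region in $C_{\mathrm{odd}}$ with a crossing adjacent to a $1$-prong region in $C_{\mathrm{even}}$ (the building blocks, and the configurations produced in Propositions~\ref{prop:basic-odd}--\ref{prop:same-partity}, manifestly have such crossings with four distinct surrounding regions, as needed for the Lemma). This merges the two surfaces into one sphere with signature roughly $(n_1,\dots,n_p,m_1,\dots,m_q, 1^{\text{(leftover)}})$. I then iteratively absorb the remaining surplus $1$-prong regions: pair up two $1$-prong regions at a time via surgery along suitable crossings (matching $112\cdot1$-type crossings, always keeping the four surrounding regions distinct), each step killing two $1$-prong regions and dropping $k$ by $1$, until exactly the desired $2\ell$ odd regions remain. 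A short arithmetic check — using $\sum n_i + \sum m_i - 2 = \#\{1\text{-prongs}\}$ type identities coming from the Euler constraint at each stage — confirms that when the surplus is exhausted, $k$ has landed on $\ell - 1$, i.e.\ the configuration is optimal, and its signature is exactly the prescribed $(R, -)$ (every genus-$0$ configuration is nonorientable).

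The main obstacle I anticipate is not the arithmetic but the \emph{bookkeeping of independent crossings}: each surgery must be performed at a crossing whose four surrounding regions are distinct, and after a surgery the two new crossings created on either side must still have this property and must still be ``independent'' in the sense used in the earlier Propositions, so that subsequent surgeries remain legal. The earlier Propositions were carefully phrased to propagate exactly such independence statements ("$112n_1$ survives", "the region of type $m$ is the only common region"), so the real work is to choose, at each stage, which $1$-prong-adjacent crossings to glue so that enough independent crossings persist to finish — essentially organizing the whole construction as a tree (or linear chain) of connect sums as the paragraph before Proposition~\ref{prop:basic-odd} indicates. Once that combinatorial scheduling is fixed, each individual step is an immediate application of the Crossing Surgery Lemma.
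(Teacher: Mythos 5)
Your overall plan --- split the target signature into its odd-parity and even-parity parts, apply Proposition~\ref{prop:same-partity} to each, and then glue the two resulting spheres by crossing surgery --- matches the paper's argument. However, two errors in the bookkeeping create a genuine gap, and the ``iteratively absorb the surplus $1$-prongs'' step you propose to close that gap is both unnecessary and ill-defined.

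The first error concerns what a single crossing surgery does. It removes a disk around a crossing on each of the two spheres and glues the boundary circles; this simultaneously merges the \emph{four} regions around the first crossing with the four regions around the second, in four pairs. Accordingly, condition (ii) of the Crossing Surgery Lemma demands that in \emph{every one} of the four matched pairs at least one region is odd, and when this holds, $n_{\mathrm{odd}}$ drops by exactly $4$, not by $2$ as you assert. Your sentence ``a $1$-prong region is odd, so any surgery that matches a $1$-prong region of one configuration with a region of the other automatically satisfies condition (ii)'' is therefore too weak: one good pair out of four does not suffice, which is precisely why the paper tracks full crossing types like $112m_1$ and $2n_111$ rather than just a neighbouring region. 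Once this is fixed, the rate ``$k$ drops by $1$, $n_{\mathrm{odd}}$ drops by $4$'' matches $k = n_{\mathrm{odd}}/2 - 1$ for a connect sum of spheres, and a \emph{single} surgery joining $C_{\mathrm{odd}}$ to $C_{\mathrm{even}}$ already lands on the optimal count. (You were likely misled by the exponent $\sum n_i + 2$ in the statement of Proposition~\ref{prop:same-partity}, which is a typo for the Euler-forced $\sum n_i - 2k + 4$; with the corrected exponent the two pieces carry exactly four more $1$-prongs than the target, which are the four consumed by the one glue.)

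The second error is the iterative absorption step itself. Crossing surgery, as defined, connect-sums two spheres into one. After the single glue you have only one sphere, and ``pair up two $1$-prong regions at a time via surgery along suitable crossings'' has no meaning within that framework: a self-surgery on one sphere raises the genus and leaves genus $0$, and there is no operation in the Lemma that deletes $1$-prong regions in place. Deleting this paragraph entirely, correcting the arithmetic above, and checking the Euler count for one glue recovers the paper's proof.
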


  \begin{proof}
    First note that $1^4$ is realized by two great circles. 
    Configurations with only odd or only even entries $>2$ are handled by Proposition~\ref{prop:same-partity}. Finally, to get configurations of mixed parity, first construct configurations for only the even entries $n_i$ and only the odd entries $m_j$ using Proposition~\ref{prop:same-partity}. Then glue the crossings $112m_1$ and $2n_111$ guaranteed by that proposition.
  \end{proof}

\subsection{Moves in genus $>0$}
In this subsection we present a collection of moves on triangular
configurations on surfaces and study their effect on orientability and
optimality.

\subsubsection*{Adding Handles}
Here, we present various moves of similar flavor. In each,
we begin with a triangular configuration $((a_i,b_i))$ on a genus $g$
surface. In the first we choose a complementary region $R$ with $2r$
sides and add a handle to the inside of the region (increasing the
genus by $1$), and add a pair of curves to the collection one of which
is the meridian of the handle, and one which follows one of the curves
of the original collection. See Figure~\ref{handle3}. We say that the
new collection is obtained by \emph{adding a handle in a region}
\begin{figure}[h]
    \includegraphics[scale=0.6]{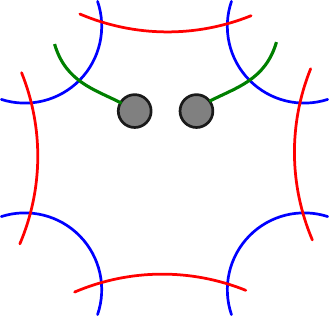}
    \caption{Adding a handle to the same region}
    \label{handle3}
\end{figure}
\begin{lemma}[Adding a handle in a region]\label{lem:adding-handle-one}
  Suppose that $((a_i,b_i))$ is a triangular configuration, and $R$ a
  complementary component.
  \begin{enumerate}[label = (\roman*)]
  \item Every region of the collection obtained by adding a handle is
    either a region of $((a_i,b_i))$ distinct from $R$, or has eight
    more sides than $R$ (and there is exactly one such).
  \item The collection obtained by adding a handle in $R$ is
    orientable if and only if $((a_i,b_i))$ is.
  \item The collection obtained by adding a handle in $R$ is
    optimal if and only if $((a_i,b_i))$ is.
  \end{enumerate}
\end{lemma}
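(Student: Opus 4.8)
The plan is to establish (i) — the combinatorial description of the complementary regions — first, and then to deduce (ii) and (iii) as, respectively, an orientation bookkeeping argument and a purely arithmetic one.

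For (i), the point is that the move is supported inside (a neighborhood of) $R$: away from $R$ neither the surface nor any curve $a_i,b_i$ is touched, so every complementary region other than $R$ survives unchanged. Write $T:=R\cup H$, where $H$ is the added handle; attaching a handle drops Euler characteristic by one, so $T$ is a once-holed torus with $\chi(T)=\chi(R)-1=-1$ and $\partial T=\partial R$. Both new curves lie in the interior of $T$: the meridian $m$ is disjoint from every other curve by construction, and $c$ is obtained by pushing boundary arcs of $R$ into the interior and routing them over $H$, so $c$ too is disjoint from $\partial T$ and from all old curves. Hence the complementary regions of the new configuration lying in $T$ are exactly the components of $T$ cut along $m\cup c$. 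From Figure~\ref{handle3} one reads off that $m$ and $c$ meet in two points and that the complement is connected; since cutting a surface along two transverse curves raises $\chi$ by the number of intersection points, the cut surface $\hat T$ has $\chi(\hat T)=\chi(T)+2=1$ and a single boundary circle, hence is a disk, i.e.\ a polygon. Counting its sides: $\partial T$ meets $m\cup c$ nowhere, so it contributes its $2r$ original sides unsubdivided, while each of $m$ and $c$ is cut into two arcs, each of which appears twice along $\partial\hat T$, contributing $4+4$ further sides — total $2r+8$. This proves (i). Appending the pair $(a_{k+1},b_{k+1}):=(c,m)$ at the end of the list keeps the collection triangular, since $I(a_i,m)=I(a_i,c)=I(b_i,m)=0$ for $i\le k$ while $I(c,m)=2>0$.

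For (ii), one implication is immediate: deleting the pair $(c,m)$ and collapsing $H$ back to a disk recovers $((a_i,b_i))$, so any coherent orientation of the new collection restricts to one of the old. Conversely, assume $((a_i,b_i))$ is orientable and fix compatible orientations. Orient $c$ to agree with the curve whose boundary arcs it follows; as $c$ runs parallel to sub-arcs of that curve and is otherwise disjoint from the configuration, $c$ has no intersection points with the old curves, so no new sign constraint is created. It remains to orient $m$; its only intersections are its two points with $c$, and one checks from Figure~\ref{handle3} that one of the two orientations of $m$ makes both positive. Hence the new collection is orientable. (This is consistent with (i): $R$ is replaced by an $(r+4)$-prong region, of the same parity as $R$, so no parity obstruction to orientability appears.)

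For (iii), the argument is now bookkeeping. By (i) the only complementary region that changed is $R$, replaced by an $(r+4)$-prong region, and since $r+4\equiv r\pmod 2$ the number $n_{odd}$ of odd regions is unchanged; adding a handle increases both $k$ and $g$ by exactly $1$; and by (ii) orientability is preserved, so the same optimality identity is the relevant one before and after. In the non-orientable case optimality reads $k=g-1+n_{odd}/2$: both sides increase by $1$ under the move, so it holds afterwards iff it held before, and the orientable case $k=g+n_{odd}/2$ is identical. The main obstacle is part (i): reading off from Figure~\ref{handle3} the precise way $m$ and $c$ sit inside $T$ (that they meet in two points with connected complement) and verifying the side count; once that local model is pinned down, (ii) and (iii) are routine.
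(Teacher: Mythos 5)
Your overall structure (verify (i), deduce (ii) by orienting, (iii) by arithmetic) mirrors the paper, which simply says (i) is ``clear from construction'' and then gives the same one-line arguments for (ii) and (iii). Parts (ii) and (iii) of your proposal are fine in substance, and (iii) is exactly what the paper does.

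However, your detailed justification of (i) rests on a misreading of the construction, and the resulting topological argument does not go through. You assert that $c$ ``is obtained by pushing boundary arcs of $R$ into the interior and routing them over $H$, so $c$ too is disjoint from $\partial T$ and from all old curves.'' That is not what ``follows one of the curves of the original collection'' means: $c$ is a parallel copy of an entire closed curve $a_i$ (say) of the original collection, so it travels all the way around the surface, necessarily intersecting the $b_j$'s that $a_i$ intersects; only an arc of $c$ lies inside $T=R\cup H$, and that arc has both endpoints on $\partial T$. (This is also implicit in the paper's proof of (ii) --- orienting $c$ ``the same way as the curve it follows'' is only meaningful because $c$ inherits sign constraints at its intersections with the $b_j$'s --- and in the analogous coadjacent move, where the paper explicitly says the new $a$-curve may meet old $b$-curves.) With your reading the argument breaks: if $m$ and $c$ are both simple closed curves in the \emph{interior} of $T$, then $\partial T$ is a boundary circle of the cut surface that is disjoint from the two new copies of $m\cup c$, so the cut surface has at least two boundary components and cannot be a single disk --- with two intersection points you get a disk plus an annulus, and an annular complementary region would mean the new collection fails to fill. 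So ``$\hat T$ has a single boundary circle, hence is a disk'' is false under your hypotheses, and the side count $2r+8$ is not established. (There is also a small arithmetic slip earlier: adding a handle drops $\chi$ by $2$, not $1$, so $\chi(T)=\chi(R)-2=-1$; you reach the right value but state the wrong reason.)

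To repair (i) you would need to work with the correct local model: inside $T$ one sees the closed curve $m$ together with an \emph{arc} $c'=c\cap T$ whose two endpoints lie on $\partial T$. With that, the boundary of the big complementary piece is a single circle made of arcs of $\partial R$, of $m$, and of $c'$, and a direct count (or the Euler-characteristic bookkeeping, now with $c'$ an arc meeting $\partial T$) gives the $(2r+8)$-gon, with only new $2$-prong quadrilaterals appearing elsewhere along $c$. Once (i) is established this way, your arguments for (ii) and (iii) need only cosmetic adjustment (in (ii), $c$ does meet the old $b_j$'s, and the compatible orientation is the one inherited from $a_i$), and they then coincide with the paper's.
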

\begin{proof}
  The first claim is clear from construction. For the second claim
  choose the orientation on the new curve in the same way as the curve
  it follows, and then choose the orientation on the meridian
  accordingly.

  The last claim follows since adding a handle in $R$ increases both
  the genus and the number of curves in the collection by $1$, while
  having no effect on the number of odd regions.
\end{proof}
This allows the following modification of signatures.
\begin{prop}[Subtract 4]\label{prop:handle-plus-4}
  Suppose that $((n,m_1,\cdots,m_k),s)$ is a signature realized by an
  optimal collection of multicurves in genus $g-1$. Then the signature
  $((n+4,m_1,\cdots,m_k),s)$ is realized by an optimal collection in
  genus $g$.
\end{prop}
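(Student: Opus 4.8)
The plan is to apply the ``adding a handle in a region'' move of Lemma~\ref{lem:adding-handle-one} verbatim. Start with an optimal triangular configuration $\mathcal{C}=((a_i,b_i))$ on a surface $S'$ of genus $g-1$ whose signature is $((n,m_1,\dots,m_k),s)$, and let $R$ be the complementary region of $\mathcal{C}$ with $2n$ sides, that is, the $n$-prong region recorded by the first entry of the list. Form a new configuration $\mathcal{C}'$ on a surface $S$ of genus $g$ by adding a handle inside $R$, exactly as in the construction preceding Lemma~\ref{lem:adding-handle-one}.

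Now I would simply read off the three conclusions of Lemma~\ref{lem:adding-handle-one}. Part (i) says every complementary region of $\mathcal{C}'$ is either a region of $\mathcal{C}$ other than $R$ (so the prongs $m_1,\dots,m_k$ are unaffected), or is the single new region, which has $2n+8=2(n+4)$ sides and hence is an $(n+4)$-prong region; thus the list of prongs of $\mathcal{C}'$ is $(n+4,m_1,\dots,m_k)$. Part (ii) says $\mathcal{C}'$ is orientable if and only if $\mathcal{C}$ is, so the orientability symbol stays $s$, and the signature of $\mathcal{C}'$ is $((n+4,m_1,\dots,m_k),s)$. Part (iii) says $\mathcal{C}'$ is optimal because $\mathcal{C}$ is. This gives the proposition.

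There is essentially no obstacle: the content is entirely contained in Lemma~\ref{lem:adding-handle-one}, and the only things to note are the arithmetic $2n+8=2(n+4)$ which identifies the new region as an $(n+4)$-prong region, that $n+4\neq 2$ so it is a legitimate entry of the prong list (true since $n\geq 1$), and --- as a consistency check --- that replacing the summand $\tfrac{2-n}{2}$ by $\tfrac{2-(n+4)}{2}=\tfrac{2-n}{2}-2$ shifts $\sum\tfrac{2-m_j}{2}$ by $-2$, matching the passage from $2-2(g-1)$ to $2-2g$. The only point that deserves explicit mention is that the region $R$ on which we operate is precisely the one recorded by the first prong in the realized signature.
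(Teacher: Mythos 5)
Your proposal is correct and follows essentially the same route as the paper: the paper's proof is the one-line observation that one adds a handle in the $n$-prong region and invokes Lemma~\ref{lem:adding-handle-one}, which is exactly what you do (with a more detailed verification of the three parts of that lemma and the arithmetic $2n+8=2(n+4)$).
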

\begin{proof}
  Realise the former signature in genus $g-1$, and add a handle to the
  $n$-prong region using Lemma~\ref{lem:adding-handle-one}.
\end{proof}

Similarly to the above, we can add a handle joining two different
regions.  Namely, choose distinct complementary regions $R_1, R_2$ with $2r_i$
sides adjacent to the same curve $\alpha$, which is component of some
$a_i$ or $b_i$. We say such regions are
\emph{coadjacent}. Now perform a surgery by cutting out a disk from
both $R_i$ and glue a cylinder along the resulting boundary
components, preserving the orientability of the surface
(increasing the genus by $1$), and add a pair of curves to the
collection one of which is the meridian of the handle, and one which
follows $\alpha$. See Figure~\ref{handle}.
  \begin{figure}[h]
    \includegraphics[scale=0.7]{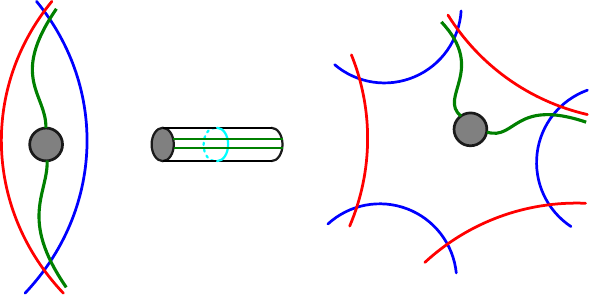}
    \caption{Joining two regions with a handle}
    \label{handle}
  \end{figure}
  We say that the new collection is obtained by \emph{joining coadjacent regions
  $R_1, R_2$ with a handle}. This produces a new triangular
  collection. If $\alpha$ belongs to some $a_i$, call the meridian
  curve $b$ and let $a$ be the curve that follows $\alpha$, and place
  $(a,b)$ at the end of the ordered list of pairs of multicurves. If
  $\alpha$ belongs to some $b_i$, let $a$ be the meridian and $b$ the
  curve that follows $\alpha$, and place $(a,b)$ at the beginning.

\begin{lemma}[Joining regions with a handle]\label{lem:adding-handle-coadjacent}
  Suppose that $((a_i,b_i))$ is a triangular configuration, and $R_1, R_2$ 
  complementary components which are coadjacent.
  \begin{enumerate}[label = (\roman*)]
  \item Every region of the collection obtained by adding a handle is
    either a region of $((a_i,b_i))$ distinct from $R_1, R_2$, or has four
    more sides than $R_i$ (and there is exactly one such for each $R_i$).
  \item The collection obtained by joining $R_1$ to $R_2$ with a handle is
    orientable if and only if $((a_i,b_i))$ is.
  \item The collection obtained by adding a handle in $R$ is
    optimal if and only if $((a_i,b_i))$ is.
  \end{enumerate}
\end{lemma}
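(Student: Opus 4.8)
The plan is to mirror the proof of Lemma~\ref{lem:adding-handle-one}, running the same three steps for the two-region surgery. Once the local picture near the attached cylinder is pinned down, all three claims become bookkeeping, so I would begin there. Write $\Gamma=\bigcup_i a_i\cup b_i$. Away from a neighbourhood of the cylinder nothing changes: the curves of $\Gamma$ are untouched, $\alpha'$ runs parallel to $\alpha$, and the thin annulus between $\alpha$ and $\alpha'$ is subdivided by the curves crossing $\alpha$ into quadrilaterals only. Hence every complementary region disjoint from $R_1\cup R_2$ survives verbatim.

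For (i) the content is entirely local at $R_1$ and $R_2$. Each region $R_i$ loses the small disk $D_i$ but gains the half of the cylinder lying between $\partial D_i$ and the meridian $\mu$, so it becomes an annulus one of whose boundary circles is $\mu$; it is then cut by the arc of $\alpha'$ that runs from a side of $R_i$ through the handle to $\mu$. Since cutting an annulus along a spanning arc yields a disk, this is again a single polygon, and a count along its boundary gives $2r_i+4$ sides: the two sides of the arc of $\alpha'$ contribute two edges, $\mu$ itself (crossed by $\alpha'$ in a single point) contributes one, and the side of $R_i$ that $\alpha'$ enters through is split into two, contributing one more. No further regions appear, and the regions descending from $R_1$ and from $R_2$ stay distinct because $\mu$ separates them locally. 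This side count, together with verifying that nothing else is created, is the one genuinely fiddly point; it is forced by Figure~\ref{handle}, and a consistency check is that the number of regions is unchanged while the total number of prongs goes up by $4$, matching $\chi(S_{g+1})=\chi(S_g)-2$ and leaving $\sum\frac{2-m_i}{2}$ correct.

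Claims (ii) and (iii) then follow quickly. For (ii), orient $\alpha'$ like $\alpha$: then $\alpha'$ meets $\Gamma$ exactly in pushed-off copies of the points of $\alpha\cap\Gamma$, with the same signs, and on its detour meets only $\mu$, so any coherent orientation of the old configuration extends over $\alpha'$; since $\mu$ meets only $\alpha'$, in a single point, orienting $\mu$ accordingly makes all signs agree, exactly as in Lemma~\ref{lem:adding-handle-one}. Conversely, restricting a coherent orientation of the new collection to $\Gamma$ orients the old one, so the two configurations are orientable or not together. For (iii), the surgery raises the genus by $1$ and the number of pairs of multicurves by $1$, while by (i) it replaces the $r_i$-prong region $R_i$ by an $(r_i+2)$-prong region and leaves all other regions alone; thus $n_{\mathrm{odd}}$ is unchanged, and by (ii) the appropriate optimality equality ($k=g+n_{\mathrm{odd}}/2$ if orientable, $k=g-1+n_{\mathrm{odd}}/2$ otherwise) holds after the surgery exactly when it held before. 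The main obstacle is the explicit local bookkeeping of step (i).
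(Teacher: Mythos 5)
Your proof is correct and takes the same route as the paper, which disposes of (i) with ``clear from construction,'' of (ii) by orienting $\alpha'$ like $\alpha$ and then the meridian accordingly, and of (iii) by noting genus and the number of pairs both go up by one while $n_{\mathrm{odd}}$ is unchanged. Your added value is the explicit side count at $R_i$ (landing on $2r_i+4$ with an Euler-characteristic consistency check) and the explicit parity observation that $r_i\mapsto r_i+2$ preserves oddness, which the paper leaves implicit; the only cosmetic slip is that the thin quadrilateral annulus between $\alpha$ and $\alpha'$ does alter some complementary regions (subdividing them by $2$-prong rectangles), so ``survives verbatim'' should be read as ``survives up to omitted $2$-prong regions.''
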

\begin{proof}
  The first claim is clear from construction. For the second claim
  choose the orientation on the new curve in the same way as the curve
  it follows, and then choose the orientation on the meridian
  accordingly.

  The last claim follows since adding a handle in $R$ increases both
  the genus and the number of curves in the collection by $1$, while
  having no effect on the number of odd regions.
\end{proof}
This lemma allows the following modification of signatures.
\begin{prop}[Remove Fours]\label{prop:handle-plus-22}
  \begin{enumerate}[label = (\roman*)]
  \item Suppose that $((m_1,\cdots,m_k),s)$ is a signature realized by an
  optimal collection of multicurves in genus $g-1$. Then the signature
  $((4,4,m_1,\cdots,m_k),s)$ is realized by an optimal collection in
  genus $g$.
\item Suppose that $(1,m_1,\cdots,m_k)$ is a (nonorientable) signature realized by an
  optimal collection of multicurves in genus $g-1$. Then the signature
  $(4,3,m_1,\cdots,m_k)$ is realized by an optimal collection in
  genus $g$.
  \end{enumerate}
\end{prop}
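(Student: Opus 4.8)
The plan is to mimic the proof of Proposition~\ref{prop:handle-plus-4}: realize the given signature in genus $g-1$ by an optimal triangular configuration, and then apply the handle move of Lemma~\ref{lem:adding-handle-coadjacent} to change the signature appropriately while preserving optimality. The only subtlety, compared to Proposition~\ref{prop:handle-plus-4}, is that joining two coadjacent regions with a handle merges \emph{two} regions into \emph{one}, each gaining four sides (i.e. two extra prongs). So if I join an $r_1$-prong region $R_1$ to an $r_2$-prong region $R_2$ (both adjacent to a common curve $\alpha$), the new configuration has one region of $r_1+r_2+2$ prongs in place of the two, genus increased by one, and one more pair of multicurves; all other regions, orientability, and by Lemma~\ref{lem:adding-handle-coadjacent}(iii) optimality are unchanged.

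For part~(i): start with an optimal configuration of signature $((m_1,\ldots,m_k),s)$ in genus $g-1$. I need to produce two $4$-prong regions that become a single region after joining. The clean move in the \emph{opposite} direction (adding a handle produces $r_1+r_2+2$ prongs) suggests I should instead think of it as: take \emph{two} $4$-prong regions and join them, producing a $4+4+2 = 10$-prong region — that is not what I want. Instead, the correct reading is to realize the configuration $((m_1,\ldots,m_k),s)$ and add, inside any chosen complementary region, two small parallel $4$-prong regions and then join them. Concretely: pick a complementary region $R$ adjacent to a curve $\alpha$; inside $R$, introduce a pair of small curves forming a $(4,4)$-pattern with $R$ split up (this is a local genus-$0$ modification near $\alpha$), then join the two new $4$-prong regions with a handle. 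After the join the two fours merge into a region that I need to arrange to be $R$ again (i.e. to restore the original prong count of $R$), so the bookkeeping has to be: the handle join turns $4+4$ into $4+4+2-\text{(overlap)}$... the details of exactly which local picture gives back $R$ plus two fresh fours is the content of Figure~\ref{handle} and must be drawn carefully. The Euler characteristic check is automatic: adding a handle contributes $-2$ to $\chi$, and the signature change $(m_1,\ldots)\mapsto(4,4,m_1,\ldots)$ contributes $\tfrac{2-4}{2}+\tfrac{2-4}{2} = -2$, consistent with $2-2g = (2-2(g-1)) - 2$. Optimality is preserved because, as in Lemma~\ref{lem:adding-handle-coadjacent}(iii), genus and number of pairs both go up by one while the parity of all involved prong numbers ($4$ and $4$, both even) keeps $n_{\mathrm{odd}}$ unchanged; since $k_{g-1} = (g-1) + \tfrac12 n_{\mathrm{odd}}^{(s)} - \varepsilon$ with $\varepsilon = 1$ or $0$ according to $s$, the new $k_g = k_{g-1}+1 = g + \tfrac12 n_{\mathrm{odd}} - \varepsilon$ is again optimal.

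For part~(ii): start with an optimal \emph{nonorientable} configuration of signature $(1,m_1,\ldots,m_k)$ in genus $g-1$, and apply the same handle move, but this time to convert the $1$-prong region into a pair $(4,3)$: locally split the $1$-prong region near its adjacent curve into a $3$-prong and a $1$-prong region, then join the $1$-prong to another auxiliary region to promote it to $4$ prongs — or, more directly, arrange the local picture in Figure~\ref{handle} so that joining produces a $4$-prong and a $3$-prong from the original single $1$-prong (Euler check: $\tfrac{2-4}{2}+\tfrac{2-3}{2} = -1-\tfrac12 = -\tfrac32$, whereas removing the $1$-prong costs $-(\tfrac{2-1}{2}) = -\tfrac12$ back and a handle costs $-2$, so net $-\tfrac32$: consistent). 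Since a $3$-prong region is odd, $n_{\mathrm{odd}}$ is unchanged (we trade the odd $1$ for the odd $3$), so optimality is again preserved exactly as in Lemma~\ref{lem:adding-handle-coadjacent}(iii); and the configuration stays nonorientable since it was to begin with and the move preserves orientability.

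The main obstacle I anticipate is not the Euler-characteristic or optimality bookkeeping — those are forced — but rather verifying that the \emph{local model} near the chosen curve $\alpha$ genuinely realizes the claimed change of prong data with all curves in minimal position and the triangularity condition $I(a_i,b_j)=I(a_i,a_j)=I(b_i,b_j)=0$ for $j>i$ preserved after inserting the new pair at the correct end of the ordered list (beginning or end, according to whether $\alpha$ lies in some $a_i$ or some $b_i$, exactly as prescribed before Lemma~\ref{lem:adding-handle-coadjacent}). This is a picture-chasing argument rather than a computation, and the right statement is precisely what Figure~\ref{handle} is meant to encode; the proof reduces to pointing at that figure and checking the ordered-list placement makes all the required intersection numbers vanish.
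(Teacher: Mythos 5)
You have misread Lemma~\ref{lem:adding-handle-coadjacent}. You claim that joining two coadjacent regions with a handle ``merges two regions into one'' of $r_1+r_2+2$ prongs. In fact, item~(i) of that lemma states that after the move, \emph{each} of $R_1,R_2$ becomes a region with four more sides (``there is exactly one such for each $R_i$''); the two regions do \emph{not} merge. So an $r_i$-prong region becomes an $(r_i+2)$-prong region, and the signature gains $(r_1+2, r_2+2)$ in place of $(r_1, r_2)$. This is the whole point of the move, and your own bookkeeping contradicts it (you write ``each gaining four sides'' and then conclude there is one merged region). The Euler characteristic check alone cannot catch this error, since both readings shift $\sum\frac{2-m_i}{2}$ by $-2$, so it is essential to get the geometry right.

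Because of this misreading you miss the key idea the paper uses. For part~(i), the trick is to first \emph{double} a curve component $\alpha$ to a parallel copy $\alpha'$ (adding $\alpha'$ to the same multicurve $a_i$, so $k$ does not change); the annulus between them is tiled by 2-prong (rectangular) regions, which are invisible in the signature since 2's are omitted. Now join two of these coadjacent 2-prong rectangles by a handle; each becomes a 4-prong region, yielding the extra $(4,4)$ in the signature, with genus and $k$ each increased by one and $n_{\mathrm{odd}}$ unchanged, so optimality is preserved by Lemma~\ref{lem:adding-handle-coadjacent}(iii). For part~(ii), the same doubling trick creates a 2-prong region coadjacent to the existing 1-prong region; joining these two with a handle converts the 1-prong into a 3-prong and the 2-prong into a 4-prong, so $(1,\ldots)\mapsto(4,3,\ldots)$ with $n_{\mathrm{odd}}$ unchanged. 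Your proposal instead gestures at unspecified local modifications (``introduce a pair of small curves forming a $(4,4)$-pattern'', ``split the 1-prong region... then join the 1-prong to another auxiliary region'') that are not concrete constructions and would not in any case give the stated prong counts under the correct reading of the handle move. The bookkeeping paragraphs on optimality and orientability are fine, but the construction itself is the content of the proposition, and that is where the gap lies.
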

\begin{proof}
  We begin with (i). Here, we start by realising the signature
  $((m_1, \ldots, m_k),s)$ by an optimal configuration in genus
  $g-1$. Choose some component $\alpha$ of some $a_i$, and let $\alpha'$ be
  a curve parallel to $\alpha$ so that the annulus bounded by $\alpha$
  and $\alpha'$ is subdivided into 4-gons by the other curves. Add
  $\alpha'$ to the multicurve $a_i$ and then add a handle to two of
  the 4-gons in the annulus. These are coadjacent by construction and
  the effect is that the 4-gons (i.e. 2-prong regions) are replaced by
  8-gons (i.e. 4-pronged regions).

  For (ii), we argue similary, realising $(1, m_1, \ldots, m_k)$ in genus
  $g-1$, doubling a suitable curve, and joining a $1$--prong region to
  a $2$--prong region.
\end{proof}

\subsubsection*{Adding $1$-prongs}
Finally, we need a move that does not change the genus.
\begin{prop}[Remove 3,1]\label{prop:3-to-331}
  Suppose that $((1,m_1,\cdots,m_k),-)$ is a signature realized by an
  optimal collection of multicurves in genus $g$. Then the signature
  $((3,1,1,m_1,\cdots,m_k),-)$ is realized by an optimal collection in
  genus $g$.
                  \end{prop}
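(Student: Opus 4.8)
The plan is to produce the new signature $((3,1,1,m_1,\cdots,m_k),-)$ by a local surgery on an existing optimal collection realising $((1,m_1,\cdots,m_k),-)$, without changing the genus. The number of curves must increase by exactly one (since $k \mapsto k+1$, $g$ unchanged, and $n_{odd}$ goes from $1+(\#\text{odd among }m_i)$ to $3+(\#\text{odd among }m_i)$, so the optimality count $g-1+n_{odd}/2$ increases by one). So I expect to add a single pair of curves $(a_{k+1},b_{k+1})$ whose contribution changes one $1$-prong region into the cluster $\{3,1,1\}$, i.e.\ splits a $2$-gon complementary region into three regions that are a hexagon (the $3$-prong) and two new $2$-gons (the $1$-prongs), while leaving every other region untouched.

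Concretely, I would start from the optimal configuration for $(1,m_1,\dots,m_k)$ in genus $g$, and work inside the $1$-prong region $R$ (a bigon with one $\cup a_i$-side and one $\cup b_i$-side). Near $R$ I would insert a small new pair of curves: a curve $a_{k+1}$ that runs parallel to the existing $a$-side of $R$ for a while and then cuts across, and a curve $b_{k+1}$ likewise, arranged so that $a_{k+1}$ and $b_{k+1}$ intersect each other (giving $I(a_{k+1},b_{k+1})>0$) and so that neither of them meets any $a_i,b_i$ with $i\le k$ except possibly running parallel without crossing — in fact I want $I(a_{k+1},a_j)=I(a_{k+1},b_j)=I(b_{k+1},a_j)=I(b_{k+1},b_j)=0$ for $j\le k$, so that placing $(a_{k+1},b_{k+1})$ at the front of the ordered list (since it is the curves with the small index in the triangular ordering — or rather, since the condition is on $j>i$, at whichever end makes the triangular inequalities hold) keeps the configuration triangular. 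The picture to keep in mind is the same kind of local gadget used for the building block $(1^5,3)$ in Figure~\ref{blocks}: a small hexagonal region with three $1$-prong bigons budding off it, produced by two arcs crossing once. I would record the crossing types of this gadget and check triangularity exactly as in the genus-$0$ propositions. Orientability is preserved trivially in the sense that the output is required to be nonorientable and the presence of the odd ($1$- and $3$-prong) regions already forces $s=-$; but I should still check the new curves do not accidentally allow a coherent orientation — they don't, since the ambient configuration was already nonorientable.

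The verification then has two routine parts: (a) a region count showing every old region except $R$ survives and $R$ is replaced by exactly a $3$-prong plus two $1$-prongs (so the Euler characteristic bookkeeping $\sum\frac{2-m_i}{2}=2-2g$ is automatically maintained, since $\frac{2-1}{2}$ is replaced by $\frac{2-3}{2}+\frac{2-1}{2}+\frac{2-1}{2}=\frac12$ — wait, that's $-\frac12+\frac12+\frac12=\frac12$, consistent), and (b) the optimality count: $k+1 = g-1 + (n_{odd}+2)/2$, which is exactly $1$ more than the assumed $k = g-1+n_{odd}/2$. The main obstacle, and the only place real care is needed, is designing the local gadget so that the two new curves genuinely close up into simple closed curves on $S$ (not just arcs) while meeting \emph{only} the inside of the bigon $R$ and nothing else — i.e.\ arranging for $a_{k+1},b_{k+1}$ to be disjoint from all previous curves. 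One clean way to guarantee this: take $a_{k+1}$ to be a curve parallel to the $a$-curve $\alpha$ bounding $R$, take $b_{k+1}$ parallel to the $b$-curve $\beta$ bounding $R$, but push them so that their one essential crossing (which exists because $\alpha,\beta$ cross, hence nearby parallels cross) occurs inside $R$; then isotope so that the \emph{only} crossing between $a_{k+1}\cup b_{k+1}$ and everything, apart from the single $a_{k+1}\cap b_{k+1}$ point, lies in $R$. If parallel pushes create extra crossings outside $R$, one instead uses the annulus-subdivided-by-$4$-gons trick of Proposition~\ref{prop:handle-plus-22}(ii) to control them. Once the gadget is fixed, the triangular-ordering placement and the two counts above finish the proof.
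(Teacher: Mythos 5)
Your bookkeeping is right and so is the high-level intuition (local surgery in the $1$-prong bigon $R$, one new pair of curves, Euler characteristic and optimality counts check out), but your actual geometric gadget fails the triangularity condition, and this is exactly the hard point the paper's construction is designed to solve.

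Here is the problem. You propose $a_{k+1}$ parallel to the $a$-curve $\alpha$ bounding $R$ and $b_{k+1}$ parallel to the $b$-curve $\beta$ bounding $R$. Then $I(a_{k+1}, b_j) = I(\alpha, b_j)$ for every $j$, and $I(b_{k+1}, a_j) = I(\beta, a_j)$ for every $j$: these are essential intersections and cannot be isotoped away, so the hoped-for condition that $a_{k+1},b_{k+1}$ are disjoint from all earlier curves except inside $R$ is not achievable. Worse, this forces a contradiction with the upper-triangular ordering. Say $\alpha$ belongs to $a_p$ and $\beta$ to $b_q$; since $\alpha$ and $\beta$ intersect (they bound $R$), triangularity of the original configuration gives $q \le p$. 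To insert the new pair at a position $m$, triangularity requires $a_{k+1}$ to be disjoint from $b_m,\ldots,b_k$ and $b_{k+1}$ to be disjoint from $a_1,\ldots,a_{m-1}$; since $a_{k+1}\parallel\alpha$ intersects $b_p$ you need $m>p$, and since $b_{k+1}\parallel\beta$ intersects $a_q$ you need $m\le q$. Together these give $p<q$, contradicting $q\le p$. So there is no valid insertion point for this gadget. Your last sentence hedges (``one instead uses the annulus-subdivided-by-$4$-gons trick... to control them''), but that is the actual content of the proof and you do not carry it out.

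The paper's construction gets around precisely this: it first \emph{doubles} one of the curves bounding $R$, i.e.\ adds a parallel copy $\alpha'$ to the \emph{existing} multicurve $a_p$ (this does not add a new pair and so does not disturb the triangular ordering; it simply cuts $R$ into a rectangle and a smaller bigon, and cuts the old regions along $\alpha$ into $4$-gons plus what was there). Only then does it add a single genuinely new curve pair, built to intersect one of the doubled curves and to produce two new bigons in place of the old one, which achieves the $1\mapsto 3,1,1$ replacement while keeping all new intersections compatible with the triangular ordering (see Figure~\ref{1to311}). So the doubling is not an optional patch; it is the step that makes the triangularity work, and it is what your proposal is missing.
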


\begin{proof}
          Consider the $1$--prong region. Double one of the curves to
          decompose it into a rectangle and bigon. Now add a new curve
          pair which intersects one of the doubled curves and generates 2 bigons; compare
          Figure~\ref{1to311}.

          \begin{figure}[ht]
            \includegraphics[scale=0.8]{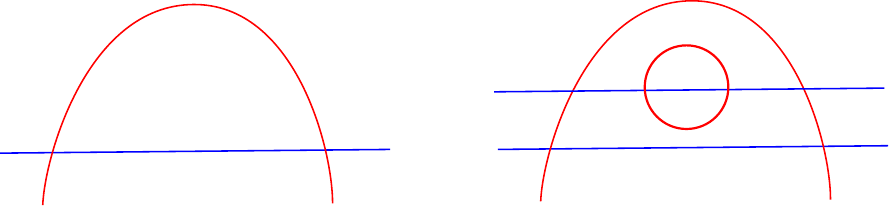}
            \caption{Changing a $1$--prong region to three regions with $3,1,1$ prongs.}
            \label{1to311}
          \end{figure}
          
        \end{proof}

\subsubsection*{Maximal Cases}
Finally, we need to realise some cases by hand in every genus:
\begin{prop}[Maximal Case]\label{prop:maximal-case}
  In every genus $g \geq 2$, the (nonorientable) signatures
  \[ (3^{4g-4}) \quad\mbox{and}\quad (1,3^{4g-3}) \] 
  bound are realised.
\end{prop}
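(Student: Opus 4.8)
The plan is to realise both signatures by an explicit construction, because none of the moves of this section can reduce them: $(3^{4g-4})$ and $(1,3^{4g-3})$ contain no region with four or more prongs, so neither ``Subtract $4$'' (Proposition~\ref{prop:handle-plus-4}) nor ``Remove Fours'' (Proposition~\ref{prop:handle-plus-22}) can be the last step of a reduction, and ``Remove $3,1$'' (Proposition~\ref{prop:3-to-331}) introduces two $1$--prong regions, so it cannot be the last step either. I would therefore argue by induction on $g$, taking $g=2$ as the base case and, at each step, inserting a handle next to a $3$--prong region.

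For $g=2$ the two signatures are $(3^4)$ (three curve pairs on the genus $2$ surface whose complement is four hexagons, so $k=g-1+n_{odd}/2=3$) and $(1,3^5)$ (four pairs whose complement is one bigon and five hexagons, so $k=4$); both must be drawn by hand. A convenient way is to start from the genus-$0$ building block $(1^5,3)$ and glue it to itself along two crossings, in the style of the surgeries of this section, to create a handle, and then place the remaining pair(s) inside a hexagon. One then checks: the curves are transverse, pairwise distinct and fill (parallels and bigons being permitted here); the complementary polygons are as claimed; the configuration is nonorientable, which is automatic since odd regions are present; the inequalities $I(a_i,b_j)=I(a_i,a_j)=I(b_i,b_j)=0$ for $j>i$ and $I(a_i,b_i)>0$ hold for a suitable ordering (using that the condition is only ``lower triangular'', so later curves may meet earlier $b$'s); and optimality, which here reads $3=1+2$ and $4=1+3$.

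For the inductive step I would take an optimal configuration realising $(3^{4g-4})$ (resp. $(1,3^{4g-3})$) in genus $g$, choose a $3$--prong region $R$, and perform a modification supported in a neighbourhood of $R$: drill a handle and add three new curve pairs so that $R$ is replaced by four $3$--prong regions and the rest of the configuration is untouched. This increases the genus by $1$, adds four odd regions, and adds three pairs, i.e. it is exactly the passage $(3^{4g-4})\to(3^{4(g+1)-4})$ (resp. $(1,3^{4g-3})\to(1,3^{4(g+1)-3})$); optimality then follows from $3g=(g+1)-1+\tfrac12\cdot 4g$ and $3g+1=(g+1)-1+\tfrac12(4g+2)$, nonorientability is again forced by the odd regions, and the new pairs can be appended to the triangular order since they lie near $R$. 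The combinatorial core of the step is the ``gadget'': a filling collection of three curve pairs on a once-holed torus whose complement, together with the six boundary arcs inherited from $\partial R$, is four hexagons glued consistently with the side-pattern of $\partial R$. The main obstacle is precisely this hands-on content --- producing the genus-$2$ pictures and the genus-$1$ gadget, verifying all the vanishing geometric intersection numbers and the exact cyclic side-patterns of the new hexagons, and arranging the new pairs in a valid triangular order --- none of which is conceptually deep but all of which needs careful pictures and bookkeeping.
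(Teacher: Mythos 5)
Your approach is genuinely different from the paper's, and it contains both an arithmetic gap and a more serious incompleteness.

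\textbf{Different route.} The paper does not do induction on genus at all. It takes a pants decomposition $P=\{a_1,\dots,a_{3g-3}\}$ and constructs a second pants decomposition $Q=\{b_1,\dots,b_{3g-3}\}$ by a chain of elementary moves in the pants complex, one move per curve, replacing $a_i$ by $b_i$ in turn. Because at the stage when $b_i$ is introduced the ambient decomposition is $\{b_1,\dots,b_{i-1},a_i,\dots,a_{3g-3}\}$, the new curve $b_i$ is disjoint from all later $a_j$ (and all earlier $b_j$), and intersects $a_i$; after reversing the order of the pairs this is precisely the triangular condition. Each pair of pants in the complement of $P$ is then cut by $Q$ into two hexagons (and quadrilaterals), giving $2(2g-2)=4g-4$ hexagons. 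The $(1,3^{4g-3})$ case is the same construction on a once-punctured surface, forgetting the puncture at the end. The whole thing is uniform in $g$, with no base case and no gadget to draw. Your plan replaces this with an induction on $g$ plus a hand-built genus-$2$ base case and a local ``gadget''; that could in principle work, but it trades a one-shot construction for two unconstructed pictures.

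\textbf{Arithmetic gap in the inductive step.} You write that the gadget has $R$ ``replaced by four $3$-prong regions'' and that this ``adds four odd regions''; these are incompatible (replacing $1$ by $4$ adds only $3$). The correct count is forced by Euler characteristic: passing from $(3^{4g-4})$ in genus $g$ to $(3^{4g})$ in genus $g+1$ must add exactly four hexagons, so if the gadget destroys $R$ it must produce \emph{five} hexagons in the modified region, not four. Equivalently, $\sum\frac{2-m_i}{2}$ must drop by $2$, and $5\cdot(-\tfrac12)-(-\tfrac12)=-2$ while $4\cdot(-\tfrac12)-(-\tfrac12)=-\tfrac32$. Your described once-holed-torus gadget ``whose complement\dots is four hexagons'' therefore cannot exist as stated. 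This is the same kind of parity/Euler bookkeeping that drives the rest of Section~\ref{sec:curves}, so the slip is worth flagging.

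\textbf{Incompleteness.} Even after fixing the count to five hexagons, both the genus-$2$ base configurations and the five-hexagon gadget are asserted, not constructed, and you acknowledge this is the real content. The paper's pants-decomposition argument is exactly what supplies that content in one stroke; without an analogue, your proposal is an outline rather than a proof.
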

\begin{proof}
  We start with the closed case. Begin with a pants decomposition
  $P=\{a_1, \ldots, a_{3g-3}\}$, and set $Q_0 = P$. Now,
  modify $Q_0$ inductively. Let $Q_1=\{b_1,a_2,\ldots,a_{3g-3}\}$ be an adjacent
  pants decomposition in the pants complex. If
  $Q_i=\{b_1,\cdots,b_i,a_{i+1},\cdots,a_{3g-3}\}$, let
  $Q_{i+1}=\{b_1,\cdots,b_{i+1},a_{i+2},\cdots,a_{3g-3}\}$ be adjacent
  to $Q_i$. Then $P$ and $Q_{3g-3}$ are the desired multicurves. Each
  pair of pants in the complement of the curves in $P$ is cut by the
  curves in $Q$ into two hexagons and a number of quadrilaterals.

  The case of a single $1$ is proved in the same way, starting with a
  pants decomposition of a once-punctured surface, and then filling in
  the resulting puncture.
\end{proof}

\subsection{Genus 1}
We are now ready to study genus $1$. The Euler characteristic
constraint here is that the sum of $\frac{2-m_i}2$ over all
$m_i$-prongs should be $0$.

\begin{thm}\label{thm:genus1}
  Any signature in genus $1$ 
  except for $(\emptyset,-)$ and $(1,3)$ is realised
  by an optimal configuration.
\end{thm}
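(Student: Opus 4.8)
The plan is to realize every genus-$1$ signature by pushing it, through the moves established in this section, either down to genus $0$ (where Theorem~\ref{genus 0} applies) or to a simpler genus-$1$ signature, leaving only a short list to be realized by explicit pictures on the torus.

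First I would sort out which signatures occur. The Euler constraint in genus $1$ is $\sum\frac{2-m_j}{2}=0$. An orientable collection has no odd region, so every listed $m_j$ is even, hence $\geq 4$, and the sum is then strictly negative unless the list is empty; thus $(\emptyset,+)$ is the only orientable signature, and it is realized by the standard pair of curves on the torus meeting once (complement: a single square). For a nonorientable signature, collect the entries $\geq 3$ as $p_1,\dots,p_r$; the constraint then forces exactly $q:=\sum_j(p_j-2)$ one-prong regions, so the signature is $(p_1,\dots,p_r,1^q)$, and excluding $(\emptyset,-)$ and $(1,3)$ means $r\geq 1$ and $(r,p_1)\neq(1,3)$.

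Next I would run the reduction, distinguishing cases by the $p_j$. If some $p_j\geq 5$: replacing $p_j$ by $p_j-4$ yields a genus-$0$ signature realized by Theorem~\ref{genus 0}, and Proposition~\ref{prop:handle-plus-4} (adding a handle inside a $(p_j-4)$-prong region) produces the target. If all $p_j\in\{3,4\}$: when at least two equal $4$, delete two $4$'s to reach a genus-$0$ signature and apply Proposition~\ref{prop:handle-plus-22}(i); when exactly one equals $4$ and at least one equals $3$, delete a $4$ and a $3$ and insert a $1$ to reach a genus-$0$ signature and apply Proposition~\ref{prop:handle-plus-22}(ii). Finally, if all $p_j$ equal $3$ the signature is $(3^r,1^r)$; for $r\geq 3$, reading it as $(3,1,1,3^{r-1},1^{r-2})$ and applying Proposition~\ref{prop:3-to-331} in reverse reduces it to $(3^{r-1},1^{r-1})$, and one induts down to $(3^2,1^2)$. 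What remains is $(\emptyset,+)$ (already done), $(4,1,1)$ (the unique nonorientable signature with a single $4$ and no $3$), and $(3^2,1^2)=(3,3,1,1)$; these last two I would build by hand on the torus — $(4,1,1)$ from a single pair whose complement is one octagon, two bigons and some squares, and $(3,3,1,1)$ from two pairs whose complement is two hexagons, two bigons and some squares — checking in each case that the resulting collection is triangular, filling and optimal.

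The main obstacle is exhibiting these two explicit torus configurations with exactly the prescribed complementary regions (the triangularity and optimality bookkeeping along every reduction is immediate from the lemmas of this section). A secondary subtlety is that Proposition~\ref{prop:handle-plus-4} requires the genus-$0$ input to contain a complementary region of the prong type the handle is cut from; this is automatic, since the building blocks and all their connect sums contain squares and any genus-$0$ signature listing a $1$ has a bigon, with the single exception of the target $(6,1^4)$, which would need the genus-$0$ signature $(1^4)$ to contain a square. For that case I would realize $(1^4)$ not by two great circles but by two simple closed curves on the sphere meeting in four points (complement: four bigons and two squares), and cut the handle out of one of those squares.
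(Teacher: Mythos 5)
Your proof follows the same reduction strategy as the paper: handle $(\emptyset,+)$ directly, use Proposition~\ref{prop:handle-plus-4} when some $p_j\geq 5$, use Proposition~\ref{prop:handle-plus-22} when there is a $4$, use Proposition~\ref{prop:3-to-331} for the all-$3$ case, and realize $(1,1,4)$ and $(1,1,3,3)$ by explicit pictures on the torus — exactly the cases the paper does by hand (Figures~\ref{fig:114} and~\ref{1133}). One point of difference: you flag the case $(6,1^4)$, where Proposition~\ref{prop:handle-plus-4} asks you to add a handle inside a $2$-prong region of a genus-$0$ realization of $(1^4)$, and two great circles have no such region; your fix (realizing $(1^4)$ with four intersection points, or equivalently doubling a curve to create squares) plugs a small gap the paper passes over silently, and is worth making explicit. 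Otherwise the argument is the same.
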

\begin{proof}
  The signature $(\emptyset,+)$ is realized by the meridian and the
  longitude. The other signatures we need to realize are nonorientable.
  First suppose that the list contains at least one entry $>4$. In
  that case, Proposition~\ref{prop:handle-plus-4} and
  Theorem~\ref{genus 0} imply that the list is possible in genus
  $1$. We may thus assume that every entry is $4,3$ or $1$.
		
  Next, suppose that the signature contains at least two entries equal to
  $4$. In this case Proposition~\ref{prop:handle-plus-22} and
  Theorem~\ref{genus 0} imply that the signature is realized in genus $1$.

  Suppose that the signature contains a single $4$. If it also contains a $3$,
  then Proposition~\ref{prop:handle-plus-22} and Theorem~\ref{genus 0}
  again imply that the list is possible in genus $1$. The case of
  $(1,1,4)$ we do by hand, see Figure~\ref{fig:114}.
 \begin{figure}[h]
   \includegraphics[scale=0.5]{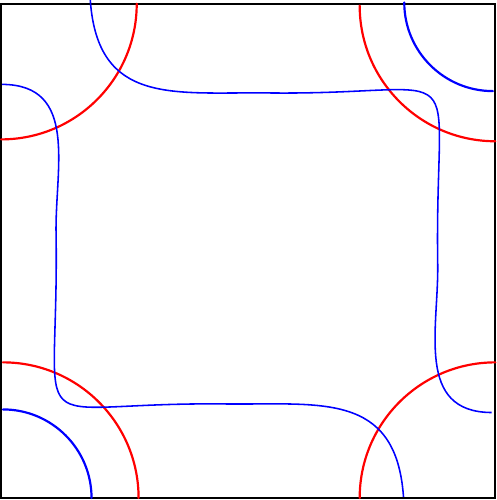}
   \caption{Realising $(1,1,4)$ on the torus}
    \label{fig:114}
  \end{figure}
  
  We are thus left with the case where the signature only contains $3$ and
  $1$ as entries. Note that both types need to appear with the same
  multiplicity by the Euler
  characteristic condition.  Since the signature $(1,3)$ is explicitly excluded,
  we will be done by induction by
  Proposition~\ref{prop:3-to-331} and Theorem~\ref{genus 0} if we can
  establish the base case that $(1,1,3,3)$ is realized. This is
  accomplished by the same procedure as in
  Proposition~\ref{prop:3-to-331} but starting with the meridian and
  the longitude, see Figure \ref{1133}.
\end{proof}

\begin{figure}[h]
   \includegraphics[scale=0.5]{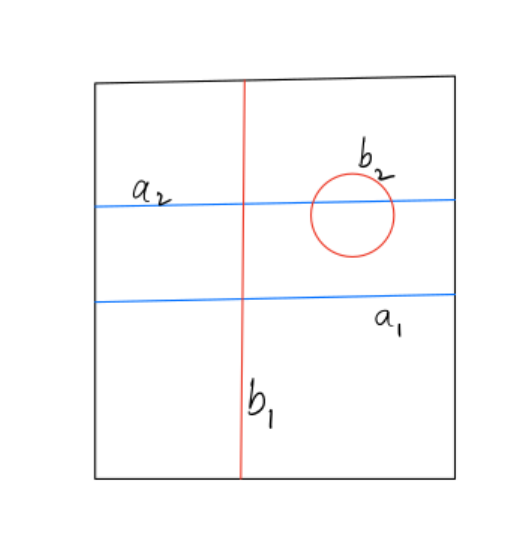}
   \caption{Realising $(1,1,3,3)$ on the torus}
    \label{1133}
  \end{figure}
        
\subsection{Genus 2}
\begin{thm}
  Every nonorientable signature in genus $2$ which satisfies the
  Euler characteristic condition except $(3,5)$ and $(6)$ is realised by an
  optimal configuration.
\end{thm}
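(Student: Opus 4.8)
The plan is to follow the template of the proof of Theorem~\ref{thm:genus1}: given a nonorientable signature $(R,-)$ in genus $2$ satisfying the Euler-characteristic constraint $\sum\frac{2-m_i}{2}=-2$, I would reduce it, by the moves of this section, to a signature in genus $1$ realised by Theorem~\ref{thm:genus1} (which itself feeds back to the genus-$0$ case), and then treat by hand the finitely many cases for which every such reduction is blocked. The feature new compared with genus $1$ is that a reduction only succeeds when the genus-$1$ signature it produces is different from the two signatures $(\emptyset,-)$ and $(1,3)$ excluded by Theorem~\ref{thm:genus1}; bookkeeping of these failures is exactly what isolates the exceptional cases.

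First, signatures with a large prong number. If $R$ has an entry $m>4$, subtract $4$ from it by Proposition~\ref{prop:handle-plus-4} and apply Theorem~\ref{thm:genus1} to the resulting genus-$1$ signature. Scanning the Euler-characteristic constraint, the genus-$1$ target is one of the two excluded signatures exactly when $R=(6)$ or $R=(3,5)$ (these being genuinely impossible, as explained in the Remark after Theorem~\ref{thm:existence-triangular-patterns}) or when $R=(1,7)$ or $R=(1,3,6)$, which I set aside for now. So from now on every entry of $R$ lies in $\{1,3,4\}$.

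Next I mimic the genus-$1$ argument on these signatures. If $R$ has at least two $4$'s, Proposition~\ref{prop:handle-plus-22}(i) removes a pair of them and lands in genus $1$; this fails only for $R=(4,4)$ (where $n_\mathrm{odd}=0$, so $k=1$ and we want a single nonorientable filling pair with two octagonal complementary regions) and for $R=(4,4,1,3)$, the latter of which I would instead realise via Proposition~\ref{prop:handle-plus-22}(ii) starting from the genus-$1$ signature $(1,1,4)$. If $R$ has exactly one $4$, the Euler constraint forces at least two $3$'s, so $R=(4,3,\dots)$ and Proposition~\ref{prop:handle-plus-22}(ii) reduces it to a genus-$1$ signature starting with $1$; this never produces $(\emptyset,-)$ and produces $(1,3)$ only for $R=(3,3,4)$, again set aside. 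Finally, if $R$ consists only of $1$'s and $3$'s, the Euler constraint says it has four more $3$'s than $1$'s; the two base cases $(3^4)$ and $(1,3^5)$ come from Proposition~\ref{prop:maximal-case}, and every remaining such $R$ is obtained from $(1,3^5)$ by iterating Proposition~\ref{prop:3-to-331}.

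This leaves the four exceptional signatures $(4,4,-)$, $(3,3,4)$, $(1,7)$, $(1,3,6)$, each of which has $k\le 2$ and so is realised by at most two pairs of single curves; I expect to produce an explicit picture of each on the closed genus-$2$ surface, together with the routine but fiddly verification that the configuration is triangular, non-orientable, optimal, and has exactly the prescribed complementary-region data, remembering to account for the auxiliary $2$-prong regions. I expect these explicit constructions, rather than the reductions, to be the bulk of the work; the other place that needs care is checking that the Euler-characteristic case split is genuinely exhaustive and that each invoked reduction really applies — in particular that the configuration being modified does contain a complementary region of the type needed to attach a handle to.
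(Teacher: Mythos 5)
Your proposal is correct and follows essentially the same reduction strategy as the paper: reduce to genus~$1$ via the handle moves, case-split on the largest prong value, and handle the finitely many blocked reductions by hand. In fact your bookkeeping is more careful than what appears in the paper. When reducing signatures with an entry $>4$ by Proposition~\ref{prop:handle-plus-4}, you correctly flag \emph{four} problematic signatures, namely $(3,5)$, $(6)$, $(1,7)$, and $(1,3,6)$ (the last reducing to the forbidden genus-$1$ signature $(1,3)$ after subtracting $4$ from the $6$), whereas the paper's published proof lists only the first three and overlooks $(1,3,6)$. That signature \emph{is} realisable, and your plan of producing an explicit $k=2$ picture would work; a leaner fix in the paper's own idiom is to apply Lemma~\ref{lem:adding-handle-coadjacent} to the genus-$1$ configuration realising $(1,1,4)$ (Figure~\ref{fig:114}), joining a $1$-prong region to the $4$-prong region: this leaves the other $1$-prong untouched and turns the joined regions into a $3$-prong and a $6$-prong, yielding $(1,3,6)$ in genus~$2$ --- the same trick the paper itself uses for $(3,9)$ in genus~$3$. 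The rest of your analysis --- $(4,4,-)$ and $(3,3,4)$ by hand, $(1,3,4,4)$ via Proposition~\ref{prop:handle-plus-22}(ii) applied to $(1,1,4)$, and $(1^n,3^{n+4})$ via Propositions~\ref{prop:maximal-case} and~\ref{prop:3-to-331} --- matches the paper's argument case by case.
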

\begin{proof}
  We argue similarly to the torus case.  First suppose that
  the signature contains at least one entry $>4$. In that case,
  Proposition~\ref{prop:handle-plus-4} and
  Theorem~\ref{thm:genus1} imply that the list is possible in
  genus $2$, unless the original list was $(3,5)$, $(6)$ or $(1,7)$. The
  first two we excluded explicitly. The last one is realised by hand in 
  Figure~\ref{71}.
  \begin{figure}[h]
    \includegraphics[scale=0.6]{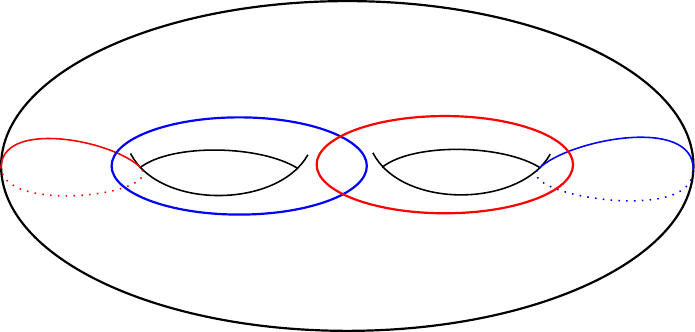}
    \caption{}
    \label{71}
  \end{figure}
  We can thus assume that the signature contains only entries $4,3$ and $1$.
  
  Next, suppose that the signature contains at least two entries equal to
  $4$. In this case Proposition~\ref{prop:handle-plus-22} and Theorem~
  \ref{thm:genus1} imply that the signature is possible in genus $1$,
  unless it is $(4,4)$ or $(1,3,4,4)$. The latter can be
  realized from $(1,1,4)$ using Proposition \ref{prop:handle-plus-22}.

  Similarly, if the signature contains both a $4$ and a $3$, then
  Proposition~\ref{prop:handle-plus-22} and Theorem~\ref{thm:genus1}
  imply that it is possible to realize it in genus $2$, unless it is
  $(3,3,4)$.

  The remaining possibility is that the signature contains only 1's and
  3's. Thus it has the form $(1^n,3^{n+4})$. 
    Proposition~\ref{prop:maximal-case} completes the argument when
    $n=0$ or $n=1$, and the case $n>1$ follows inductively from
    Proposition~\ref{prop:3-to-331}.

    It remains to realize $(4,4)$ and $(3,3,4)$.

    {\it $(3,3,4)$.} Here we view the surface of genus 2 as the double
    branched cover branched over 6 points. The multicurves are the
    preimages of the curves pictured in
    Figure \ref{334}.
\begin{figure}[h]
    \includegraphics[scale=1]{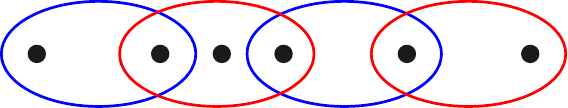}
    \caption{An optimal configuration for $(3,3,4)$ as a double
      branched cover of $S^2$ branched over 6 points.}
    \label{334}
\end{figure}
The outside component lifts to two 3's and the
region containing point 3 lifts to a 4.

{\it $(4,4)$.} Start with the meridian and the longitude on the torus
and remove a disk around the intersection point. This results in two
disjoint arcs on a genus 1 surface with boundary $b$. Now double to
get a genus 2 surface, with the curve $b$ and two curves representing
$a$. 

\end{proof}
	
For the orientable configurations, we need
\[ \sum \frac{2-m_i}{2} = -2, \] and all $m_i \geq 4$, and
therefore the only possibilities are $(4,4)$ and $(6)$ and they both
follow from Proposition \ref{prop:handle-plus-22}

\subsection{Genus 3}	
\begin{thm}
  Every signature in genus $3$ which satisfies the Euler characteristic condition is realised by an optimal configuration.
\end{thm}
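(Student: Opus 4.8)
The plan is to run the same inductive case analysis as in genus $1$ and $2$, now with the genus~$2$ theorem as the main base of the induction, together with the genus~$0$ theorem (Theorem~\ref{genus 0}), the Maximal Case configurations of Proposition~\ref{prop:maximal-case}, and a few configurations built by hand. As in Section~\ref{sec:curves} we assume the signature $(R,s)$, $R=(m_1,\dots,m_N)$, satisfies the Euler characteristic constraint $\sum(m_i-2)=8$ and the parity constraint, and we must produce an optimal triangular configuration realising it.

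\emph{Nonorientable case.} I would argue by the size of the largest entry. If some $m_i>4$, apply Proposition~\ref{prop:handle-plus-4} to reduce $m_i$ by $4$, landing on a genus~$2$ signature which again satisfies the Euler characteristic and parity constraints; by the genus~$2$ theorem this is realised by an optimal configuration unless it is one of the two excluded genus~$2$ signatures $(3,5)$ or $(6,-)$. A short bookkeeping check shows that this obstruction is unavoidable only for $R\in\{(3,9),(10,-),(6,6,-)\}$; in all other cases (for instance $(7,5)$, or $(6,3,5)$) one instead subtracts $4$ from a different entry and lands on a realisable genus~$2$ signature such as $(1,7)$. If instead all $m_i\le4$, then every entry lies in $\{1,3,4\}$, and the Euler characteristic constraint forces $R=(1^a,3^{a+8})$ when no entry equals $4$, $R=(4,4,\dots)$ with the remaining entries again in $\{1,3,4\}$ when at least two entries equal $4$, and $R=(4,3,\dots)$ with the remaining entries in $\{1,3\}$ when exactly one entry equals $4$. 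In the first subcase we use Proposition~\ref{prop:maximal-case} for $a=0,1$ and induct on $a$ via Proposition~\ref{prop:3-to-331}; in the second we remove two $4$'s by Proposition~\ref{prop:handle-plus-22}(i); in the third we remove a $4$ and a $3$ by Proposition~\ref{prop:handle-plus-22}(ii). In each of these the resulting genus~$2$ signature has all entries in $\{1,3,4\}$, hence is not an exception, so the genus~$2$ theorem applies.

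\emph{Orientable case.} The parity constraint forces all $m_i$ even, so the Euler characteristic constraint leaves only the finite list $(4,4,4,4)$, $(4,4,6)$, $(6,6)$, $(4,8)$, $(10)$, all with $s=+$. Each of these is obtained from the genus~$2$ configurations realising $(4,4,+)$ or $(6,+)$ by a single move: $(4,4,4,4)$ and $(4,4,6)$ by Proposition~\ref{prop:handle-plus-22}(i), and $(6,6)$, $(4,8)$, $(10)$ by Proposition~\ref{prop:handle-plus-4} applied to a quadrilateral complementary region, a $4$-prong region, or the $6$-prong region respectively. For $(6,6)$ one first arranges, by adjoining a parallel pair of curves, that the genus~$2$ configuration for $(6,+)$ has a spare quadrilateral region in which to add the handle.

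\emph{Main obstacle.} The heart of the matter is the three signatures $(3,9)$, $(10,-)$ and $(6,6,-)$, which the reductions above only ever send to the forbidden genus~$2$ signatures; these are precisely the cases in which the genus~$\le 2$ hyperelliptic obstruction would operate, so the double-branched-cover constructions available in genus~$2$ (as for $(3,3,4)$) cannot be used here --- in particular a single odd-prong region such as the $3$ in $(3,9)$ cannot appear in a hyperelliptic configuration. I therefore expect $(3,9)$ and $(10,-)$ to be realised by explicit, genuinely non-symmetric curve or train-track pictures on the closed genus~$3$ surface, and $(6,6,-)$ to be obtained from the genus~$2$ configuration realising $(4,4,-)$ by joining its two $4$-prong regions --- which one checks are coadjacent --- with a handle using Lemma~\ref{lem:adding-handle-coadjacent}. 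Once these few pictures are in hand, verifying that every configuration produced is triangular and optimal is routine: by construction the complementary regions have the prescribed prong numbers, and optimality reduces to the numerical identity $k=g-1+n_{\mathrm{odd}}/2$, which all the moves of Section~\ref{sec:curves} preserve.
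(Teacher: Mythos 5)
Your strategy is essentially identical to the paper's: induct from genus $\le 2$ via Propositions~\ref{prop:handle-plus-4}, \ref{prop:handle-plus-22}, \ref{prop:3-to-331} and \ref{prop:maximal-case}, treating the few irreducible signatures by hand. Your orientable enumeration is complete, and the reductions you list match the paper's.

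One thing you do more carefully than the paper: the paper's exception list for the subtract-4 step is $(5,7)$, $(3,9)$, $(10)$, but as you correctly observe, $(6,6,-)$ also only ever reduces to the excluded $(6,-)$ no matter which entry one subtracts from, so it must be handled separately (and no other move of Section~\ref{sec:curves} applies to it). Your fix is correct: in the paper's genus-2 realisation of $(4,4,-)$ the two 4-prong octagons are the only complementary regions and are both adjacent to the boundary curve, hence coadjacent, so Lemma~\ref{lem:adding-handle-coadjacent} turns it into an optimal nonorientable $(6,6)$ in genus~3.

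The genuine gap in your proposal is $(3,9)$ and $(10,-)$, which you leave to unsupplied "explicit, genuinely non-symmetric" pictures. Only $(10,-)$ actually requires such a picture --- the paper supplies one in Figure~\ref{10}. For $(3,9)$ no new construction is needed: in the genus-2 realisation of $(1,7)$ from Figure~\ref{71} the 1-prong and 7-prong regions are coadjacent, and Lemma~\ref{lem:adding-handle-coadjacent} converts that configuration directly into a genus-3 realisation of $(3,9)$. So you missed an available cheap reduction for $(3,9)$, and, more importantly, since no move in Section~\ref{sec:curves} can reach $(10,-)$ from a realisable lower-genus signature, the absence of any construction for it leaves the proof incomplete.
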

\begin{proof}
  We follow our usual strategy. If at least one entry is at least $5$,
  subtracting $4$ from the largest entry works (see Proposition
  \ref{prop:handle-plus-4}) except in the cases $(5,7)$, $(3,9)$ and
  $(10)$ because that would reduce us to impossible configurations
  $(3,5)$ and $((6),-)$. The case $(5,7)$ reduces to $(1,7)$
  instead. By inspection, regions with 1 and 7 prongs are coadjacent
  in Figure \ref{71} so we can use Lemma
  \ref{lem:adding-handle-coadjacent} to reduce $(3,9)$ to
  $(1,7)$. Finally, $((10),-)$ is pictured in Figure \ref{10}.

  \begin{figure}[h]
    \includegraphics[scale=0.6]{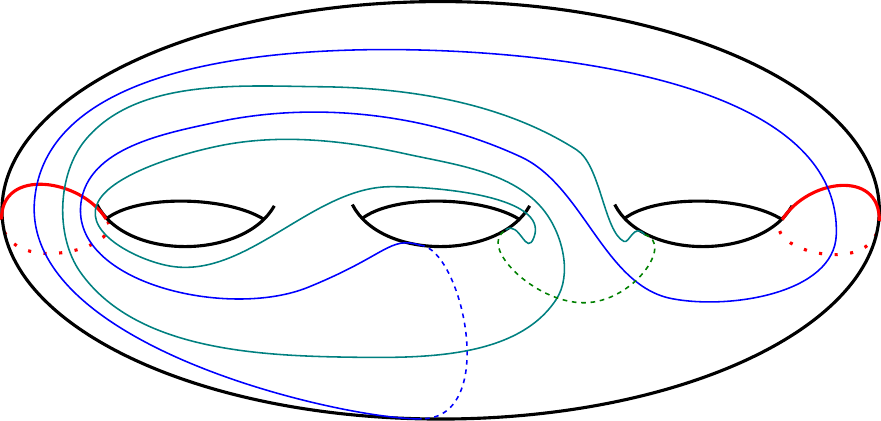}
    \caption{A nonorientable configuration with a single (10-prong) complementary component in genus $3$. The configuration is nonorientable since a blue and red curve intersect with both signs.}
    \label{10}
\end{figure}
		
  Now suppose all entries are $\leq 4$. Our reductions will not run
  into impossible configurations, since all of them have an entry
  $>4$. If there are two 4s, we can remove them, and if there is a $3$
  and a $4$, we can replace them by 1, see Proposition
  \ref{prop:handle-plus-22}. Finally, if all numbers are 1 or 3, we
  are done by Proposition~\ref{prop:maximal-case} and
  Proposition~\ref{prop:3-to-331}. 
\end{proof}

\subsection{Genus $>3$.} We argue by induction on the genus.
  As there are no impossible
  configurations in genus $3$, there are no exceptions any more in the
  previous argument.

\section{Realising Measures}
\label{sec:train-tracks}
In this section, we explain how to pass from triangular configurations
to laminations with a given number of measures. More precisely, we will prove
\begin{thm}\label{thm:matrices}
  Suppose that $((a_i, b_i)), i=1,\ldots, k$ is a optimal triangular
  configuration on a surface $S$ of genus $g$.

  Then on $S$ there is a geodesic lamination $\lambda$ which supports
  $k$ different ergodic transverse measures. Moreover, $\lambda$ is
  carried by the train track $\tau$ obtained from the multicurves by
  smoothing all intersections so that every $a_i$ turns right and
  every $b_j$ turns left, and then collapsing the bigons.

  In particular, $\lambda$ is orientable, if the triangular
  configuration is orientable.
\end{thm}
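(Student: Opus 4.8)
The plan is to realize $\lambda$ as the limit of a carefully engineered splitting sequence issuing from the train track $\tau$ in the statement, and to extract $k$ mutually singular ergodic measures on it from the triangular structure of the configuration; combined with the upper bound of Theorem~\ref{thm:upper-bound} and optimality, this pins the number of ergodic measures to exactly $k$.

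First I would set up $\tau$ and record its combinatorics. Smoothing every crossing of $\bigcup a_i\cup\bigcup b_i$ with the prescribed convention (each $a_i$ turning right, each $b_j$ turning left) produces a picture with bigon complementary regions; collapsing these bigons yields a trivalent train track $\tau$. Since the curves fill $S$ and form recurrent multicurves, $\tau$ is filling and recurrent, and it carries every $a_i$ and $b_i$. Each complementary $2m$--gon of the configuration becomes an $m$--pronged complementary region of $\tau$, so $\tau$ has exactly $n_{\mathrm{odd}}$ odd complementary regions; and $\tau$ is orientable precisely when the configuration is, taking the orientations realizing the common crossing sign and noting the smoothing respects them. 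In particular $\tau$ satisfies the hypotheses of Theorem~\ref{thm:upper-bound}.

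Next comes the construction of $\lambda$. Working in the fibered neighbourhood of $\tau$, I would build a sequence of weighted multicurves $\gamma_n$ carried by $\tau$, where $\gamma_{n+1}$ is obtained from $\gamma_n$ by a partial--twist/splitting move associated to the pair $(a_i,b_i)$ with $i\equiv n\pmod k$, the amount of twisting governed by a rapidly (doubly--exponentially) increasing sequence of parameters; this is where the explicit growth and separation constants are chosen. The triangular conditions $I(a_i,a_j)=I(b_i,b_j)=0$ and $I(a_i,b_j)=0$ for $j>i$ ensure that the move at index $i$ disturbs only the part of the picture seen by $a_i$ together with $b_1,\dots,b_i$, so the moves interact in a lower--triangular, controlled fashion and one checks that the resulting splitting sequence contains no central splits. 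The $\gamma_n$ converge to a lamination $\lambda$ carried by $\tau$; choosing the parameters to grow fast enough makes $\lambda$ minimal and filling, and since its splitting sequence has no central splits it is tightly carried by $\tau$. For each $i$, restricting to the subsequence of stages where index $i$ is active and using that $a_i$ is disjoint from all later curves, the normalized weights of $a_i$ pushed along the splitting sequence converge to a transverse measure $\mu_i$ supported on $\lambda$; the separation built into the parameter growth makes $\mu_1,\dots,\mu_k$ pairwise projectively distinct, hence mutually singular, so $\lambda$ supports at least $k$ ergodic measures. (For the sharp count: Theorem~\ref{thm:upper-bound} applied to $\tau$ gives at most $g-1+n_{\mathrm{odd}}/2$ in the nonorientable case and at most $g+n_{\mathrm{odd}}/2$ in the orientable case, which equal $k$ by optimality; and $\lambda$ is orientable when $\tau$ is.)

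I expect the main obstacle to be the quantitative core of the third paragraph: choosing the twisting parameters so that \emph{all} $k$ directions $\mu_i$ genuinely survive in the limit — equivalently, that the nested cones of measures carried along the splitting sequence stabilize to a $(k-1)$--simplex rather than to something of smaller dimension — and that the limit lamination is minimal and fills. This is the analogue in the present setting of the estimates in the classical Keane and Masur--Smillie non--unique--ergodicity constructions, and verifying that the triangular intersection pattern is exactly what makes those estimates go through with $k$ surviving directions (not fewer) is the heart of the argument.
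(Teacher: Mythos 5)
Your high-level plan --- smooth the configuration into a track, apply Dehn twist powers along the $a_i$ and $b_j$ to generate a splitting sequence, and use the triangular intersection pattern so that all $k$ directions in measure space survive --- matches the paper's strategy. But the central quantitative step, which you correctly flag as ``the heart of the argument,'' is left unfinished, and there is also a technical point about which track to compute on. On the second point: the paper computes carrying matrices not on the collapsed train track $\tau$ but on the uncollapsed \emph{bigon} track, because there every $a_i$ and $b_j$ runs along a set of branches (a ``super-branch'') disjoint from all the others, and this disjointness is precisely what makes the carrying matrices of the twist products block upper/lower triangular with the configuration's intersection numbers $m_{ij}=I(a_i,b_j)$ as entries. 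After collapsing bigons the $a$- and $b$-trainpaths can share branches, so that block structure is not immediate; the paper recovers it by a separate lemma showing that the span of the super-branches injects into the weight space of $\hat\tau$ with the same induced matrices. If you start on the collapsed $\tau$ directly you need a substitute for that lemma.

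The larger gap is the inference ``the separation built into the parameter growth makes $\mu_1,\dots,\mu_k$ pairwise projectively distinct, hence mutually singular, so $\lambda$ supports at least $k$ ergodic measures.'' Projectively distinct transverse measures need not be mutually singular (compare $\mu$ with $\mu+\nu$), and in any case what must actually be shown is that the simplicial cone of transverse measures on $\lambda$ has dimension at least $k$, since its extreme rays are the ergodic classes. The paper does this by isolating the notion of an $\epsilon$-matrix --- a quantitative column-dominance condition which the block-triangular carrying matrices satisfy once the twist exponents $\alpha_i,\beta_j$ are chosen with suitable hierarchical growth --- and then invoking a theorem from \cite{EMS} (cf.\ also \cite{Yocsurv}) asserting that a suitably controlled infinite product of $\epsilon$-matrices has image containing the cone over a $(k-1)$-dimensional simplex. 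Your proposal gestures at Keane- and Masur--Smillie-type non-unique-ergodicity estimates in place of this; that is the right family of ideas, but without a concrete criterion, a verification that the explicit block matrices satisfy it, and a convergence statement for the nested cones, the argument remains a plan rather than a proof.
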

Together with the main result
Theorem~\ref{thm:existence-triangular-patterns} of the previous
section, this gives the other half of our main theorem.

\bigskip We will construct the desired train track in several
steps. Since the $a_i, b_i$ are allowed to have bigon complementary
regions and parallel curves, we begin by inserting a puncture of the
surface in every such bigon and between parallel curves, so that the
resulting curves are in minimal position on the punctured
surface. From now on we will only work on this punctured surface.

Next, smooth all intersections so that at each intersection every
$a_i$ turns right and every $b_j$ turns left. The result is a
\emph{bigon track} (see e.g. \cite[Chapter~3.4]{PH}): a smooth graph $\tau$ on the
surface. Note that $\tau$ has exactly two incoming and two outgoing
branches at each vertex, and will have (many) bigons. By construction,
the images of $\tau$ under all $D_{a_i}$ and $D_{b_j}^{-1}$ are
carried by $\tau$. 

\smallskip We now aim to compute the carrying matrices for
$D_{a_i}\tau$ and $D_{b_j}^{-1}\tau$. To do this efficiently, we note
that each branch of $\tau$ is crossed by exactly one (multi)curve.  In
other words, each $a_i$ and $b_i$ define embdedded trainpaths in
$\tau$ which do not use any common branches.  We call the union of all
branches traversed by, say $a_i$, a \emph{super-branch}, and still
denote it by $a_i$ (and similarly for $b_j$). Note that the super-branches do not necessarily generate the whole weight space; also see Lemma~\ref{lem:sb-indep} below.

In the carrying matrices, we order the superbranches as
$a_1,\cdots,a_k,b_1,\cdots,b_k$. The carrying matrices for $D_{a_i}$
and $D_{b_j}^{-1}$ preserve $\tau$, then have $2\times 2$ block form
with the blocks corresponding to the $a-$ and the $b-$ branches.

Denote by $m_{ij}$ the intersection number $I(a_i,b_j)$.

The matrix of $D_{a_i}$ has
identity $I$ in the diagonal blocks, zero matrix in the $(2,1)$-block,
and the $(1,2)$ block has zeros except the $i^{th}$ row is
$m_{i1} m_{i2} \cdots m_{ii} 0 \cdots 0$. Likewise, the matrix for
$D_{b_j}^{-1}$ will have identity on the diagonal, zero matrix in the
$(1,2)$-block, and the $(2,1)$ block will be zero except for the
$j^{th}$ row, which is $0\cdots 0 m_{jj} m_{j+1,j} \cdots m_{kj}$.

Now choose positive integers
$\alpha_1,
\alpha_2,\cdots,\alpha_k,\beta_1,\beta_2,\cdots,\beta_k$. The matrix
for $D_{a_i}^{\alpha_I}$ is obtained from the one for $D_{a_i}$ by
multiplying the $(1,2)$-block by $\alpha_i$. The matrix $A$ of
$D_{a_1}^{\alpha_1}\cdots D_{a_k}^{\alpha_k}$ (note that these
commute) is obtained by adding the $(1,2)$-blocks of each
$D_{a_i}^{\alpha_i}$ (and the other 3 blocks are still $I$ and
$0$). Thus
\[
A=\begin{pmatrix}
  I & A'\\
  0 & I
\end{pmatrix}
\]

where

\[
A'=\begin{pmatrix}
\alpha_1 m_{11} & 0 & 0 & \cdots\\
\alpha_2 m_{21} & \alpha_2 m_{22} & 0 & \cdots\\
\alpha_3 m_{31} & \alpha_3 m_{32} & \alpha_3 m_{33} & \cdots \\
\cdots & & &
\end{pmatrix}
\]

The matrix $B$ of $D_{b_1}^{\beta_1}\cdots D_{b_k}^{\beta_k}$ is
obtained similarly by adding the $\beta_j$-multiples of the
$(2,1)$-block, i.e.
\[
B=\begin{pmatrix}
I & 0\\
B' & I\end{pmatrix}
\]

where

\[
B'=\begin{pmatrix}
\beta_1 m_{11} & \beta_1 m_{21} & \beta_1 m_{31} & \cdots\\
0 & \beta_2 m_{22} & \beta_2 m_{32} & \cdots\\
0 & 0 & \beta_3 m_{33} & \cdots \\
\cdots &&&
\end{pmatrix}
\]

Finally, $AB$ is the matrix
\[
AB=\begin{pmatrix}
I+A'B' & A'\\
B' & I
\end{pmatrix}
\]

and

\[
A'B'=\begin{pmatrix}
\alpha_1\beta_1m_{11}^2 & \alpha_1\beta_1 m_{11}m_{21} &
\alpha_1\beta_1 m_{11}m_{31} & \cdots\\
\alpha_2\beta_1 m_{11}m_{21} & \alpha_2\beta_1
m_{21}^2+\alpha_2\beta_2m_{22}^2 &
\alpha_2\beta_1m_{21}m_{31}+\alpha_2\beta_2m_{22}m_{32} & \cdots\\
\alpha_3\beta_1m_{11}m_{31} & \alpha_3\beta_1
m_{21}m_{31}+\alpha_3\beta_2m_{22}m_{32} &
\alpha_3\beta_1m_{31}^2+\alpha_3\beta_2m_{32}^2+
\alpha_3\beta_3m_{33}^2 & \cdots\\
\cdots &&&
\end{pmatrix}
\]

The following definition captures the essential form of these matrices, together with properties we will need for non-unique ergodicity.

\begin{defin}
  A $2k\times 2k$-matrix $C$ with nonnegative entries is an $\epsilon$-matrix if
  \begin{enumerate}
    \item The first $k$ diagonal entries are positive and
      $c_{i+1,i+1}/c_{ii}<\epsilon$ for $i=1,2,\cdots,k-1$,
      \item $c_{ij}/c_{jj}<\epsilon$ for all $1\leq j\leq k$ and
        $i\neq j$, and
      \item $c_{ij}/c_{kk}<\epsilon$ for all $k<j\leq 2k$.
  \end{enumerate}
\end{defin}

In other words, the first $k$ columns are dominated by their diagonal
entries; these entries are quickly decreasing on the diagonal, and the
last one $c_{kk}$ still dominates all entries in the right half of the
matrix.

\begin{prop}
  For every $\epsilon>0$ we can choose $\alpha_i,\beta_j$ so that the
  matrix $AB$ is an $\epsilon$-matrix.
\end{prop}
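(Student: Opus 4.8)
\textit{Proof plan.} The plan is to pick all the $\alpha_i$ and all the $\beta_j$ to be suitable powers of a single large integer $N$, and then to verify the three defining inequalities of an $\epsilon$-matrix directly from the explicit description of $AB$, using only that the intersection numbers $m_{ij}=I(a_i,b_j)$ are fixed nonnegative integers: in particular they are all bounded by some $M$, and $m_{ii}\ge 1$ for every $i$. Writing $T_j:=\sum_{p\le j}\beta_p m_{jp}^2$, the $j$-th diagonal entry is $c_{jj}=1+\alpha_jT_j$, and since $\beta_jm_{jj}^2\le T_j\le kM^2\max_{p\le j}\beta_p$, the quantity $T_j$ is, up to a bounded factor, just $\beta_j$. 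So morally $c_{jj}\approx\alpha_j\beta_j$; the entries of the block $B'$ appearing in column $j$ are at most $M\max_{p\le j}\beta_p$; the off-diagonal entries of $A'B'$ are bounded multiples of the corresponding $\alpha_i\beta_p$; and the entries of the two right-hand blocks $A'$ and $I$ are at most $\max(1,M\alpha_1)$.

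With this in mind I would proceed in the following order. \emph{First}, choose a common $\beta$-scale: set $\beta_1=\dots=\beta_k$ equal to a large integer $B$, where $B$ is fixed only at the very end once the $\alpha$-ratios are known. \emph{Second}, choose $\alpha_k,\alpha_{k-1},\dots,\alpha_1$ to be a rapidly increasing sequence read backwards, say $\alpha_j=A\rho^{\,k-j}$ with $\rho$ and $A$ large integers; taking $\rho>kM^2/\epsilon$ makes each ratio $c_{j+1,j+1}/c_{jj}$ (which, up to the bounded factor $T_{j+1}/T_j$, is $\alpha_{j+1}/\alpha_j=\rho^{-1}$) smaller than $\epsilon$, giving condition (1), and it also makes $c_{jj}$ dominate every $c_{lj}$ with $l>j$ (those entries carry a factor $\alpha_l/\alpha_j=\rho^{-(l-j)}$); taking $A>M/\epsilon$ makes the $B'$-entries in column $j$, which are $\le MB\le\epsilon\alpha_jB\le\epsilon c_{jj}$, small. \emph{Third}, fix $B>\rho^{\,k-1}M/\epsilon$; then every entry of $A'$ is $\le M\alpha_1=MA\rho^{\,k-1}\le\epsilon AB\le\epsilon c_{kk}$ and the identity entries are $\le 1<\epsilon c_{kk}$, giving condition (3). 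The remaining off-diagonal entries of $A'B'$, of the form $\alpha_i\sum_p\beta_p m_{ip}m_{jp}$, are handled by the same comparisons of powers of $\rho$ and $B$.

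The step I expect to be the genuine obstacle is making the three conditions simultaneously satisfiable, i.e. getting the order of quantifiers right. Condition (1) forces the diagonal to decay geometrically, so $c_{kk}\le\epsilon^{k-1}c_{11}$; but condition (3), applied to the entry $\alpha_1m_{11}$ of $A'$ sitting in the top right block, forces $\alpha_1\le\alpha_1m_{11}<\epsilon c_{kk}\le\epsilon^{k}c_{11}=\epsilon^{k}(1+\alpha_1\beta_1m_{11}^2)$, while condition (2) already forces $\alpha_1\gtrsim 1/(\epsilon m_{11})$. These are compatible only if the $\beta$-scale is enormous, roughly $\beta_1\gtrsim\epsilon^{-k}$. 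In other words one \emph{cannot} first fix the $\beta_j$ and then the $\alpha_i$: the $\beta$-scale must be chosen last, large enough to absorb the $k$-fold geometric decay forced by condition (1). Once the scales are chosen in the correct order, each inequality reduces to a comparison of powers of $N$ in which the bounded intersection data is harmless, so the verification is routine.
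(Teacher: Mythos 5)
Your choice to set all the $\beta_j$ equal to a common scale $B$ is where the argument breaks down, and it is the point on which your approach genuinely diverges from the paper's. The paper chooses $\alpha_1\gg\alpha_2\gg\cdots\gg\alpha_k$ \emph{and} $\beta_1\ll\beta_2\ll\cdots\ll\beta_k$, with the additional constraint $\alpha_1\beta_1\ll\alpha_2\beta_2\ll\cdots\ll\alpha_k\beta_k$; the rapid growth of the $\beta_j$ is not optional. The problem is condition (2) applied with $i<j\leq k$. For such $i,j$ one has
\[
c_{ij}=(A'B')_{ij}=\alpha_i\sum_{p\leq i}\beta_p\,m_{ip}m_{jp},\qquad
c_{jj}=1+\alpha_j\sum_{p\leq j}\beta_p\,m_{jp}^2 .
\]
With all $\beta_p=B$ these become $c_{ij}\asymp\alpha_i B$ and $c_{jj}\asymp\alpha_j B$ (up to a factor bounded by $kM^2$, and assuming the generic situation $m_{jp}\neq 0$ for some $p\leq i$ --- the triangular hypothesis only forces $m_{jp}=0$ for $p>j$, not for $p<j$). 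Hence $c_{ij}/c_{jj}\asymp\alpha_i/\alpha_j=\rho^{\,j-i}\gg 1$, which is the opposite of what you need. You verified condition (2) only for $l>j$ ("those entries carry a factor $\alpha_l/\alpha_j=\rho^{-(l-j)}$") and then asserted the remaining off-diagonal entries are "handled by the same comparisons"; but for $l<j$ the factor $\alpha_l/\alpha_j$ is a \emph{positive} power of $\rho$ and the comparison goes the wrong way.

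The paper's extra condition on the $\beta_j$ is exactly what repairs this. Since the sum defining $c_{ij}$ for $i<j$ only runs over $p\le i$, a rapidly increasing $\beta$ makes $c_{ij}\asymp\alpha_i\beta_i$, while $c_{jj}\asymp\alpha_j\beta_j$; the hypothesis $\alpha_i\beta_i\ll\alpha_j\beta_j$ then gives $c_{ij}/c_{jj}\ll 1$. (As a side remark, this also shows that the diagonal entries $c_{jj}\asymp\alpha_j\beta_j$ actually \emph{increase}, so condition (1) in the paper's definition of an $\epsilon$-matrix must be read as $c_{ii}/c_{i+1,i+1}<\epsilon$; your literal reading is one reason the constant-$\beta$ ansatz looked plausible.) Your observation that the overall $\beta$-scale must be chosen after the $\alpha$-ratios is correct and matches the paper, but the essential point you are missing is that the $\beta_j$ must be mutually very different, not merely large.
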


\begin{proof}
  Fix a large number $N$ and choose the $\alpha_i,\beta_j$ so that
  \begin{itemize}
    \item
      $\alpha_1\gg\alpha_2\gg\cdots\gg\alpha_k$,
    \item $\beta_1\ll\beta_2\ll\cdots\ll\beta_k$, and
      \item
        $\alpha_1\beta_1\ll\alpha_2\beta_2\ll\cdots\ll\alpha_k\beta_k$.
  \end{itemize}
  where $P\ll Q$ means that $Q/P>N$. E.g. first choose the $\alpha_i$
  and then $\beta_j$. An inspection then shows that when $N$ is
  sufficiently large, $AB$ is an $\epsilon$-matrix.
\end{proof}

For any choice of $\alpha_i,\beta_j$ the
composition of Dehn twists considered above can be viewed as a map
$\tau\to\tau$ and it can be realized by a sequence of splittings
starting at $\tau_0:=\tau$ and ending at a bigon track $\tau_1$ which is
combinatorially isomorphic to $\tau$ but has a different marking. Now
choose a different set of $\alpha_i,\beta_j$ and consider the induced
splitting sequence from $\tau_1$ to a train track $\tau_2$ etc. This
sequence will converge to a lamination $\Lambda$.

The following is a special case of a theorem proved in \cite{EMS};
similar ideas are used in \cite[Section 8.3]{Yocsurv}.
\begin{thm}
  Alice and Bob play the following game. Alice chooses
  $\epsilon_1>0$. Then Bob chooses a splitting sequence
  $\tau_1\to\tau_0$ whose transition matrix is an
  $\epsilon_1$-matrix. Then Alice chooses $\epsilon_2>0$ and then Bob
  chooses $\tau_3\to\tau_2$ so that the transition matrix is an
  $\epsilon_2$-matrix, etc. Alice wins
  if %
  the image of infinite matrix product of the matrices Alice chose
  contains the cone over a $(k-1)$--dimensional simplex, and otherwise Bob
  wins.

  Then Alice has a winning strategy.
\end{thm}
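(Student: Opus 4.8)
Let me set up the framework. Alice wants to guarantee that the nested intersection $\bigcap_n C_1 C_2 \cdots C_n (\Delta_{2k-1})$ — where $\Delta_{2k-1}$ is the standard simplex in the positive orthant of $\R^{2k}$ and each $C_n$ is the $\epsilon_n$-matrix Bob is forced to play — still contains a $(k-1)$-dimensional sub-simplex (in projective terms). The plan is a standard "shrinking target / cone contraction" argument: Alice will pick $\epsilon_{n+1}$ so small (depending on the matrix product $C_1\cdots C_n$ already committed to) that whatever $\epsilon_{n+1}$-matrix Bob produces, the image $C_1\cdots C_n C_{n+1}(\Delta)$ is still large in the $k-1$ directions we care about, and only its diameter transverse to those directions has shrunk. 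Concretely, I would track the images of the first $k$ standard basis vectors $e_1,\dots,e_k$; the key geometric claim is that an $\epsilon$-matrix sends the cone over $e_1,\dots,e_k$ into a thin neighborhood (of width $O(\epsilon)$) of itself, *and* that the cone spanned by the images $C e_1,\dots, C e_k$ is uniformly non-degenerate — i.e. these $k$ vectors stay boundedly far from being linearly dependent when projectivized. The first condition is essentially items (2) and (3) in the definition of $\epsilon$-matrix: every column is dominated by diagonal entries among the first $k$ coordinates, and the bottom $k$ coordinates of every column are tiny compared to $c_{kk}$, hence tiny compared to the $i$-th column's $i$-th entry for each $i\le k$. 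So $Ce_i$ is, up to relative error $O(\epsilon)$, supported on the first $k$ coordinates and pointing "mostly" along $e_i$.

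The core estimate I would prove first is a \emph{quantitative contraction lemma}: there is a function $\delta(\epsilon)\to 0$ as $\epsilon\to 0$ such that if $\Sigma\subset\Delta$ is the convex hull of points $v_1,\dots,v_k$ each within Hilbert (or projective) distance $\rho$ of $e_1,\dots,e_k$ respectively with $\rho$ small, and $C$ is an $\epsilon$-matrix, then the normalized images $\overline{Cv_1},\dots,\overline{Cv_k}$ again lie within distance $\delta(\epsilon) + (\text{contraction factor})\cdot\rho$ of $e_1,\dots,e_k$. The diagonal-decay condition (1), $c_{i+1,i+1}/c_{ii}<\epsilon$, is what lets one see that the $i$-th column genuinely dominates in the $i$-th coordinate relative to later columns, so the simplex $\Sigma$ is carried to a simplex of comparable "shape" — it is crucial here that the $a_i$ are listed so the diagonal decays, matching the construction of $AB$ in the previous proposition. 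Iterating, and choosing $\epsilon_{n+1}$ at each stage small enough that $\delta(\epsilon_{n+1})$ plus the contracted previous error is at most, say, $2^{-n}$, gives a Cauchy sequence of $k$-tuples of directions whose limit $\{w_1^\infty,\dots,w_k^\infty\}$ is a non-degenerate $(k-1)$-simplex contained in every $C_1\cdots C_n(\Delta)$, hence in the limiting cone. That is Alice's winning strategy.

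The one subtlety — and I expect this to be the main obstacle — is that Alice commits to $\epsilon_{n+1}$ \emph{before} seeing Bob's $C_{n+1}$, so the contraction lemma must be uniform over all $\epsilon_{n+1}$-matrices, and moreover the "contraction factor" multiplying the old error $\rho$ must be genuinely $<1$ (or at least the errors must be summable). This requires care because an $\epsilon$-matrix is only constrained by ratio inequalities, not by any lower bound forcing actual contraction in the transverse directions; what saves us is that we only need the \emph{first $k$ coordinate directions} to stabilize, and conditions (2)–(3) force the transverse (last $k$) coordinates of $Ce_i$ down by a factor $\epsilon$ relative to the dominant entry \emph{every step}, so the accumulated transverse spread is controlled by $\sum \epsilon_n$, which Alice makes as small as she likes. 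Meanwhile non-degeneracy of $\{Ce_1,\dots,Ce_k\}$ is automatic from (1)–(2): the matrix $(c_{ij})_{i,j\le k}$ is, after dividing column $j$ by $c_{jj}$, within $\epsilon$ in every off-diagonal entry of the identity, hence invertible with controlled condition number. I would close by noting that this is exactly the structure of the products $AB$ built above (with the $m_{ij}=I(a_i,b_j)$ and the $\alpha,\beta$ chosen as in the previous proposition), so the theorem applies to give a lamination $\Lambda$ carried by $\tau$ whose space of transverse measures contains a $(k-1)$-simplex of projective classes, i.e. $k$ distinct ergodic measures.
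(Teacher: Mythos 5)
The paper does not actually prove this theorem: immediately before the statement it writes ``The following is a special case of a theorem proved in \cite{EMS}; similar ideas are used in \cite[Section 8.3]{Yocsurv}'', so there is no internal proof against which to compare. Evaluating your sketch on its own merits, the overall shape (track $\overline{C_1\cdots C_n e_j}$ for $j\le k$, get a Cauchy sequence of $k$--tuples whose limit is nondegenerate and sits in every $C_1\cdots C_n(\Delta)$) is the right one, but the ``quantitative contraction lemma'' at its core does not hold in the form you state, and the way you try to iterate it has the matrices acting on the wrong side.

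Concretely: an $\epsilon$-matrix only constrains \emph{ratios from above} -- $c_{ij}/c_{jj}<\epsilon$ and so on -- and the consecutive diagonal ratios $c_{\ell\ell}/c_{jj}$ (for $j,\ell\le k$) can be \emph{arbitrarily extreme}. So if $v$ is near $e_j$ but carries a tiny component in a direction $e_\ell$ whose diagonal entry dominates, then $\overline{Cv}$ is thrown far from $e_j$: the perturbation is amplified by $c_{\ell\ell}/c_{jj}$, which Bob can make as large as he likes. There is therefore no uniform contraction factor $c<1$ over all $\epsilon$-matrices, and the recursion $\rho_{n+1}\le\delta(\epsilon_{n+1})+c\rho_n$ you propose cannot be established. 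Your own caveat paragraph notices this but only patches the transverse (last $k$) coordinates; the same amplification problem arises \emph{within} the first $k$ coordinates and is where the lemma actually fails.

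The fix is to be careful about the order of composition and to use Alice's information advantage in the right place. The nested sets are $C_1\cdots C_n(\Delta)$, with the new matrix multiplied on the \emph{right}; hence
\[
w_j^{(n+1)}=\overline{C_1\cdots C_n\bigl(C_{n+1}e_j\bigr)},\qquad w_j^{(n)}=\overline{C_1\cdots C_n e_j}.
\]
The only thing one needs from $C_{n+1}$ is condition (2): $C_{n+1}e_j$ is a positive multiple of $e_j+v$ with $\lVert v\rVert_\infty<\epsilon_{n+1}$, uniformly over all $\epsilon_{n+1}$-matrices. Since Alice chooses $\epsilon_{n+1}$ \emph{after} seeing $C_1,\dots,C_n$, she knows the quantities $\lVert C_1\cdots C_n\rVert$ and $\min_j\lVert C_1\cdots C_n e_j\rVert>0$, and can choose $\epsilon_{n+1}$ so small that
\[
\frac{\lVert C_1\cdots C_n v\rVert}{\lVert C_1\cdots C_n e_j\rVert}\;\le\;\frac{\lVert C_1\cdots C_n\rVert\,\lVert v\rVert}{\lVert C_1\cdots C_n e_j\rVert}\;<\;2^{-n}\eta
\]
for every admissible $C_{n+1}$ and every $j\le k$. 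This makes the projective drift $d(w_j^{(n+1)},w_j^{(n)})$ summable and as small as desired, so $w_j^{(n)}\to w_j^\infty$ with $w_j^\infty$ close to $e_j$, hence the $w_j^\infty$ are independent; and $w_j^\infty\in C_1\cdots C_m(\Delta)$ for every $m$ since $w_j^{(n)}\in C_1\cdots C_m(\Delta)$ for $n\ge m$ and these sets are closed. Note that no ``contraction factor $<1$'' is needed at all: the entire burden is carried by condition (2) together with Alice's right to choose $\epsilon_{n+1}$ adaptively. Your lemma as written instead applies $C_{n+1}$ to the \emph{output} $w_j^{(n)}$ (i.e.\ iterates from the left, producing $\overline{C_n\cdots C_1 e_j}$), which is not the relevant quantity and is exactly where the unbounded amplification would bite.
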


The previous theorem implies that we can find a carrying sequence
$\tau_n$ of images of the bigon track $\tau$ under suitable twist
powers, so that the carrying matrices satisfy the conclusion of the
theorem. If $\tau$ were an actual train track (not a bigon track),
this would immediately Theorem~\ref{thm:matrices}, since a lamination
carried by every $\tau_n$ would then admit at least $k$ different
ergodic measures. Since the theory of bigon tracks is less
well-developed, we reduce to the case of train tracks using
the following lemma, to formally conclude the theorem.
\begin{lemma}\label{lem:sb-indep}
  Let $\tau$ be the bigon track constructed above.
  \begin{enumerate}
  \item By collapsing bigons in $\tau$ we obtain a train track
    $\hat{\tau}$.%
  \item The trainpaths (in $\hat{\tau}$) defined by the superbranches
    $a_i, b_j$ are independent in the weight space of $\hat{\tau}$. In
    particular, there is an isomorphism of the span of the
    superbranches in the weight space of $\tau$ with the span of their
    images in $\hat{\tau}$.
  \item Suppose that $\tau \simeq \tau' < \tau$ is a carrying relation
    inducing a matrix $M$ on the span of the superbranches, and that
    $\hat{\tau}, \hat{\tau}'$ are the corresponding train tracks from
    (1). Then, identifying the spans of superbranches as in (2), the
    corresponding carrying
    $\hat{\tau} \simeq \hat{\tau}' < \hat{\tau}$ also has the matrix $M$.
  \end{enumerate}
\end{lemma}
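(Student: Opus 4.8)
The plan is to prove the three parts in order, using that $\tau$ is a bigon track obtained from a filling collection of multicurves by smoothing intersections, so that each branch of $\tau$ is traversed by exactly one of the curves $a_i$ or $b_j$. For part (1), I recall the standard fact (see \cite[Chapter 3.4]{PH}) that collapsing a bigon in a bigon track yields a new bigon track with one fewer complementary bigon, and that iterating this process terminates in a genuine train track $\hat\tau$; one must only check that the collapsing does not create inessential loops or monogons, which follows because the original curves are in minimal position on the punctured surface. The collapsing induces a surjection on weight spaces $W(\tau)\to W(\hat\tau)$, and each superbranch of $\tau$ maps to a trainpath in $\hat\tau$.

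For part (2), the key point is that a weight vector supported on the superbranch $a_i$ in $\hat\tau$ is nothing but a (possibly fractional) multiple of the multicurve $a_i$ carried by $\hat\tau$. To see that the images of $a_1,\dots,a_k,b_1,\dots,b_k$ in $W(\hat\tau)$ are linearly independent, suppose $\sum_i s_i [a_i] + \sum_j t_j [b_j] = 0$ in $W(\hat\tau)$. Since distinct superbranches in $\tau$ use disjoint sets of branches (each branch is crossed by exactly one curve) and the collapsing map only identifies branches within a single curve's superbranch, the images of distinct superbranches are still supported on disjoint branch-sets of $\hat\tau$ \emph{unless} a bigon collapse merges a branch of $a_i$ with a branch of $a_{i'}$ — but a bigon of $\tau$ is bounded by one arc of some $a_i$ and one arc of some $b_j$, so collapsing it identifies an $a$-branch with another $a$-branch only if $i=i'$, and similarly for $b$. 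Hence the supports remain disjoint, forcing every coefficient to vanish. This disjointness-of-supports argument simultaneously shows that the collapsing map restricts to an isomorphism from $\spanD\{a_i,b_j\}\subset W(\tau)$ onto its image in $W(\hat\tau)$, since it is injective on that subspace and manifestly surjective onto the image.

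For part (3), I would track a splitting or carrying relation $\tau \simeq \tau' < \tau$ through the collapsing. A carrying $\tau' < \tau$ is recorded by how each branch of $\tau'$ runs through branches of $\tau$; restricting to superbranches, the induced matrix $M$ on $\spanD\{a_i,b_j\}$ records how the superbranches of $\tau'$ traverse those of $\tau$, which is exactly the data of the carrying matrices $AB$ computed above. The claim is that collapsing bigons on both sides is compatible with carrying: a bigon of $\tau'$ maps into a bigon of $\tau$ (this uses that the carrying is induced by a composition of Dehn twists realized by splittings, so it respects the curve/bigon structure), hence collapsing descends to a carrying $\hat\tau' < \hat\tau$, and the effect on the superbranch spans is computed by the same traversal data. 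Using the identification from (2) on both $\tau$ and $\tau'$ — which is canonical since the twist construction gives $\tau' \simeq \tau$ combinatorially — the matrix is unchanged.

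The main obstacle I expect is part (3): making precise that the carrying relation descends through the bigon collapse in a way compatible with the superbranch identification, and in particular checking that the collapsing of $\tau$ and of $\tau'$ can be chosen ``in the same order'' so that the two weight-space identifications from (2) intertwine the carrying matrices. The cleanest route is probably to observe that both collapses are determined up to the natural isomorphism by the combinatorial type together with the marking by the multicurves, and that a single bigon collapse changes the carrying matrix on superbranches not at all (it only reparametrizes branches \emph{within} a superbranch, which does not affect the integer recording how many times one superbranch runs over another); then (3) follows by induction on the number of bigons collapsed.
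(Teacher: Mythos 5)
The difficulty is in your part (2), and it stems from a misdescription of the bigons of $\tau$. You assert that ``a bigon of $\tau$ is bounded by one arc of some $a_i$ and one arc of some $b_j$.'' But such a region would be a bigon of $a\cup b$ in the usual sense, and these have been explicitly ruled out by putting the curves in minimal position on the punctured surface. The complementary bigons of the bigon track $\tau$ (regions with exactly two cusps) are instead the \emph{quadrilaterals} of $a\cup b$: a $2m$-gon of $a\cup b$ becomes an $m$-cusped region after smoothing, so a $4$-gon -- with two $a$-sides and two $b$-sides -- becomes a bigon of $\tau$. Collapsing such a bigon identifies its two $b$-sides (equivalently, isotopic arcs of $\Lambda=\bigcup b_j\cap(S\smallsetminus\bigcup a_i)$), and these two $b$-arcs may well lie on \emph{different} curves $b_j, b_{j'}$. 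Consequently the images of the superbranches $b_1,\dots,b_k$ in $\hat\tau$ need \emph{not} have disjoint supports, and your ``disjointness-of-supports forces all coefficients to vanish'' step fails precisely where it is needed.

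What does survive from your argument is the easier half: the $a_i$ are disjoint embedded curves in $\hat\tau$, and their branches are disjoint from the $\Lambda$-branches traversed by the $b_j$'s, so the $a_i$ are independent and no nontrivial $b$-combination lies in their span. The genuinely new input needed -- and what the paper's proof supplies -- is an intersection-number argument to separate the $b_j$'s from one another: given an alleged relation with some $\beta_j\neq 0$, one looks at the extreme index $k$ with $\beta_k\neq 0$, reduces to $\beta_k'=0$, and then pairs both sides against a suitable $a$-curve, using the triangular condition $I(a_i,b_j)=0$ for $j>i$ together with $I(a_k,b_k)>0$ to derive a contradiction. This is a different mechanism than support-disjointness, and it cannot be avoided. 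Your part (1) is in the right spirit but glosses over why the collapsing terminates in an actual track; the paper makes this precise by an Euler characteristic count on the components of $S\smallsetminus\bigcup a_i$, showing not every complementary region of $\Lambda$ there can be a rectangle.
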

\begin{proof}
  \begin{enumerate}
  \item Consider the surface $X = S-\cup_i a_i$. Observe that every
    component of $X$ has negative Euler characteristic, since the
    $a_i$ are pairwise disjoint and nonisotopic, and $X$ is assumed to
    be a hyperbolic surface.

    The intersection of the union $\cup_jb_j$ with $X$ is a union
    $\Lambda$ of disjoint arcs, and every bigon of $\tau$ corresponds
    to a rectangle in $X-\Lambda$ with two sides in some $a_i$ and two
    sides in some $b_j$ (since we assume that the $a_i$ and $b_j$
    intersect minimally, and there therefore cannot be bigons formed
    by single arcs in $a_i$ and $b_j$). By the remark above, in every
    component of $X$, not every complementary component of $\Lambda$
    is such a rectangle.

    Hence, we can collapse all bigons of $\tau$ to obtain a train
    track $\hat{\tau}$. It contains the $a_i$ as embedded curves, and
    a branch for each isotopy class of arc of $\Lambda$.

  \item By construction, the $a_i$ are disjoint embedded curves in
    $\hat{\tau}$, so they are independent. Next, it is clear that no
    sum of $a_i$ and $b_i$ (with nonzero coefficients) is equal to a
    sum of only $a_i$. It remains to show that two sums which both
    have nonzero coefficients in front of some $b_i$ cannot be
    equal. Suppose this would be false, i.e
    \[ \sum (\alpha_i [a_i] + \beta_i [b_i]) = \sum (\alpha_i' [a_i] +
      \beta_i' [b_i]) \] where square brackets denote the induced
    trainpaths on $\hat{\tau}$. Let $k$ be the largest index so that
    $\beta_k \neq 0$; we may assume that $\beta'_k=0$ (by swapping the
    sides and subtracting a suitable multiple of $[\beta_k]$).  Now,
    since $b_i$ is disjoint from all $a_i, i < k$, the trainpath given
    by the right-hand-side has intersection $0$ with $a_i$. On the
    other hand, since $a_k$ and $b_k$ do intersect, the left hand side
    defines a multicurve with nonzero intersection with $b_k$. Hence
    they cannot be the same weights, as they would then define
    isotopic curves.
  \item This is immediate from (1) and (2).
  \end{enumerate}
\end{proof}
As a consequence of the lemma, we see that the resulting splitting
sequence of $\hat{\tau}$ has polyhedra of measures intersecting in a
simplex of dimension at least $k$, which shows the theorem.

\section{Upper triangular pattern of curves}
\label{sec:conf-from-fol}
In this section we show that every set of ergodic measures on the same minimal and filling topological foliation arises from the construction in the last section. 
\begin{thm}Let $\mathcal{F}$ be a minimal and filling topological
  measured foliation
  with mutually singular ergodic transverse measures $\mu_1,...,\mu_k$ and $D_*$
 be a metric giving the usual topology on $\mathcal{PMF}$. For any $\epsilon>0$ 
there exist curves $a_1,...,a_k; b_1,...,b_k$ so that 
\begin{enumerate}
\item $a_1,...,a_k,b_1,..,b_k$ are all homotopically nontrivial and
  pairwise non-homotopic.
\item $D_*(a_i,\mu_i)<\epsilon$ and $D_*(b_i,\mu_i)<\epsilon$.
\item $I(a_i,a_j)=0$ and $I(b_i,b_j)=0$ for all $i,j$. 
\item $I(a_i,b_j)=0$ for all $i<j$. 
\item $I(a_i,b_i)\neq 0$ for all $i$.
\end{enumerate}
\end{thm}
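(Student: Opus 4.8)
The plan is to reverse the construction of Section~\ref{sec:train-tracks}: instead of starting from curves and producing a foliation with many measures, we start from a foliation with $k$ mutually singular ergodic measures and extract the curves. The key idea, following Katok, is to realize each ergodic measure $\mu_i$ by a long closed leaf segment of $\mathcal F$ closed up by a short transverse arc, using the Birkhoff ergodic theorem: for a $\mu_i$-generic point on a fixed transverse interval $I$, following the leaf until it returns very close to the starting point on $I$ gives a curve $a_i$ whose transverse measure class (i.e.\ its image in $\mathcal{PMF}$, tested against a finite collection $\Gamma$ of curves determining $\mathcal{PML}$) is $\epsilon$-close to $\mu_i$. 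This immediately gives (1) (homotopic nontriviality and pairwise non-homotopy follow from filling plus closeness to distinct, mutually singular measures, after possibly discarding trivial curves) and (2).

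The heart of the proof is engineering the intersection pattern (3)--(5), and the mechanism is a \emph{nested sequence of shrinking transverse intervals / first-return maps}. First I would fix a transverse interval $I$, lift to $\hat I = I\times\{+,-\}$ to track orientations, and let $T$ be the induced (non-classical) interval exchange; lift the $\mu_i$ to $\hat\mu_i$. For each $i$, pick a $\mu_i$-density point $x_i$ lying in the Birkhoff-good set $A_i$ (where ergodic averages of the orbit are already close to $\mu_i$), and choose tiny disjoint neighborhoods $J_i \ni x_i$, so small that each $J_i$ has overwhelmingly large $\hat\mu_i$-mass compared to the $\hat\mu_j$-mass, $j\ne i$, of all the $J$'s combined. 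Taking the first return map $T'$ to $J' = \bigcup J_j$, the invariance of each $\hat\mu_i|_{J'}$ forces, for a positive $\hat\mu_i$-fraction of points $y\in J_i$, the first one or two returns under $T'$ to land back in $A_i\cap J_i$ \emph{without} meeting any $J_j$ with $j\ne i$. Closing up such a leaf segment by the short horizontal arc in $J_i$ gives $a_i$: because the leaf segment avoids all other $J_j$ and the closing arc lies in $J_i$, the curves $a_1,\dots,a_k$ are pairwise disjoint, giving the first half of (3).

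For the $b_i$ (the other half of (3), plus (4) and (5)) I would repeat the construction one more level down. Each $a_i$ carries a distinguished short horizontal segment $H_i$ used to close it up; since an endpoint of $H_i$ is a $\mu_i$-density point, $H_i$ has much larger $\hat\mu_i$-mass than $\hat\mu_j$-mass for $j\ne i$. Reorder so that $\hat\mu_i(H_i)$ is decreasing in $i$, take the first return map to $\bigcup_{j\ge i} H_j$, and by the same mass argument find $z_i\in H_i\cap A_i$ whose leaf orbit returns to $H_i$ in the same direction after avoiding $\bigcup_{j>i} H_j$; close up to get $b_i$. Then $I(b_i,b_j)=0$ because the relevant pieces live in disjoint segments; $I(a_i,b_j)=0$ for $i<j$ because $b_j$'s leaf segment avoids $H_i\subset a_i$ and the horizontals don't cross (giving (4)); and $I(a_i,b_i)\ne 0$ because $H_i\subset a_i$ and $b_i$ genuinely crosses $H_i$ (giving (5), using minimality/filling to ensure it can't be homotoped off). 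Finally, closeness in $\mathcal{PMF}$ for \emph{both} $a_i$ and $b_i$ is checked by isotoping any test curve $\gamma$ (fixing $\mathcal F$) to a union of horizontal segments in $I$ and leaf segments, noting $|I(a_i,\gamma') - n_{a_i}|\le C_{\gamma'}$ where $n_{a_i}$ counts signed crossings of the long vertical part with $\gamma'$'s horizontals, and observing that $n_{a_i}/n_{b_i}\to 1$ and both approximate the $\mu_i$-intersection numbers as the ergodic averages converge ($\delta$ small).

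The main obstacle is the bookkeeping in the nested first-return construction: one must simultaneously keep the intervals small enough that the dominant-mass inequalities hold at \emph{every} level, keep them inside the Birkhoff-good sets $A_i$ (and their density-point refinements $A_i'$), and ensure the orientation-matching so that closing up actually yields an essential simple closed curve rather than an arc or an inessential loop. Making the quantifiers on $\epsilon$, $\delta$, $r_1$ (aperiodicity gap), and $r_2$ (density radius) consistent — and verifying that "one or two returns suffice" genuinely follows from the measure-invariance count $\hat\mu_i(T'(A_i'\cap J_i)\cup (T')^2(A_i'\cap J_i)\setminus J_i) < \hat\mu_i(A_i'\cap J_i) - 2\hat\mu_i(J'\setminus J_i)$ — is where the real care is needed; the topological conclusions (3)--(5) are then essentially immediate from the disjointness built into the segments.
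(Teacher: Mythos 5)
Your proposal follows essentially the same route as the paper's proof: fix a transversal $I$, lift to $\hat I = I\times\{+,-\}$ to track orientations, quantify Birkhoff (sets $A_i$) and Lebesgue density ($A_i'$), choose disjoint shrinking intervals $J_i$ about density points, use the first-return map to $\bigcup J_i$ together with a measure-dominance count to close up leaf segments into the $a_i$, reorder by $\mu_i(H_i)$ and repeat one level down on $\bigcup_{j\geq i} H_j$ to get the $b_i$, and finish by comparing intersection numbers of $a_i$, $b_i$ against a test curve isotoped into horizontal and vertical pieces. The one detail the paper makes explicit and you gesture at is resolving an orientation mismatch among three consecutive returns by a small surgery (Figure~\ref{surgery_curves}); otherwise the construction, including the care with quantifiers and nested first-return maps, matches the paper's Steps 1--7.
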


The proof occupies the remainder of this section. Roughly speaking,
the idea is to apply the Birkhoff ergodic theorem to find leaves
generic for each $\mu_i$, follow them for a long time, and then close
them up to produce the desired curves. A complication is that we will
need a quantitative version of the ergodic theorem, especially to
produce the second set of curves. 

{\bf Step 1: the setup.}
Fix an interval $I$ transverse to the foliation and let $\mu_1,...,\mu_k$ be the ergodic transverse
 measures restricted to $I$, normalized to be probability measures. 
Let $\lambda=\sum \mu_j$; we think
of $\lambda$ as the Lebesgue measure on $I$ with $\lambda(I)=k$. To
apply the ergodic theorem we construct a dynamical system by following
the leaves and keeping track of the direction they cross $I$. Thus we
let $\hat I=I\times\{+,-\}$ and we have the first return map $\hat T:\hat
I\to\hat I$ (essentially an IET). Let $\pi:\hat I\to I$ be the projection, and let
$\hat\mu_i$ be the lift of $\mu_i$, in the sense that
$\pi|I\times\{\pm\}\to I$ pushes forward the restriction of
$\hat\mu_i$ to $\mu_i$. Thus $\hat\lambda=\sum\hat\mu_j$ is a lift of
$\lambda$ and $\hat\lambda(\hat I)=2k$, $\hat\mu_i(\hat I)=2$. Let $\iota:\hat I\to\hat I$
denote the involution $(x,\pm)\mapsto (x,\mp)$. Then each $\hat\mu_i$
is $\iota$-invariant and it can be written as a finite sum of ergodic
measures. These component measures will be permuted by $\iota$ and
there is only one $\iota$-orbit since $\mu_i$ is ergodic. It
follows that $\hat\mu_i$ is either ergodic or it is the sum of two
ergodic measures interchanged by $\iota$ each of which projects to
$\mu_i$. 

{\bf Step 2: quantifying Birkhoff.}
We start by defining a subset $\hat B_i\subset\hat I$ that has a full
$\hat\mu_i$-measure and consists of points generic for
$\hat\mu_i$. When $\hat\mu_i$ is ergodic we let
$$\hat B_i=\left\{\hat x\in\hat I \,\left\vert\,\lim_{n\to\infty} \frac 1n
\sum_{j=1}^n \delta_{T^j\hat x}=\hat \mu_i\right.\right\}$$ where $\delta$ denotes
the point mass and convergence is in weak-$*$ toplogy. By Birkhoff's ergodic theorem 
this is
a $\hat\mu_i$-full measure subset of $\hat I$.

If $\hat\mu_i=\nu+\iota(\nu)$ is the sum of two ergodic measures, we
define $\hat B_i$ as the union of the sets of points generic for $\nu$
and for $\iota(\nu)$. 

Either way,
by replacing $\hat B_i$ with
$\hat B_i\cap \iota(\hat B_i)$ we may assume that it is
$\iota$-invariant, and thus $\hat B_i=\pi^{-1}(B_i)$ for some
$B_i\subset I$. Thus the $B_i$'s are pairwise disjoint and
$\mu_i(B_j)=\delta_{ij}$.

We now refine these sets by quantifying the convergence. Fix a metric
on the space of Borel measures on $\hat I$ with total mass $\leq 2k$
with weak-$*$ topology. Given any $\delta>0$ there is $A_i\subset B_i$
and $N$ such that
\begin{itemize}
\item if $\hat x\in \hat A_i:=\pi^{-1}(A_i)$ then for $n>N$
  $$\frac 1n \sum_{j=1}^n \delta_{T^j\hat x}$$
  is $\delta$-close to $\hat\mu_i/2$ when $\hat\mu_i$ is ergodic, or to
  one of the two ergodic components when it is not.
\item $\lambda(B_i\smallsetminus A_i)$ is arbitrarily small.
\end{itemize}
From now on, $\delta,N,A_i$ will be fixed, with $\delta>0$ very
small.

{\bf Step 3: quantifying Lebesgue density.}
Recall that almost every point of $A_i$ is a point of density for
$A_i$. For $\epsilon>0$ there is a subset $A_i'\subset A_i$ and $r>0$
so that
\begin{itemize}
  \item if $x\in A_i'$ and $J\subset I$ is
    an interval of length $<r$ that contains $x$ then $\lambda(J\cap
    A_i)>(1-\epsilon)\lambda(J)$.
  \item $\lambda(A_i\smallsetminus A_i')$ is arbitrarily small.
\end{itemize}

{\bf Step 4: intervals $J_i$.}
We choose points of density $x_i\in A_i'$ and a small interval $J_i$
around each $x_i$. We require that these intervals are pairwise
disjoint, that $\lambda(J_i\cap A_i')>(1-\eta)\lambda(J_i)$ for a small
$\eta>0$, and that the first $N$ iterations of $\hat
J_i:=\pi^{-1}(J_i)$ by $\hat T$ are pairwise disjoint.

{\bf Step 5: constructing $a_i$'s.}
Let $\hat T_J$ be the first return map of $\hat T$ restricted to $\cup
\hat J_i$, where $\hat J_i=\pi^{-1}(J_i)$. Then there will be $\hat y_i\in
\hat J_i$ so that $\hat y_i,\hat T_J\hat y_i, \hat T_J^2\hat y_i$ are
all in $\hat J_i\cap 
\hat A_i'$. This is because the $\hat \mu_i$-measue of $J_j$ is much
smaller than the $\hat\mu_i$-measure of $J_i\cap A_i'$, for $j\neq
i$. If two consecutive points of these three have the same second
coordinate, i.e. if the intersections with $I$ occur in the same
transverse direction, we form $a_i$ by taking the corresponding leaf
segment and closing it up with a subinterval $H_i\subset
J_i$. Otherwise, we perform the standard surgery where $a_i$ is the
union of two leaf segments and perturbations of two subintervals of
the segment $H_i\subset J_i$ that connects $\pi(\hat y_i)$ and
$\pi(\hat T_J^2\hat y_i)$.
\begin{figure}
\includegraphics[scale=0.7]{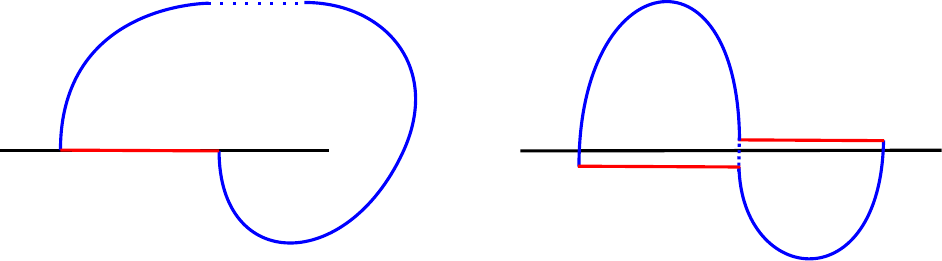}
\caption{Surgering a curve from consecutive intersections.}
\label{surgery_curves}
\end{figure}

{\bf Step 6: constructing $b_i$'s.}
After reordering, we may assume that
$$\mu_1(H_1)\geq \mu_2(H_2)\geq\cdots\geq\mu_k(H_k)>0$$
Since $H_i$ is a small interval with endpoints in $A_i'$ it follows
that the ratio $\frac{\mu_j(H_i)}{\mu_i(H_i\cap A_i)}$ is as small as we like
for $j\neq i$. Because of the reordering we then have that
the ratio
$$\frac{\mu_i(\cup_{j>i} H_j)}{\mu_i(H_i\cap A_i)}$$ is as small as we
like. We can then consider the first return map $\hat T_{H_i}$ on
$\cup_{j\geq i}\hat H_j$ (where $\hat H_j=\pi^{-1}(H_j)$) to find
$\hat z_i\in \hat H_i$ so that $\hat z_i,\hat T_{H_i}\hat z_i, \hat
T_{H_i}^2\hat z_i$ are all in $\hat H_i\cap\hat A_i$. Then construct
$b_i$ by surgery as before.

{\bf Step 7: $a_i$ and $b_i$ approximate $\mu_i$ in $PML$.}  Let
$\gamma$ be any simple closed curve in minimal position with respect
to the foliation $\mathcal{F}$.  We can isotope $\gamma$ to $\gamma'$
where it is a finite union of (horizontal) line segments in $I$ and
(vertical) segments of leaves of $\mathcal{F}$ and so that
$$I(\gamma',\mu_i)=I(\gamma,\mu_i).$$ There exists a uniform constant
$C=C_{\gamma'}$ so that if $n_{a_i}$ is the number of times
that the vertical leaf of our curve $a_i$ crosses the horizontal
segments in $\gamma'$ then
$$|I(a_i,\gamma')-n_{a_i}|\leq C.$$ Indeed, the vertical segments
$\gamma'$ do not intersect the vertical segment(s) of $a_i$ and similarly
for their horizontal segments. The horizontal segment(s) of $a_i$
intersects the vertical segments of $\gamma'$ at most the
number of times the vertical segments of $\gamma'$ intersect $I$. This
same argument shows that
$$|I(b_i,\gamma')-n_{b_i}|\leq C.$$ Applying this argument to a pair
of curves, we see that if $\delta$ is small enough then the ratio of
these curves intersection with $a_i$ can be made as close as we want
to the ratio of their intersections with $\mu_i$ and analogously for
$b_i$.

\bibliographystyle{alpha}
\bibliography{ref-ergodic}

\end{document}